\numberwithin{equation}{section} 
\def\sideremark#1{\ifvmode\leavevmode\fi\vadjust{\vbox to0pt{\vss
 \hbox to 0pt{\hskip\hsize\hskip1em
 \vbox{\hsize3cm\tiny\raggedright\pretolerance10000
 \noindent #1\hfill}\hss}\vbox to8pt{\vfil}\vss}}}%
\theoremstyle{plain} \newtheorem{thm}{Theorem}[section]
\theoremstyle{plain} 
\theoremstyle{plain} \newtheorem{lem}[thm]{Lemma}
\theoremstyle{plain} \newtheorem{prop}[thm]{Proposition}
\theoremstyle{plain} \newtheorem{cor}[thm]{Corollary}
\theoremstyle{definition} \newtheorem{defn}{Definition}
\theoremstyle{plain} \newtheorem{rema}[thm]{Remark}
\newtheorem{exas}[thm]{Examples}
\newcommand{\be}{\begin{equation}}
\newcommand{\ee}{\end{equation}}
\newcommand{\bea}{\begin{eqnarray}}
\newcommand{\eea}{\end{eqnarray}}
\newcommand{\beas}{\begin{eqnarray*}}
\newcommand{\eeas}{\end{eqnarray*}}
\newcommand{\R}{\mathbb{R}}
\newcommand{\C}{\mathbb{C}}
\newcommand{\N}{\mathbb{N}}
\newcommand{\Z}{\mathbb{Z}}
\newcommand{\Zs}{\Z_2} 		
\newcommand{\Zn}{\Zs^n} 
\newcommand{\cO}{{\mathcal{O}}} 
\newcommand{\cJ}{\mathcal{J}}
\newcommand{\cT}{\mathcal{T}}
\newcommand{\cF}{{\mathcal{F}}}
\newcommand{\Ci}{{\mathcal{C}}^{\infty}} 
\newcommand{\op}[1]{\!\!\mathop{\rm ~#1}\nolimits}
\DeclareMathOperator{\id}{id}
\newcommand{\I}{\mathbb{I}}			
\newcommand{\ev}{\textrm{ev}}								
\DeclareMathOperator{\Hom}{Hom}								
\newcommand{\uHom}{\underline{\Hom}} 
\newcommand{\GL}{\mathsf{GL}^{\mathbf{0}}}		
\newcommand{\gl}{\mathsf{gl}}									
\newcommand{\ModA}{\Zn\mathtt{Mod}(A)} 
\newcommand{\ots}{\otimes}
\newcommand{\vect}[1]{{\bf #1}}    
\newcommand{\mathsc}[1]{{\mathscr #1}}   
\newcommand{\mathcs}[1]{{\mathscr #1}}
\newcommand{\p}{\partial}
\newcommand{\zL}{\Lambda}
\begin{document}

\belowdisplayskip=7pt plus 3pt minus 4pt
\belowdisplayshortskip=7pt plus 3pt minus 4pt

\title{Local Forms of Morphisms of Colored Supermanifolds}
\author{Tiffany Covolo, Stephen Kwok \& Norbert Poncin}
\maketitle

\begin{abstract} In \cite{Covolo:2016}, \cite{Covolo:2012} and \cite{Poncin:2016}, we introduced the category of colored supermanifolds ($\Zn$-super\-ma\-ni\-folds or just $\Zn$-manifolds ($\Zn=\Z_2\times\ldots\times\Z_2$ ($n$ times))), explicitly described the corresponding $\Zn$-Berezinian and gave first insights into $\Zn$-integration theory. The present paper contains a detailed account of parts of the $\Zn$-differential calculus and of the $\Zn$-variants of the trilogy of local theorems, which consists of the inverse function theorem, the implicit function theorem and the constant rank theorem. \end{abstract}

\small{\vspace{2mm} \noindent {\bf MSC 2010}: 17A70, 58A50, 13F25, 16L30 \medskip

\noindent{\bf Keywords}: Supersymmetry, supergeometry, superalgebra, higher grading, sign rule, locally ringed space}

\tableofcontents

\newcommand{\z}{\Z_2^n}

\section*{Introduction}
Loosely speaking, colored supermanifolds, also referred to as $\Z_{2}^{n}$-supermanifolds or just $\z$-manifolds ($\z=\Z_2^{\times n}$), are `manifolds' for which the structure sheaf has a $\Z_{2}^{n}$-grading and the commutation rules for the local coordinates come from the standard scalar product of $\z$ (see \cite{Bruce:2019b, Bruce:2018, Bruce:2019, LinManLinAct, Covolo:2016,Covolo:2016a, Covolo:2016c, Covolo:2012, Poncin:2016} for details). This is not just a trivial or straightforward generalization of the notion of a standard supermanifold, as one has to deal with formal coordinates that anticommute with other formal coordinates, but are themselves \emph{not} nilpotent. Due to the presence of formal variables that are not nilpotent, formal power series in the formal parameters are used rather than polynomials.\medskip

The motivation to introduce and study $\Z_2^n$-geometry comes from various sources. First, $\Z_{2}^{n}$-gradings ($n \geq 2$) can be found in  the theory of  parastatistics (see for example \cite{Druhl:1970,Green:1953,Greenberg:1965,Yang:2001}) and in relation to an alternative approach to supersymmetry \cite{Tolstoy:2013}. `Higher graded'  generalizations of the super Schr\"{o}dinger algebra (see \cite{Aizawa:2017}) and the super Poincar\'{e} algebra (see \cite{Bruce:2019a}) have  appeared in the literature.  Furthermore, such gradings appear in the theory of mixed symmetry tensors as found in string theory and some formulations of supergravity (see \cite{Bruce:2018a}).   It must also be pointed out that  quaternions and more general Clifford algebras can  be understood as $\Z_2^n$-graded $\Z_2^n$-commutative algebras (see \cite{Albuquerque:1999, Albuquerque:2002, MO1, MO2}). Finally,  \emph{any} `sign rule' can be interpreted in terms of a $\Z_{2}^{n}$-grading (see \cite{Covolo:2016}). \medskip

The theory of $\Z_2^n$-geometry is currently being developed. Although the available results include the $\z$-Berezinian (see \cite{Covolo:2012}) and low dimensional $\z$-integration theory (see \cite{Poncin:2016}), many foundational questions remain. The present paper deals with parts of the $\z$-differential calculus and contains the trilogy of local theorems, which consists of the inverse function theorem, the implicit function theorem and the constant rank theorem. These $\z$-geometric results are formally similar to their super-geometric counterparts, but their proofs are often subtler. On the other hand, integration on $\z$-manifolds turns out to be fundamentally different from integration on supermanifolds. The novel aspect of integration on $\z$-manifolds is integration with respect to the non-zero degree even parameters.\medskip

It is worth noting that the concept of the functor of points is crucial in $\Z_2^n$-geometry. The functor of points has been used informally in Physics as from the very beginning. It is actually of importance in the contexts where there is no good notion of point as in super- and $\Z_2^n$-geometry and in algebraic geometry. For instance, homotopical algebraic geometry (over differential operators) \cite{TVI, TVII, BPP:KTR, BPP:HAC} is completely based on the functor of points approach. In $\Z_2^n$-geometry, we are particularly interested in functors of $\zL$-points, i.e., functors of points from appropriate locally small categories $\tt C$ to a functor category whose source is not the category ${\tt C}^{\op{op}}$ but the category ${\tt G}$ of $\z$-Grassmann algebras $\zL\,$. However, functors of points that are restricted to the very simple test category ${\tt G}$ are fully faithful only if we replace the target category of the functor category by a subcategory of the category of sets. Examples of categories ${\tt C}$ whose functors of $\zL$-points are fully faithful contain the categories of $\Z_2^n$-manifolds, linear $\Z_2^n$-manifolds, $\Z_2^n$-graded vector spaces (zero degree rules functor), the category of $\Z_2^n$-Lie groups... \cite{Bruce:2019b, LinManLinAct}. In the case of $\Z_2^n$-manifolds, for example, the target category of the functor category is the category of Fr\'echet manifolds over commutative Fr\'echet algebras. \medskip

For various sheaf-theoretical notions we will draw upon Hartshorne \cite[Chapter II]{Hartshorne:1977} and Tennison \cite{Tennison:1975}. There are several good introductory books on the theory of supermanifolds including Bartocci, Bruzzo \&  Hern\'{a}ndez-Ruip\'{e}rez \cite{Bartocci:1991}, Bernstein, Leites, Molotkov \& Shander \cite{Bernstein:2013}, Carmeli, Caston \& Fioresi \cite{Carmeli:2011}, Deligne \& Morgan \cite{Deligne:1999}, Leites \cite{Leites:1980} and Varadarajan \cite{Varadarajan:2004}. For categorical notions we will be based on Mac Lane \cite{MacLane1998}.


\section{Preliminaries}\label{sec:prelim}

In this section, we will fix the notation used throughout the article and recall some basic definitions and results. For further details we refer the reader to the first two articles of this series on $\Zn$-graded geometry \cite{Covolo:2016}, \cite{Covolo:2016a}, as well as previous (resp., follow up) papers of the authors on $\Zn$-graded algebra \cite{Covolo:2012},\cite{CM} (resp., $\Zn$-integration \cite{Poncin:2016}), and references therein.

\subsection{$\Zn$-superalgebra}
In the sequel, $\mathbb{K}$ always denotes a field of characteristic $0$. In our notation, $\Zn=\Zs\times\cdots\times\Zs$ ($n$-times).\medskip

A \emph{$\Zn$-graded algebra} $A$ (over $\mathbb{K}$), is a $\mathbb{K}$-algebra of the form $A=\oplus_{\gamma \in\Zn} A^{\gamma}$ (decomposition into $\mathbb{K}$-vector spaces), in which the multiplication respects the $\Zn$-degree, i.e., $A^{\alpha}\cdot A^{\beta} \subset A^{\alpha+\beta}$. In the following, we will always assume algebras to be associative and unital ($1\in A^{0}$).
If in addition, for any pair of homogeneous elements $a\in A^{\alpha}$ and $b\in A^{\beta}$,
\be\label{signrule}
ab=(-1)^{\langle \alpha,\beta \rangle}\,ba \;,
\ee
where $\langle \;,\; \rangle$ denotes the usual scalar product, then the algebra $A$ is said to be \emph{$\Zn$-commutative}.

\begin{exas} Supercommutative algebras are the simplest examples $(\,$$n=1$$\,)$. As shown in \cite{MO1, MO2}, the quaternion algebra $\mathbb{H}$ $(\,$and more generally any Clifford algebra $\op{C}\!\ell_k$$\,)$ can be seen as a $\Zn$-commutative algebra for $n=3$ $(\,$respectively, $n=k+1$$\,)$. For this, it suffices to associate appropriate degrees to the generators, e.g.,
	$$ \deg(\mathsf{i})= (0,1,1)\;, \qquad \deg(\mathsf{j})= (1,0,1) \qquad (\mbox{ and thus } \deg(\mathsf{k})= (1,1,0)\;).$$
\end{exas}

A $\Zn$-commutative algebra $A$ has of course an \emph{underlying parity}, given by
$$ \Zn\ni \gamma=(\gamma_1, \ldots,\gamma_n)\; \mapsto\; \bar{\gamma}:=\sum_{k=1}^n \gamma_k \in\Zs \;.$$
In other words, degrees $\gamma$ in $\Zn$ are divided into \emph{even} ($\bar{\gamma}=\bar{0}$) and \emph{odd} ($\bar{\gamma}=\bar{1}$), which induces the analogous subdivision of the
homogeneous elements of $A$ (labeled in the same way as \emph{even} or \emph{odd}).\medskip 

Note that, following the generalised sign rule \eqref{signrule}, every odd-degree element of $A$ is nilpotent, as it is familiar in supergeometry. However, the higher $\Zn$-case ($n\ge 2$), is essentially different from the super case, as nonzero degree elements are not necessarily nilpotent -- more precisely, all even nonzero degree elements are not nilpotent.\medskip

Analogously to the definition of $\Zn$-commutative algebras, other notions of linear algebra are straightforwardly inferred.
In this way, $\Zn$-graded modules over a $\Zn$-commutative algebra $A$ and degree-preserving $A$-linear maps between them form an abelian category $\ModA$,
which naturally admits a \emph{symmetric monoidal structure}  $\ots_A$ (see, e.g., \cite[Section 2.1]{BPP:HAC}),
with braiding given by
$$
\begin{array}{rccc}
c_{VW}^{\op{gr}}: &V \otimes_A W &\to& W \otimes_A V\\
&v \otimes w &\mapsto& (-1)^{\langle \deg(v),\deg(w)\rangle} w \otimes v\;,
\end{array}
$$
for homogeneous elements $v$ and $w$.
This structure is also \emph{closed}, as for every $W\in\ModA$, the functor $-\ots_A W: \ModA \to \ModA $ has a right-adjoint $\uHom_A(W,-): \ModA \to \ModA$, i.e., for any graded $A$-modules $V,U$, there is a natural isomorphism
$$
\Hom_A(V\ots_A W, U)\simeq \Hom_A(V,\uHom_A(W,U))\;.
$$
As one can readily verify, the \emph{internal hom} $\uHom_A(V,W)$ is the graded $A$-module which consists of all $A$-linear maps
$  \ell: V \to W \;.$
These may shift the $\Zn$-degree of the elements by a fixed $\gamma\in \Zn$, i.e.,
$$
\ell(V^{\alpha})\subset W^{\alpha+\gamma}\;,
$$
for all $\alpha\in\Zn$. The latter constitute the $\gamma$-part $\uHom^{\gamma}_A(V,W)$ of $\uHom_A(V,W)$.
Hence, contrary to the case of modules over a classical commutative algebra, the internal hom $\uHom_A$ differs from the categorical hom $\Hom_A$, since this latter contains only $0$-degree $A$-linear maps. In other words, $\Hom_A(V,W)=\uHom^{0}_A(V,W)$.

\subsection{$\Zn$-supermanifolds and morphisms}

The basic objects of our study are smooth $\Zn$-supermanifolds. Aside from the usual commuting coordinates (denoted in the following with the letter $x$), they present also different ``types'' of ``formal'' coordinates $\xi$ (corresponding to the different nonzero degrees in $\Zn$) which may commute or anticommute, following the generalized sign rule \eqref{signrule}. It is important to note that, contrary to superspaces (case $n=1$), even coordinates may anticommute, odd coordinates may commute, and, as already mentioned, even nonzero degree coordinates are not nilpotent.\medskip

To keep track of these differences in a local coordinate system of a $\Zn$-supermanifold, we have to introduce some more notation.\medskip

Using the {underlying parity} we fix a \emph{standard order} of the elements of $\Zn$: first the even degrees ordered lexicographically, then the odd ones also ordered lexicographically. For example,
$$ \Zs^2=\{(0,0), (1,1), (0,1),(1,0)\}\;. $$
So, when we will refer to $\gamma_j\in\Zn\setminus\{0\}$, the \emph{$j$-th nonzero degree of $\Zn$}, we will always mean with respect to this standard order. We may thus write $\xi_{\gamma_j}$ to specify that the considered formal coordinates are of degree $\gamma_j\in\Zn\setminus\{0\}$.
Then, a tuple $\vect{q}=(q_1,\ldots,q_N)\in\R^N$ (where $N:=2^{n}-1$) provides all the information on the $\Zn$-graded variables $\xi$: there is a total of $|\vect{q}|:=\sum_{k=1}^N q_i$ graded variables $\xi^a$, among which $q_i$ of degree $\gamma_{i}\in\Zn\setminus\{0\}$, denoted by  $\xi_{\gamma_i}^{a_i}$ ($1\leq a_i\leq q_i$, $1\leq i\leq N$). For simplicity, the variables $\xi$ are also considered to be  ordered by degree. Hence, throughout this article, a system of coordinates of a $\Zn$-superspace will be denoted in different ways, depending on the level of distinction needed: either $u$ (no distinction between coordinates), $(x,\xi)$ (considering only the zero/nonzero degree subdivision) or $(x, \xi_{\gamma_j})$ (considering the full $\Zn$-degree subdivision).\medskip

We are now ready to recall the definition of a $\Zn$-supermanifold.
\begin{defn}
A \emph{locally $\Zn$-ringed space} is a pair $(M,\cO_M)$ of a topological space $M$ and a sheaf of $\Zn$-commutative $\R$-algebras over it, such that at every point $m\in M$ the stalk $\cO_{M,m}$ is a local graded ring.\smallskip

A (smooth) \emph{$\Zn$-supermanifold} of dimension $p|\vect{q}$ is a \emph{locally $\Zn$-ringed space} $(M,\cO_M)$ which is locally isomorphic to a $\Zn$-superdomain $(\R^p,\Ci_{\R^p}[[\xi]])$. Local sections of this latter are \emph{formal power series} in the $\Zn$-graded variables $\xi$ and smooth coefficients
$$
 \Ci(U)[[\xi]]:=\left\{ \sum_{\alpha\in\N^{N}}^{\infty} f_{\alpha}(x)\xi^{\alpha}\; |\; f_{\alpha}\in\Ci(U)\right\} \;.
$$

\emph{Morphisms} between $\Zn$-supermanifolds are simply morphisms of $\Zn$-ringed spaces, i.e., pairs
$(\phi,\phi^*):(M,\cO_M)\to (N,\cO_N)$ of a continuous map $\phi:M\to N$ and a sheaf morphism $\phi^*:\cO_N\to\phi_*\cO_M$, i.e., a family of algebra morphisms which are compatible with restrictions and are defined for any open $V\subset N$ by $$\phi^*_V:\cO_N(V)\to \cO_M(\phi^{-1}(V))\;.$$

We denote the category of $\Zn$-supermanifolds and morphisms between them by $\Zn{\tt Man}$.
\end{defn}

\begin{rema}
{\bf In the following, we will use $M$ both to denote the $\Zn$-supermanifold $(M,\cO_M)$ and its base space, preferring the classical notation $|M|$ for this latter when confusion can arise. And analogously we denote by $\phi$ the morphism $(\phi,\phi^*)$ and its base morphism, writing $|\phi|$ for the latter whenever necessary. Moreover, when considering sheaves, like, e.g., $\cO_M$, we omit the underlying topological space $M$, if this space is clear from the context.}\end{rema}

\begin{rema} Let us stress that the base $M$ corresponds to the degree zero coordinates (and not to the even degree coordinates), and let us mention that it can be proven that the topological base space $M$ carries a natural smooth manifold structure of dimension $p$, that the continuous base map $\phi:M\to N$ is in fact smooth, and that the morphisms $$\phi^*_m:\cO_{\phi(m)}\to\cO_{m},\;m\in M$$ between stalks induced by a morphism $\phi:M\to N$ of $\Zn$-supermanifolds respect the unique homogeneous maximal ideals of the local graded rings $\cO_{\phi(m)}$ and $\cO_{m}$.
\end{rema}

\subsection{$\cJ$-adic topology and Hausdorff completeness}
Let $I$ be a homogeneous ideal of a $\Zn$-graded (unital) ring $R$, $K$ an $R$-module. The collection of sets $\{x + I^kK \}_{k=0}^\infty$, where $x$ runs over all elements of $K$, is readily seen to be a basis for a topology on $K$, called the {\it $I$-adic topology}. It follows immediately from the definition that the $I$-adic topology is translation-invariant with respect to the additive group structure of $K$.\medskip

The following lemma is standard but we give its proof for completeness.

\begin{lem}\label{Iadiccont}
Let $I$ be a homogeneous ideal of $R$, and let $f: K \to L$ be an $R$-module morphism. Then $f$ is $I$-adically continuous.
\end{lem}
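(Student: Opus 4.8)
The statement to prove is that an $R$-module morphism $f\colon K \to L$ is automatically continuous when $K$ and $L$ are equipped with their respective $I$-adic topologies, for a fixed homogeneous ideal $I \subset R$. Since both topologies are translation-invariant (as noted in the text just before the lemma), it suffices to check continuity at the single point $0 \in K$; indeed, continuity at an arbitrary $x$ then follows by composing with the translations $k \mapsto k - x$ on $K$ and $\ell \mapsto \ell + f(x)$ on $L$, which are homeomorphisms, together with $R$-linearity of $f$ giving $f(x + y) = f(x) + f(y)$.

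The core computation is then immediate from linearity. A basic open neighborhood of $0$ in $L$ is $I^k L$ for some $k \geq 0$. I claim $f(I^k K) \subseteq I^k L$, so that $I^k K$ is an open neighborhood of $0$ in $K$ mapped into $I^k L$; this is exactly what continuity at $0$ requires. To see the claim, note that a general element of $I^k K$ is a finite sum $\sum_j r_j m_j$ with each $r_j \in I^k$ and $m_j \in K$ — here one uses that $I^k$ is the ideal generated by products $i_1 \cdots i_k$ with $i_s \in I$, so $I^k K$ is spanned over $R$ (indeed over $I^k$) by such elements — and then $f\bigl(\sum_j r_j m_j\bigr) = \sum_j r_j f(m_j) \in I^k L$ since each $r_j \in I^k$ and $f(m_j) \in L$. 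That $f$ respects the $R$-action is precisely the hypothesis that $f$ is an $R$-module morphism; no use of the grading or of $\Zn$-commutativity is needed beyond the fact that $I$ being homogeneous guarantees the $I$-adic topology is well-defined in this graded setting.

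There is essentially no obstacle here: the lemma is soft and the proof is a two-line argument once the reduction to the point $0$ via translation-invariance is made explicit. The only thing to be slightly careful about is the logical shape of the continuity statement — one wants, for every $x \in K$ and every basic open $f(x) + I^k L$ around $f(x)$, a basic open $x + I^\ell K$ with $f(x + I^\ell K) \subseteq f(x) + I^k L$ — and here $\ell = k$ works, using $f(x + I^k K) = f(x) + f(I^k K) \subseteq f(x) + I^k L$. I would write the proof in exactly that order: first recall translation-invariance and reduce to $0$, then verify $f(I^k K) \subseteq I^k L$ by expanding a general element of $I^k K$ and applying $R$-linearity.
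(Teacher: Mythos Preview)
Your proof is correct and rests on the same core observation as the paper's, namely that $f(I^kK)\subseteq I^kL$ by $R$-linearity. The only cosmetic difference is packaging: you reduce via translation-invariance to continuity at $0$, whereas the paper directly argues that $f^{-1}(y+I^kL)$ is open by writing it as the union $\bigcup_{x\in f^{-1}(y+I^kL)}(x+I^kK)$ of basic open sets.
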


\begin{proof}
Let $y$ be any element of $L$. Since the sets $y + I^kL$ constitute a basis for the $I$-adic topology of $L$, it suffices to prove that $f^{-1}(y + I^kL)$ is $I$-adically open in $K$ for all $k$. As $f$ is an $R$-morphism, $f(I^kK) \subset I^kL$. Thus $f^{-1}(y + I^kL)$ is the union of the open sets $x + I^kK$ where $x$ runs over all elements of $f^{-1}(y + I^kL)$, hence it is open.
\end{proof}

The $I$-adic topology on $R$ makes $R$ into a {\it topological ring}, i.e., a ring such that the addition and multiplication maps $R \times R \to R$ are continuous, when $R \times R$ is endowed with the product topology. We will check the continuity of multiplication; continuity of addition is similar. Denote the multiplication map by $\mu$, and let $y$ be an element of $R$; then

\begin{align*}
\mu^{-1}(y + I^k) = \bigcup_{(a, b) \in \mu^{-1}(y + I^k)} \left(a + I^k \right) \times \left(b+ I^k \right).
\end{align*}
The sets $(a + I^k) \times (b + I^k)$ are open in the product topology on $R \times R$, hence $\mu^{-1}(y + I^k)$ is open, proving continuity of $\mu$.

\begin{defn}
Let $I$ be a homogeneous ideal of $R$. The ring $R$ is {\it Hausdorff complete} in the $I$-adic topology if the canonical ring morphism $p: R \;\to\; \varprojlim\nolimits_{k\in\N} R/I^k\;$ is an isomorphism.
\end{defn}

We next deal with convergence in the $I$-adic topology.

\begin{defn}
Let $I$ be a homogeneous ideal of $R$, and let $a_i$ be a sequence of elements of $R$. The sequence $a_i$ is an {\it $I$-adic Cauchy sequence} if for each non-negative integer $n$ there exists $l$ such that $a_j - a_k \in I^n$ for all $j, k \geq l$. The sequence $a_i$ {\it converges $I$-adically} if there exists $a \in R$ such that for each non-negative integer $n$, there exists $l$ such that $a_i - a \in I^n$ for all $i \geq l$.
\end{defn}

Evidently these definitions may be extended to the $I$-adic topology on an $R$-module $M$ in a natural way. The following proposition is standard.

\begin{prop}
Let $R$ be a $\Zn$-graded ring and $I$ a homogeneous ideal. Suppose $R$ is $I$-adically Hausdorff complete. Then a sequence $a_i$ is $I$-adically Cauchy if and only if it converges $I$-adically to a unique limit in $R$.
\end{prop}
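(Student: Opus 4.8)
The plan is to prove the two implications separately, using that the canonical map $p\colon R\to\varprojlim_k R/I^k$ is an isomorphism. For the easy direction, suppose $a_i$ converges $I$-adically to some $a\in R$. Given $n$, pick $l$ with $a_i-a\in I^n$ for all $i\geq l$; then for $j,k\geq l$ we have $a_j-a_k=(a_j-a)-(a_k-a)\in I^n$, so the sequence is Cauchy. Uniqueness of the limit also follows from Hausdorffness: if $a_i\to a$ and $a_i\to a'$, then $a-a'\in I^n$ for every $n$, i.e.\ $a-a'\in\bigcap_n I^n$; since $p$ is injective and $\ker p=\bigcap_n I^n$, we get $a=a'$. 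So uniqueness is essentially a restatement of injectivity of $p$.

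For the substantive direction, suppose $a_i$ is $I$-adically Cauchy. For each $k$, let $\pi_k\colon R\to R/I^k$ be the quotient map. The Cauchy condition says precisely that for each $k$ the sequence $\pi_k(a_i)\in R/I^k$ is eventually constant, say equal to $\bar a^{(k)}$ for $i\geq l_k$ (take $l_k$ from the Cauchy condition applied to $n=k$: then $a_j-a_k\in I^k$ for $j,k\geq l_k$, so $\pi_k(a_j)=\pi_k(a_k)$). One checks the compatibility: the transition map $R/I^{k+1}\to R/I^k$ sends $\bar a^{(k+1)}$ to $\bar a^{(k)}$, because for $i$ large enough to stabilize both coordinates, $\bar a^{(k+1)}=\pi_{k+1}(a_i)$ maps to $\pi_k(a_i)=\bar a^{(k)}$. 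Hence $(\bar a^{(k)})_k$ is a well-defined element of $\varprojlim_k R/I^k$. Since $p$ is surjective (indeed an isomorphism), there is $a\in R$ with $\pi_k(a)=\bar a^{(k)}$ for all $k$. Finally, $a$ is the $I$-adic limit: given $n$, for all $i\geq l_n$ we have $\pi_n(a_i)=\bar a^{(n)}=\pi_n(a)$, i.e.\ $a_i-a\in I^n$. Uniqueness again follows from injectivity of $p$ as above.

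I do not expect any serious obstacle here; the proposition is a standard bookkeeping argument. The one point requiring a little care is the verification that the stabilized residues $\bar a^{(k)}$ are compatible under the transition maps of the inverse system — one must choose an index $i$ large enough to stabilize the coordinate at level $k$ \emph{and} at level $k+1$ simultaneously (any $i\geq\max(l_k,l_{k+1})$ works, and in fact $l_{k+1}\geq l_k$ can be arranged), and then compatibility is immediate from functoriality of the quotient maps. The grading plays no role beyond ensuring $I^k$ are homogeneous ideals so that everything stays within the graded category; the argument is identical to the classical commutative-algebra statement.
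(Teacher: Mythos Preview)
Your proof is correct and follows essentially the same approach as the paper: build from the Cauchy sequence a compatible family of residues in $R/I^k$, invoke the isomorphism $p$ to obtain the limit in $R$, and deduce uniqueness from $\ker p=\bigcap_k I^k=0$. You give a bit more detail on the compatibility check and the easy direction, but the argument is the same.
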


\begin{proof} The fact that a convergent sequence is Cauchy is straightforward, so we prove the converse. If $a_i$ is a Cauchy sequence, it yields a well-defined element $[a]_k$ in $R/I^k$ for each $k$. Indeed, since $a_i \equiv a_j \text{ mod } I^k$ for $i, j$ sufficiently large, we define $[a]_k$ to be the equivalence class of $a_i$ for $i$ large enough. Noting that $$\varprojlim\nolimits_{k\in\N} R/I^k = \{(b_0, b_1, b_2, \dotsc ) \in \prod_{k \geq 0} R/I^k : b_k = f_{kl}(b_l), k \leq l\}\;, \text{where}\; f_{kl}: R/I^l\ni r+I^l \mapsto r+I^k\in R/I^k\;,$$ the $[a]_k$ define a unique element $a'$ of $\varprojlim\nolimits_{k\in\N} R/I^k$. Since $p$ is an isomorphism, this element can be identified with the corresponding element $a$ of $R$. It is then directly verified that $a$ is a limit of the sequence $a_i$. To prove uniqueness of the limit, suppose $a_i$ converges to $\widetilde{a}$. Then $a - \widetilde{a} = (a-a_i) + (a_i - \widetilde{a})$ lies in $I^k$ for all $k$, since $a_i$ converges to $a$ and to $\tilde a$. But since $p$ is an isomorphism, its kernel $\cap_{k =0}^\infty I^k$ vanishes, whence $a = \widetilde{a}$.
\end{proof}

A similar proposition is true for $I$-adically Hausdorff complete $R$-modules.\\

Canonically associated to any $\Zn$-graded algebra $R$ is the homogeneous ideal $J$ of $R$ generated by all homogeneous elements of $R$ having nonzero $\Zn$-degree. If $f: R \to S$ is a morphism of $\Zn$-graded algebras, then $f(J_R) \subset J_S$. The $J$-adic topology plays a fundamental role in $\Zn$-supergeometry.\medskip

Indeed, the preceding notions can be sheafified. For a $\Zn$-supermanifold $M$, we have an ideal sheaf $\cJ$, defined by $$\cJ(U) = \langle f \in \cO(U) \,:\, f \text{ is of nonzero $\Zn$-degree} \rangle\;.$$ The ideal sheaf $\cJ$ defines a $\cJ$-adic topology on the structure sheaf $\cO$ of $M$. If $\cF$ is a sheaf of $\cO$-modules, there is an analogous $\cJ$-adic topology on $\cF$. Throughout this paper, all statements about sheaves of topological $\cO$-modules (e.g., saying a sheaf is Hausdorff complete) will refer to this $\cJ$-adic topology.\medskip

As shown in \cite{Covolo:2016, Covolo:2016a}, many basic results valid for smooth $\Zs$-supermanifolds also hold for $\Zn$-supermanifolds. For instance, the underlying (base, or reduced) space $M$ of a $\Zn$-supermanifold admits a structure of smooth manifold $\Ci_M$, and there is a projection $\varepsilon:\cO_M\to\Ci_M$ of sheaves such that $\cJ =\ker\varepsilon$.\medskip

The obstacle, in the higher $\Zn$-case, represented by the loss of the nilpotency of $\cJ$ (a fundamental fact in supergeometry), is compensated by the Hausdorff completeness of the $\cJ$-adic topology on $\cO_M$:

\begin{prop}[Proposition 6.9 in \cite{Covolo:2016}]
Let $M$ be a $\mathbb{Z}^n_2$-supermanifold. Then $\cO_M$ is \emph{$\cJ$-adically Hausdorff complete} as a sheaf of $\mathbb{Z}^n_2$-commutative algebras, i.e., the canonical sheaf morphism
$$p: \cO_M \;\rightarrow\; \varprojlim\nolimits_{k\in\N} \cO_M/\cJ^k\;$$
is an isomorphism.
\end{prop}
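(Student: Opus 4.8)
The plan is to reduce the statement to the local model and there make the canonical morphism $p$ completely explicit. Being an isomorphism of sheaves is a local condition, and $M$ admits an open cover by charts $(U,\cO_M|_U)$ isomorphic to $\Zn$-superdomains; since $\cJ$ is defined intrinsically and is respected by morphisms of $\Zn$-graded algebras (so it, the quotients $\cO_M/\cJ^k$, and $p$ all transport along such isomorphisms and restrict to opens), it suffices to prove that $p_V$ is bijective for every open $V$ contained in the base of a superdomain $(\Om,\Ci_\Om[[\xi]])$. Note that a morphism of sheaves which is bijective over every open is automatically an isomorphism, so no stalkwise argument will be needed.

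First I would identify $\cJ^k$ explicitly on the model. On an open $V$ one has $\cO_M(V)=\Ci(V)[[\xi]]$, and $\cJ(V)$ is exactly the set of power series $\sum_{\alpha\in\N^N}f_\alpha\,\xi^\alpha$ with vanishing constant term $f_0=0$ (group the monomials of nonzero degree by their $\Zn$-degree: there are only $2^n-1$ such degrees, each homogeneous piece lies in $\cJ(V)$, and the sum is finite). An induction on $k$ — writing $f=\sum_{a}\xi^a g_a$, where a single factor $\xi^a$ is peeled off the monomials for which it ``comes first'' and the possibly infinite remainder $g_a$ is absorbed into $\Ci(V)[[\xi]]$, then applying the inductive hypothesis to each $g_a$ — gives
$$\cJ(V)^k=\Big\{\,\textstyle\sum_{\alpha\in\N^N}f_\alpha\,\xi^\alpha\ :\ f_\alpha=0 \ \text{ whenever }\ |\alpha|<k\,\Big\}\;.$$
Since only finitely many multi-indices satisfy $|\alpha|<k$, the quotient $\cO_M(V)/\cJ(V)^k\cong\bigoplus_{|\alpha|<k}\Ci(V)\,\xi^\alpha$ is a finite free $\Ci_V$-module, cut out inside $\cO_M$ by the vanishing of finitely many of the coefficient functions; hence the presheaves $V\mapsto\cJ(V)^k$ and $V\mapsto\cO_M(V)/\cJ(V)^k$ are already sheaves, equal to $\cJ^k$ and $\cO_M/\cJ^k$ respectively.

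Then I would compute the inverse limit. The embedding of sheaves into presheaves preserves limits, so $(\varprojlim_k\cO_M/\cJ^k)(V)=\varprojlim_k\big(\cO_M(V)/\cJ(V)^k\big)$, with transition maps the length-truncations $\bigoplus_{|\alpha|<l}\Ci(V)\xi^\alpha\to\bigoplus_{|\alpha|<k}\Ci(V)\xi^\alpha$ for $k\le l$. A compatible family in this inverse limit is precisely a consistent choice of all coefficient functions $f_\alpha\in\Ci(V)$, $\alpha\in\N^N$, i.e.\ a single formal power series in $\Ci(V)[[\xi]]=\cO_M(V)$; conversely each power series yields such a family by truncation. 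Under this identification $p_V$ becomes the identity map, so $p_V$ is bijective for every $V$, and therefore $p$ is an isomorphism of sheaves on the model, hence on $M$.

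The one step that really needs care is the middle one: verifying that $\cJ(V)^k$ is nothing but the space of power series all of whose coefficients of total length $<k$ vanish. The inclusion ``$\subseteq$'' is immediate, since a product of $k$ elements of $\cJ(V)$ has every monomial of length at least $k$; the reverse inclusion is the inductive peeling argument above, which terminates because there are only finitely many $\xi$-generators. This is also precisely where the higher $\Zn$-phenomenon intervenes: unlike the super case $\cJ$ is not nilpotent, so $\varprojlim_k$ is a genuinely infinite limit — what rescues the argument is that each fixed truncation $\cO_M/\cJ^k$ is still finite free over $\Ci_M$, because only finitely many monomials have length below a prescribed bound.
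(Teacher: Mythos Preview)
The paper does not give its own proof of this proposition: it is quoted verbatim as Proposition~6.9 of \cite{Covolo:2016} and used as a black box. There is therefore nothing in the present paper to compare your argument against.

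Your argument is correct and is essentially the standard one. The key computation --- that on a $\Zn$-superdomain one has $\cJ(V)^k=\{\sum_\alpha f_\alpha\xi^\alpha:\ f_\alpha=0\text{ for }|\alpha|<k\}$ --- is exactly the point, and your ``peeling'' induction handles the nontrivial inclusion cleanly (the finiteness of the set of $\xi$-generators is what makes the sum $\sum_a\xi^a g_a$ finite, hence a legitimate element of the ideal power). Once that is established, the identification $\cO_M(V)/\cJ(V)^k\cong\bigoplus_{|\alpha|<k}\Ci(V)\,\xi^\alpha$ and the computation of the inverse limit are routine. Two small remarks: (i) you implicitly use that for $V$ inside a chart the presheaf $V\mapsto\cJ(V)^k$ and the presheaf quotient $V\mapsto\cO_M(V)/\cJ(V)^k$ are already sheaves, so that they coincide with (the restrictions of) $\cJ^k$ and $\cO_M/\cJ^k$ --- you do say this, and it follows from your explicit description, but it is worth flagging since on a general open of $M$ it could fail; (ii) strictly speaking you only verify bijectivity of $p_V$ for $V$ ranging over a basis, not over all opens, so the cleanest way to conclude is via stalks rather than via the sentence ``bijective over every open is automatically an isomorphism''. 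Neither point is a gap.
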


\subsection{Functor of points}

Similar to what happens in classical $\Zs$-supergeometry, a $\Zn$-supermanifold $M$ is not fully described by its topological points in $|M|$. To remedy this defect, we broaden the notion of ``point", as was suggested by Grothendieck in the context of algebraic geometry.\medskip

More precisely, set $V=\{z\in\C^n:P(z)=0\}\in{\tt Aff}$, where $P$ denotes a polynomial in $n$ indeterminates with complex coefficients and $\tt Aff$ denotes the category of affine varieties. Grothendieck insisted on solving the equation $P(z)=0$ not only in $\C^n$, but in $A^n$, for any algebra $A$ in the category $\tt CA$ of commutative (associative unital) algebras (over $\C$). This leads to an arrow $$\op{Sol}_P:{\tt CA}\ni A\mapsto\op{Sol}_P(A)=\{a\in A^n:P(a)=0\}\in{\tt Set}\;,$$ which turns out to be a functor $$\op{Sol}_P\simeq \op{Hom}_{\tt CA}(\C[V],-)\in[{\tt CA},{\tt Set}]\;,$$ where $\C[V]$ is the algebra of polynomial functions of $V$. The dual of this functor, whose value $\op{Sol}_P(A)$ is the set of $A$-points of $V$, is the functor $$\Hom_{\tt Aff}(-,V)\in[{\tt Aff}^{\op{op}},{\tt Set}]\;,$$ whose value $\Hom_{\tt Aff}(W,V)$ is the set of $W$-points of $V$.\medskip

The latter functor can be considered not only in $\tt Aff$, but in any locally small category, in particular in $\Zn{\tt Man}$. We thus obtain a covariant functor (functor in $\bullet$) $$\underline{\bullet}=\Hom(-,\bullet): \Zn{\tt Man}\ni M\mapsto \underline{M}=\op{Hom}_{\Zn\tt Man}(-,M)\in [{\tt \Zn{\tt Man}^{\op{op}}, Set}]\;.$$ As suggested above, the contravariant functor $\Hom(-,M)$ (we omit the subscript $\Zn{\tt Man}$) (functor in $-$) is referred to as the {\it functor of points} of $M$. If $S\in\Zn{\tt Man}$, an {\it $S$-point} of $M$ is just a morphism $\pi_S\in\op{Hom}(S,M)$. One may regard an $S$-point of $M$ as a `family of points of $M$ parameterized by the points of $S$'. The functor $\underline{\bullet}$ is known as the {\it Yoneda embedding}. For any underlying locally small category $\tt C$ (here ${\tt C}=\Zn{\tt Man}$), the functor $\underline{\bullet}$ is fully faithful, what means that, for any $M,N\in\Zn{\tt Man}$, the map $$\underline{\bullet}_{M,N}:\Hom(M,N)\ni \phi\mapsto \Hom(-,\phi)\in\op{Nat}(\Hom(-,M),\Hom(-,N))$$ is bijective (here $\op{Nat}$ denotes the set of natural transformations). It can be checked that the correspondence $\underline{\bullet}_{M,N}$ is natural in $M$ and in $N$. Moreover, any fully faithful functor is automatically injective up to isomorphism on objects: $\underline{M}\simeq\underline{N}$ implies $M\simeq N$. Of course, the functor $\underline{\bullet}$ is not surjective up to isomorphism on objects, i.e., not every functor $X\in[\Zn{\tt Man}^{\op{op}},{\tt Set}]$ is isomorphic to a functor of the type $\underline{M}$. However, if such $M$ does exist, it is, due to the mentioned injectivity, unique up to isomorphism and it is called {\bf `the' representing $\Zn$-supermanifold} of $X$. Further, if $X,Y\in[\Zn{\tt Man}^{\op{op}},{\tt Set}]$ are two representable functors, represented by $M,N$ respectively, a morphism or natural transformation between them, provides, due to the mentioned bijectivity, a {\bf unique morphism between the representing $\Zn$-supermanifolds} $M$ and $N$. It follows that, instead of studying the category $\Zn{\tt Man}$, we can just as well focus on the functor category $[{\tt \Zn{\tt Man}^{\op{op}}, Set}]$ (which has better properties, in particular it has all limits and colimits).

\section{Tangent sheaf, tangent space, and tangent map}\label{sec:tg}
\subsection{Tangent sheaf}
The following definition is standard:

\begin{defn}
Let $f \in \cO(U)$. We denote by $W_f$ the open subset of $U$ made of all points $m \in U$ such that the restriction $f|_V = 0$ for some neighborhood $V$ of $m$. The {\it support} of $f$ is the closed subset of $U$ defined by $\op{supp}(f) := U \backslash W_f$.
\end{defn}

Let now $U$ be an open subset of a $\Zn$-supermanifold $M$. We consider the set $\op{Der}_\R(\cO(U))$ of $\mathbb{Z}^n_2$-graded $\R$-linear derivations of $\cO(U)$, i.e., of $\R$-linear maps $D: \cO(U) \to \cO(U)$, of all degrees $\deg(D)\in\Zn$, satisfying the graded Leibniz rule:
\[
D(ab) = Da \cdot b + (-1)^{\langle \deg(D), \deg(a) \rangle} a \cdot Db\;.
\]\smallskip
Then $\op{Der}_\R(\cO(U)$) is a $\Zn$-graded $\cO(U)$-module, as may readily be verified.\medskip

We now show that derivations can be localized.\medskip

The existence of partitions of unity on $\Zn$-supermanifolds \cite[Section 7.4]{Covolo:2016} implies, as usual, the existence of {\it bump functions} on $\Zn$-supermanifolds. This means that, for a $\Zn$-supermanifold $M$, a point $m\in M$, and an open neighborhood $V$ of $m$, there is a function $\varphi\in\cO^0(M)$ such that $\op{supp}(\varphi)\subset V$ and $\varphi=1$ on an open neighborhood of $m$. The proof of this statement is standard.

\begin{lem}
Let $M$ be a $\mathbb{Z}^n_2$-supermanifold, $D: \cO(M) \to \cO(M)$ a global derivation, and $U$ an open subset of $M$. Then there exists a unique derivation $D|_U: \cO(U) \to \cO(U)$ such that $D|_U(g|_U)=(Dg)|_U$ for any $g\in\cO(M)$. This derivation $D|_U$ has the same degree as $D$.
\end{lem}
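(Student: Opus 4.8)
The plan is to construct $D|_U$ by the classical bump-function argument; the only genuinely delicate preliminary is the \emph{locality} of derivations, which I would establish first and without circularity. First, any derivation kills $1$ (from $D(1)=D(1\cdot1)=2D(1)$), so $D(1)=0$. Next I would show that if $g\in\cO(M)$ vanishes on an open set $V$, then $Dg$ vanishes on $V$: fix $m\in V$ and, using the existence of bump functions on $M$, choose $\psi\in\cO^0(M)$ with $\op{supp}(\psi)\subset V$ and $\psi=1$ on an open neighbourhood $W\ni m$ (so $W\subset\op{supp}(\psi)\subset V$). Since $\psi g$ vanishes on $V$ and on $M\setminus\op{supp}(\psi)$, it vanishes on all of $M$, whence $g=(1-\psi)g$; applying $D$ gives $Dg=-(D\psi)\,g+(1-\psi)(Dg)$ (the sign is $+$ because $1-\psi$ has degree $0$), and restricting to $W$ both summands vanish since $g|_W=0$ and $(1-\psi)|_W=0$. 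As $m\in V$ was arbitrary, $Dg|_V=0$. The identical argument, using bump functions on the $\Zn$-supermanifold $U$, shows that every derivation of $\cO(U)$ is local in this sense.

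I would then construct $D|_U$ as follows. For $f\in\cO(U)$ and $m\in U$, pick $\varphi\in\cO^0(M)$ with $\op{supp}(\varphi)\subset U$ and $\varphi=1$ on an open neighbourhood $W_m\ni m$. Since $\varphi f\in\cO(U)$ vanishes on $U\setminus\op{supp}(\varphi)$ and $U\cup(M\setminus\op{supp}(\varphi))=M$, the gluing axiom for $\cO$ yields a unique $\widehat{\varphi f}\in\cO(M)$ restricting to $\varphi f$ on $U$ and to $0$ on $M\setminus\op{supp}(\varphi)$; if $f$ is homogeneous, so is $\widehat{\varphi f}$, of the same degree. Set $(D|_Uf)|_{W_m}:=(D\widehat{\varphi f})|_{W_m}$. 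For well-definedness, if $\varphi'$ is another such choice near some $m'$, then $\widehat{\varphi f}$ and $\widehat{\varphi' f}$ both equal $f$ on the open set $W_m\cap W_{m'}$, hence agree there, so by locality $D\widehat{\varphi f}$ and $D\widehat{\varphi' f}$ agree on $W_m\cap W_{m'}$; thus the local sections glue over the cover $\{W_m\}_{m\in U}$ to a well-defined $D|_Uf\in\cO(U)$. By construction $D|_U$ is $\R$-linear, and by the homogeneity remark it is homogeneous of degree $\deg D$.

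It then remains to verify the Leibniz rule, the compatibility condition, and uniqueness. For homogeneous $f,h\in\cO(U)$ and $m\in U$ with $\varphi=1$ on $W_m$, comparing on the cover $\{U,\ M\setminus\op{supp}(\varphi)\}$ gives $\widehat{\varphi f}\cdot\widehat{\varphi h}=\widehat{\varphi^2fh}$, while $\widehat{\varphi^2fh}$ agrees with $\widehat{\varphi fh}$ on $W_m$; applying $D$, the Leibniz rule for $D$ and locality, and restricting to $W_m$ (where $\widehat{\varphi f}=f$, $\widehat{\varphi h}=h$, and $D\widehat{\varphi f},D\widehat{\varphi h}$ restrict to $D|_Uf,D|_Uh$) yields the graded Leibniz identity for $D|_U$ on $W_m$, hence on $U$ after extending bilinearly. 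For $g\in\cO(M)$ one checks similarly that $\widehat{\varphi(g|_U)}=\varphi g$, so on $W_m$ we obtain $D|_U(g|_U)=(D(\varphi g))|_{W_m}=\big((D\varphi)\,g+\varphi\,Dg\big)|_{W_m}=(Dg)|_{W_m}$, using $\varphi|_{W_m}=1$ and $D\varphi|_{W_m}=0$ (locality applied to $\varphi-1$, together with $D(1)=0$); as the $W_m$ cover $U$, this gives $D|_U(g|_U)=(Dg)|_U$. Finally, any derivation $D'$ of $\cO(U)$ satisfying $D'(g|_U)=(Dg)|_U$ for all $g\in\cO(M)$ is local, and $f$ agrees with $\widehat{\varphi f}|_U$ near every point, so $D'f=D'(\widehat{\varphi f}|_U)=(D\widehat{\varphi f})|_U=D|_Uf$ near every point; hence $D'=D|_U$.

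\textbf{Main obstacle.} There is no serious obstacle: the loss of nilpotency of $\cJ$, which distinguishes the higher $\Zn$-case, plays no role here, and the proof reproduces the classical supermanifold argument. The one point requiring care is to establish locality of derivations \emph{before} anything else — the naive reasoning ``$\varphi=1$ near $m$, hence $D\varphi=0$ near $m$'' is circular — and this is precisely what the identity $g=(1-\psi)g$ (valid since $\psi g\equiv0$ globally) circumvents; everything else is bookkeeping with the extension-by-zero $\widehat{\varphi f}$ and the gluing axiom.
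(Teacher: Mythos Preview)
Your proof is correct and follows essentially the same approach as the paper: first establish locality of derivations via a bump-function argument (the paper applies $D$ to $\varphi h=0$ directly rather than rewriting $g=(1-\psi)g$, but these are algebraically equivalent and neither is circular), then define $D|_U$ by extending $\varphi f$ by zero and gluing the local pieces $(D\widehat{\varphi f})|_{W_m}$. Your write-up is considerably more detailed than the paper's --- you spell out the extension-by-zero via the sheaf gluing axiom, the Leibniz verification, and the uniqueness argument, all of which the paper leaves to the reader --- but the underlying strategy is the same.
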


\begin{proof}
Since we wish to localize $D$, we must first show that $D$ is a local operator, i.e., that if $h|_V = 0$, then $Dh|_V = 0$ for any open $V$. This result is of course a direct consequence of the existence of bump functions. Indeed, suppose $m \in V$ and choose a bump function $\varphi\in\cO^0(M)$ around $m$ whose support $\op{supp}(\varphi)$ is contained in $V$ and which is 1 in an open neighborhood $W$ of $m$. Then $\varphi h = 0$, whence $D(\varphi h) = D \varphi \cdot h + \varphi \cdot Dh = 0$. As $h|_W = 0$ and $\varphi|_W = 1$, we have that $Dh|_W = 0$. But this is true near any point $m \in V$, whence $Dh|_V = 0$.\medskip

As a derivation $D$ is thus a local operator, we can define its localization $D|_U$. Suppose $f \in \cO(U)$ and $m \in U$, and choose a bump function $\varphi$ around $m$. The function $h=\varphi f \in \cO(M)$ \cite{Leites:1980} agrees with $f$ in some open neighborhood $V$ of $m$ in $U$. By the above discussion, the function $(Dh)|_V$ is independent of the choice of $h$ that agrees with $f$ on $V$ and depends only on $f$. The same argument allows to see that the functions $(Dh)|_V$, which are defined on a cover of $U$ by open subsets $V$, piece together to define a unique function on $U$, which we denote by $D|_Uf\in\cO(U)$ and whose degree is $\deg(D)+\deg(f)$. This procedure defines an operator $D|_U:\cO(U)\to\cO(U)$, which is readily seen to be a graded derivation from the fact that $D$ is. The uniqueness of $D|_U$ is a straightforward consequence of our proof. Further, it follows from the definition of $D|_U$ that $D|_U(g|_U)=(Dg)|_U$.
\end{proof}

Hence, given $V \subset U$, we may define a restriction morphism $\rho_{UV}: \op{Der}_\R(\cO(U)) \to \op{Der}_\R(\cO(V))$ by assigning to any derivation $X$ on $U$ the unique derivation $X|_V$ on $V$ given by the preceding lemma. It is readily checked that the $\rho_{UV}$ so defined satisfy the axioms for the restriction morphisms of a sheaf of $\Zn$-graded $\cO$-modules. We denote this sheaf by $\op{Der}_\R\cO$. It is endowed with the $\cJ$-adic topology associated to the decreasing filtration $$\op{Der}_\R\cO\supset \cJ\cdot \op{Der}_\R\cO\supset \cJ^2\cdot \op{Der}_\R\cO\supset\ldots$$of $\Zn$-graded $\cO$-modules. This topology makes $\op{Der}_\R\cO$ a sheaf of $\Zn$-graded topological $\cO$-modules. Indeed, the just defined restriction maps are continuous. The usual argument goes through here: since $\rho_{UV}(\cJ^k(U)\cdot\op{Der}_\R(\cO(U)))\subset \cJ^k(V)\cdot\op{Der}_\R(\cO(V))$, the preimage $\rho_{UV}^{-1}(D_V+\cJ^k(V)\cdot\op{Der}_\R(\cO(V)))$, where $D_V\in\op{Der}_\R(\cO(V))$, is the union of the $\Delta_U+\cJ^k(U)\cdot\op{Der}_\R(\cO(U))$, where $\Delta_U\in\op{Der}_\R(\cO(U))$ runs through this preimage.
\begin{defn}
The {\it tangent sheaf} $\,\cT M$ of a $\mathbb{Z}^n_2$-supermanifold $M$ with structure sheaf $\cO$ is the sheaf of $\Zn$-graded topological $\cO$-modules
$$
\cT M(U) := \op{Der}_{\R}(\cO(U))\;
$$
with the restriction morphisms $\rho_{UV}$ defined above. The sections in $\cT M(U)$ are referred to as {\it vector fields} on $U$.
\end{defn}

The next property of derivations will be crucial in much of what follows:
\begin{prop}\label{dercont}
Any derivation in $\op{Der}_\R(\cO(U))$ over any open $U\subset M$ is $\cJ(U)$-adically continuous.
\end{prop}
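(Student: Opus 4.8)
The plan is to show that an arbitrary homogeneous derivation $D \in \op{Der}_\R(\cO(U))$ of degree $\delta$ sends $\cJ(U)^{k+1}$ into $\cJ(U)^k$ for every $k$; from this the $\cJ(U)$-adic continuity of $D$ follows immediately, since $D$ is $\R$-linear and translation-invariant, so it suffices to check that the preimage of each basic open set $Dh + \cJ(U)^k \cdot \cO(U)$ contains a basic open set around each of its points (the general, not necessarily homogeneous, case follows by decomposing $D$ into its finitely many homogeneous components). First I would recall that, by the discussion preceding the Proposition, $\cO_M$ is $\cJ$-adically Hausdorff complete, and that $\cJ = \ker \varepsilon$ where $\varepsilon \colon \cO_M \to \Ci_M$ is the projection onto the reduced structure sheaf; in local coordinates $(x,\xi)$ on a $\Zn$-superdomain, $\cJ(U)$ is the ideal generated by the formal coordinates $\xi^a$, and $\cJ(U)^{k}$ consists of those formal power series all of whose terms have total $\xi$-degree at least $k$.

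The key step is the inclusion $D(\cJ(U)^{k}) \subset \cJ(U)^{k-1}$. Working locally, a typical generator of $\cJ(U)^{k}$ is a product $g\, \xi^{a_1} \cdots \xi^{a_k}$ with $g \in \cO(V)$ for some coordinate domain $V \subset U$; applying the graded Leibniz rule repeatedly, $D(g\,\xi^{a_1}\cdots\xi^{a_k})$ is a sum of terms, each of which either still contains at least $k-1$ of the factors $\xi^{a_i}$ (when $D$ hits $g$ or one of the $\xi^{a_i}$ is replaced by $Dg$-type data of nonzero degree, which again lies in $\cJ$), or contains $k-1$ of the $\xi^{a_i}$ times $D\xi^{a_j}$; in every case the resulting term lies in $\cJ(V)^{k-1}$, because each surviving $\xi^{a_i}$ contributes a factor in $\cJ$ and $D\xi^{a_j} \in \cO(V)$ is arbitrary. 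Here one must be a little careful: a general element of $\cJ(U)^{k}$ is not a finite sum of such monomials but a (convergent) formal power series, so I would first establish the monomial statement on generators, extend it to finite $\cO$-linear combinations, and then pass to the $\cJ$-adic limit using that $D$ itself is being shown continuous — to avoid circularity, I would instead argue directly: writing $f \in \cJ(U)^{k}$ as $f = \sum_{|\alpha| \geq k} f_\alpha \xi^\alpha$, note that $f - \sum_{k \le |\alpha| \le m} f_\alpha \xi^\alpha \in \cJ(U)^{m+1}$, so $Df$ and $D\big(\sum_{k \le |\alpha| \le m} f_\alpha \xi^\alpha\big)$ agree modulo $D(\cJ(U)^{m+1})$; the partial sums lie in $\cJ(U)^{k-1}$ by the finite case, and one concludes $Df \in \cJ(U)^{k-1}$ provided $D(\cJ(U)^{m+1}) \subset \cJ(U)^{k}$ for $m$ large, which is exactly the finite-case statement shifted. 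Thus no limiting argument involving continuity of $D$ is needed — the filtration-lowering property is proved purely from the Leibniz rule on finite sums plus the fact that $\cJ(U) = \bigcap$ is captured degreewise.

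The main obstacle I anticipate is handling the globalization and the non-nilpotency of $\cJ$ cleanly: unlike the classical super case, $\cJ$ is not nilpotent, so one genuinely needs the filtration-lowering statement for all $k$ rather than a single bound, and one must make sure the local arguments in coordinate patches glue — this is where I would invoke the localization lemma for derivations proved just above (so that $D$ restricts compatibly to coordinate domains) together with the sheaf property of $\cJ$ and $\cO$. Once $D(\cJ(U)^{k+1}) \subset \cJ(U)^{k}$ is in hand, continuity is routine: given a basic open $Dh + \cJ(U)^{k}\cO(U)$ and a point $f_0$ in its preimage, for any $f \in f_0 + \cJ(U)^{k+1}\cO(U)$ we get $Df - Df_0 = D(f-f_0) \in D(\cJ(U)^{k+1}\cO(U)) \subset \cJ(U)^{k}\cO(U)$ (using also that $D$ of a product with a $\cJ^{k+1}$ factor stays in $\cJ^k$ by Leibniz), so $Df$ lies in the target basic open set, proving $f_0 + \cJ(U)^{k+1}\cO(U)$ is contained in the preimage.
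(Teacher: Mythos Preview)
Your overall strategy---prove $D(\cJ(U)^{k+1})\subset\cJ(U)^k$ via the Leibniz rule and then deduce continuity---is exactly the paper's approach, and your final paragraph deriving continuity from the filtration-lowering property is correct. The problem is the middle of your argument, where you manufacture an obstacle that is not there and then handle it in a way that, as written, is circular.

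Your worry is that ``a general element of $\cJ(U)^{k}$ is not a finite sum of such monomials but a (convergent) formal power series.'' This is false: $\cJ(U)^{k}$ is by definition the $k$-th power of the ideal $\cJ(U)$, so every element is a \emph{finite} sum of products $a_1\cdots a_k$ with $a_i\in\cJ(U)$. (Even if you think of it locally as the set of series $\sum_{|\alpha|\ge k} f_\alpha(x)\xi^\alpha$, such a series can be rewritten as $\sum_{a_1,\ldots,a_k}\xi^{a_1}\cdots\xi^{a_k}\,g_{a_1\ldots a_k}$, a finite sum over the finitely many $k$-tuples of formal coordinates, with $g_{a_1\ldots a_k}\in\cO(U)$; this is why the two descriptions of $\cJ(U)^k$ coincide.) The paper therefore proceeds by a clean coordinate-free induction: any element of $\cJ(U)^{k+1}=\cJ(U)\cdot\cJ(U)^{k}$ is a finite sum of products $ab$ with $a\in\cJ(U)$, $b\in\cJ(U)^{k}$, and
\[
D(ab)=Da\cdot b \pm a\cdot Db \in \cJ(U)^{k}+\cJ(U)\cdot\cJ(U)^{k-1}=\cJ(U)^{k}.
\]
No limits, no local coordinates, and no gluing over charts are required. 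Your attempted workaround (``one concludes $Df\in\cJ^{k-1}$ provided $D(\cJ^{m+1})\subset\cJ^{k}$ for $m$ large, which is exactly the finite-case statement shifted'') is circular as stated: you are invoking $D(\cJ^{m+1})\subset\cJ^{k}$ for \emph{arbitrary} elements of $\cJ^{m+1}$, which is precisely the claim you are trying to establish. Once you drop the false premise about infinite sums, the whole detour---and the globalization discussion---disappears.
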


\begin{rema}\label{ContRem} Just as the ideal sheaf $\cJ$ induces the $\cJ$-adic topology on the algebra sheaf $\cO$ $(\,$the ideal $\cJ(U)$ induces the $\cJ(U)$-adic topology on the algebra $\cO(U)$ -- for any open $U\subset M$$\,)$, the stalk $\cJ_m$ implements the $\cJ_m$-adic topology on the stalk $\cO_m$ -- for any $m\in M$. We can of course consider the derivations $\op{Der}_\R\cO_m$. As for $\op{Der}_\R(\cO(U))$, any derivation in $\op{Der}_\R\cO_m$ is $\cJ_m$-adically continuous. For any $X\in\op{Der}_\R(\cO(U))$ its continuity implies that if a sequence of sections $f_k\in\cO(U)$ tends $\cJ(U)$-adically to a section $f\in\cO(U)$, then $Xf_k$ tends $\cJ(U)$-adically to $Xf$. A similar statement holds at the level of stalks.\end{rema}

\begin{proof}
For $X\in\op{Der}_\R(\cO(U))$ and $k\in\N\setminus\{0\}$, we have $X (\cJ^k(U)) \subset \cJ^{k-1}(U)$. Indeed, the case $k = 1$ is vacuously true. Suppose $X(\cJ^k(U)) \subset \cJ^{k-1}(U)$ for some $k$. Any element of $\cJ^{k+1}(U)$ is a finite sum of elements of the form $ab$, with $a \in \cJ(U)$ and $b \in \cJ^k(U)$. Then $$X(ab) = Xa \cdot b + (-1)^{\langle \deg(X), \deg(a) \rangle} a \cdot Xb\in \cJ^k(U)\;.$$
Let now $g \in \cO(U)$. Again the fact that $X (\cJ^{k+1}(U)) \subset \cJ^k(U)$ implies that $X^{-1}(g + \cJ^k(U))$ is the union of the $\cJ(U)$-adically open sets $f + \cJ^{k+1}(U)$, where $f$ runs over all elements of $X^{-1}(g + \cJ^k(U))$, so that $X^{-1}(g + \cJ^k(U))$ is open.
\end{proof}

\begin{prop}The real $\Zn$-graded vector space $\cT M(U)$ of vector fields on $U$ carries a \emph{$\mathbb{Z}^n_2$-graded Lie algebra} structure, which is given by the $\Zn$-graded Lie bracket
$$
[X, Y](f) := X(Yf) - (-1)^{\langle \deg(X),\, \deg(Y) \rangle} Y(Xf),
$$
for homogeneous $X, Y \in \cT M(U)$ and any $f \in \cO(U)$.\end{prop}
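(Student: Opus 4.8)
The plan is to check, in this order: (i) that for homogeneous $X,Y\in\cT M(U)$ the operator $[X,Y]$ is again a derivation, of degree $\deg(X)+\deg(Y)$, so that after $\R$-bilinear extension the bracket is a well-defined operation $\cT M(U)\times\cT M(U)\to\cT M(U)$; (ii) $\Zn$-graded antisymmetry; and (iii) the $\Zn$-graded Jacobi identity. The only step carrying genuine content is (i): once we know the bracket lands back in $\cT M(U)$, items (ii) and (iii) are the purely formal statement that the graded commutator $[A,B]:=AB-(-1)^{\langle\deg(A),\deg(B)\rangle}BA$ on any $\Zn$-graded associative algebra — here the $\Zn$-graded associative algebra of degree-homogeneous $\R$-linear endomorphisms of $\cO(U)$, inside which $\cT M(U)$ sits as a graded submodule — defines a $\Zn$-graded Lie algebra.

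For (i), fix homogeneous $X,Y$ of degrees $\alpha$ and $\beta$, and take $f,g\in\cO(U)$ with $f$ homogeneous of degree $\varphi$. Applying the graded Leibniz rule for $Y$ and then for $X$, one expands $X\big(Y(fg)\big)$ into four terms: the two ``diagonal'' ones $X(Yf)\cdot g$ and $(-1)^{\langle\alpha+\beta,\varphi\rangle}\,f\cdot X(Yg)$, and two ``cross'' ones, $(-1)^{\langle\alpha,\beta+\varphi\rangle}\,Yf\cdot Xg$ and $(-1)^{\langle\beta,\varphi\rangle}\,Xf\cdot Yg$. Expanding $Y\big(X(fg)\big)$ the same way merely interchanges $\alpha\leftrightarrow\beta$ and $X\leftrightarrow Y$. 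Forming $X(Y(fg))-(-1)^{\langle\alpha,\beta\rangle}Y(X(fg))$ and using that $\langle\,,\,\rangle$ is symmetric and bi-additive (so $(-1)^{\langle\alpha,\beta\rangle+\langle\beta,\alpha\rangle}=1$ and $\langle\alpha,\varphi\rangle+\langle\beta,\varphi\rangle=\langle\alpha+\beta,\varphi\rangle$ mod $2$), the two cross terms cancel exactly against their counterparts, leaving
\[
[X,Y](fg)=[X,Y](f)\cdot g+(-1)^{\langle\alpha+\beta,\varphi\rangle}\,f\cdot[X,Y](g),
\]
which is the graded Leibniz rule for an operator of degree $\alpha+\beta$. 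Since $X\circ Y$ and $Y\circ X$ both raise $\Zn$-degree by $\alpha+\beta$, so does $[X,Y]$; hence $[X,Y]$ lies in the degree $\alpha+\beta$ part of $\cT M(U)$, and $\R$-bilinear extension gives the bracket.

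Granting (i): statement (ii) is immediate, since swapping $X$ and $Y$ in the definition and using $\langle\alpha,\beta\rangle=\langle\beta,\alpha\rangle$ yields $[X,Y]=-(-1)^{\langle\alpha,\beta\rangle}[Y,X]$ on homogeneous elements, hence in general. For (iii), let $X,Y,Z$ be homogeneous of degrees $\alpha,\beta,\gamma$ and evaluate
\[
(-1)^{\langle\alpha,\gamma\rangle}[X,[Y,Z]]+(-1)^{\langle\beta,\alpha\rangle}[Y,[Z,X]]+(-1)^{\langle\gamma,\beta\rangle}[Z,[X,Y]]
\]
on an arbitrary $f\in\cO(U)$, expanding each nested bracket into its six signed monomials $\pm X(Y(Zf))$, $\pm X(Z(Yf))$, and so on; a bookkeeping check shows that each of the twelve resulting terms appears twice with opposite sign, the signs being precisely those produced by commuting the operator symbols past one another via $\langle\,,\,\rangle$, so the sum vanishes. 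The main obstacle is thus nothing conceptual but simply keeping the sign exponents straight in this last expansion; it is handled most cleanly by noting that the identity is a special case of the general fact that the graded commutator on a $\Zn$-graded associative algebra satisfies the $\Zn$-graded Jacobi identity, which one verifies once and then applies to the endomorphism algebra, restricting to $\cT M(U)$ by (i).
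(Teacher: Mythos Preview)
Your proof is correct and follows the same approach as the paper, which merely asserts that the bilinearity, degree-respecting nature, graded antisymmetry, and graded Jacobi identity are ``straightforwardly checked''; you have simply supplied those straightforward checks in detail.
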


Indeed, it is straightforwardly checked that the above defines an $\R$-bilinear degree respecting operation $[- \, , -]$, which is $\Zn$-graded antisymmetric and satisfies the $\Zn$-graded Jacobi identity.\medskip

\begin{prop}\label{tangentbasis}
Let $M$ be a $\,\mathbb{Z}^n_2$-supermanifold of dimension $p|{\bf q}\,$. Then $\cT M$ is a locally free rank $p|{\bf q}$ sheaf of $\Zn$-graded topological $\cO$-modules. More precisely, if $u=(u^i)$ is a coordinate system on an open set $U$, the partial derivatives $(\partial_{u^i})$ form an $\cO(U)$-basis of $\cT M(U)$.
\end{prop}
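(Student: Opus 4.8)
The plan is to reduce immediately to the local model and then produce the basis by hand. Since $\cT M$ has already been identified as a sheaf of $\Zn$-graded topological $\cO$-modules and local freeness is a local property, it suffices to work on a fixed $\Zn$-superdomain: take $U_0\subset\R^p$ open, $\cO(U)=\Ci(U_0)[[\xi]]$ with coordinates $u=(u^i)=(x^j,\xi^a)$, and show $(\partial_{u^i})$ is a basis of the $\cO(U)$-module $\op{Der}_\R(\cO(U))$. First I would record that each $\partial_{u^i}$ is a well-defined $\Zn$-graded derivation: it acts coefficient-by-coefficient on formal power series and is $\cJ(U)$-adically continuous, as is every derivation by Proposition~\ref{dercont}. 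Among the $\partial_{u^i}$ there are $p$ of degree $0$ and $q_i$ of degree $\gamma_i$, which is exactly the claimed rank $p|{\bf q}$. Linear independence over $\cO(U)$ is immediate: applying $\sum_i f_i\,\partial_{u^i}$ to the coordinate $u^j$ returns $f_j$, so the operator vanishes only when all $f_j=0$.

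The substance is the spanning statement. Given a homogeneous $D\in\op{Der}_\R(\cO(U))$ of degree $\gamma$, set $f_i:=Du^i\in\cO(U)$ and $D':=D-\sum_i f_i\,\partial_{u^i}$; one checks $D'$ is again a homogeneous derivation of degree $\gamma$ (left multiplication by a homogeneous element preserves the derivation property, and $2\deg u^i=0$ in $\Zn$) and that $D'u^j=0$ for every coordinate. The goal is $D'=0$, whence $D=\sum_i(Du^i)\,\partial_{u^i}$, the non-homogeneous case following by additivity. By the graded Leibniz rule $D'$ kills the constants $\R\subset\cO(U)$ and every formal monomial $\xi^\alpha$ (it annihilates each factor $\xi^a$). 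The key point is that $D'$ also annihilates the subalgebra $\Ci(U_0)\hookrightarrow\cO(U)$: for $g\in\Ci(U_0)$ write $D'g=\sum_{\beta\in\N^N}c_\beta(g)\,\xi^\beta$; since elements of $\Ci(U_0)$ have only a $\xi^0$-component, a direct computation shows each coefficient map $c_\beta\colon\Ci(U_0)\to\Ci(U_0)$ is an ordinary $\R$-linear derivation, and $c_\beta(x^i)$ is the $\xi^\beta$-component of $D'x^i=0$; hence each $c_\beta$ annihilates the coordinates of $U_0$ and therefore vanishes, by the classical fact (Hadamard's lemma) that a derivation of $\Ci(U_0)$ killing the $x^i$ is zero. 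Thus $D'g=0$ for all smooth $g$, in particular $D'f_\alpha=0$ for every Taylor coefficient.

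To finish, I would pass from monomials and coefficients to a general $f=\sum_{\alpha\in\N^N}f_\alpha(x)\,\xi^\alpha\in\cO(U)$. Grouping by total order $|\alpha|$ exhibits $f$ as a $\cJ(U)$-adically convergent series, since $\{\alpha:|\alpha|=k\}$ is finite and $\xi^\alpha\in\cJ^{|\alpha|}(U)$, and Hausdorff completeness of $\cO_M$ (Proposition~6.9 in \cite{Covolo:2016}) identifies its limit with $f$. The $\cJ(U)$-adic continuity of $D'$ (Proposition~\ref{dercont} and Remark~\ref{ContRem}) then gives $D'f=\sum_\alpha D'(f_\alpha\xi^\alpha)=\sum_\alpha\big(D'(f_\alpha)\,\xi^\alpha+f_\alpha\,D'(\xi^\alpha)\big)=0$ (no sign, as $f_\alpha$ has degree $0$). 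Hence $D'=0$, so $(\partial_{u^i})$ is a free $\cO(U)$-basis of $\op{Der}_\R(\cO(U))$, and globally $\cT M$ is locally free of rank $p|{\bf q}$.

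I expect the main obstacle to be precisely the step ``a derivation killing all coordinates is zero''. In the smooth or super setting one concludes by reducing to a finite sum using nilpotency of the relevant ideal, which is unavailable here since $\cJ$ is not nilpotent; the remedy is to use, essentially at every turn, the $\cJ$-adic continuity of derivations together with Hausdorff completeness, and to deal with the purely smooth part by the classical Hadamard argument applied component-by-component in the $\xi$-expansion. A secondary, routine point is that the operators $\partial_{u^i}$ are well defined on power series — which follows since derivations are local operators, as in the localization lemma above — and that the reduction of the smooth part to Hadamard's lemma goes through verbatim as in the classical case.
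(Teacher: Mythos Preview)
Your argument is correct, but it takes a genuinely different route from the paper's. The paper also reduces to showing $D':=D-\sum_b(Du^b)\partial_{u^b}$ vanishes, but then proceeds by \emph{polynomial approximation}: $D'$ kills all polynomials in the $u^b$, hence by $\cJ$-continuity all ``polynomial sections'' $\sum_\mu P_\mu(x)\xi^\mu$ with $P_\mu$ polynomial in $x$; then, invoking \cite[Theorem 6.10]{Covolo:2016}, for any $f$ and any $m$ and any $k$ there is a polynomial section $\mathcal{P}$ with $[f]_m-[\mathcal{P}]_m\in\mathfrak{m}_m^k$, and since the induced stalk derivation sends $\mathfrak{m}_m^k$ to $\mathfrak{m}_m^{k-1}$ one concludes $[D'f]_m=0$ for all $m$. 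You instead treat the $x$- and $\xi$-coordinates asymmetrically: expanding $D'g=\sum_\beta c_\beta(g)\xi^\beta$ for $g\in\Ci(U_0)$ and observing that each $c_\beta$ is a classical derivation of $\Ci(U_0)$ annihilating the $x^i$, you kill the smooth part by Hadamard's lemma, and then handle the formal $\xi$-series by $\cJ$-adic continuity alone. Your approach is more elementary in that it avoids both the polynomial approximation theorem and the stalk-level $\mathfrak{m}_m$-filtration argument; the paper's approach is more uniform in the coordinates and exhibits the polynomial approximation theorem as the natural substitute for Taylor expansion in this setting.
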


\begin{rema}
Consequently, the stalk $(\cT M)_m$ at any $m\in M$ is a free $\cO_m$-module of rank $p|{\bf q}$ with induced basis $\left( \left[\partial_{u^i} \right]_m \right) \,$.
\end{rema}

\begin{proof}
That the $\partial_{u^i}$ are $\cO(U)$-linearly independent is readily checked. To show that they span $\cT M(U)$, let $D$ be a derivation on $U$. Set $a^b:= Du^b$ and $D': = D - \sum_b a^b \partial_{u^b}$. Since $D'$ is a graded derivation that vanishes on the $u^b$, we get $D'P = 0$ for any polynomial $P$ in the $u^b$. By $\cJ(U)$-continuity, it follows that $D'\mathcal{P} = 0$ for any polynomial section in the sense of \cite{Covolo:2016}, i.e., for any section in $\cO(U)$ of the form $$\mathcal{P} = \sum_{|\mu| \geq 0} P_\mu(x) \xi^\mu\;,$$ where $\mu$ is a multi-index and $P_\mu(x)$ is a polynomial in the zero-degree coordinates $x^i$ (apply Remark \ref{ContRem} to the sequence $\mathcal{P}_k$ obtained by taking in the sum $\mathcal{P}$ only the terms $|\mu|\le k$). Let now $f \in \cO(U)$ and let $m$ be any point in $U$. By polynomial approximation \cite[Theorem 6.10]{Covolo:2016}, for any $k$ there exists a polynomial section $\mathcal{P}$ such that $[f]_m - [\mathcal{P}]_m \in \mathfrak{m}^k_m$, where $\mathfrak{m}_m$ denotes the unique homogeneous maximal ideal of $\cO_m$. Applying $D'$ to $f - \mathcal{P}$, we see that $[D'f]_m$ lies in $\mathfrak{m}^{k-1}_m$ for every $k$, hence $[D'f]_m = 0$. Since $m \in U$ was arbitrary, $D'f = 0$ for any function $f \in \cO(U)$, i.e., $D = \sum_b a^b \partial_{u^b}$. The point is here that the induced derivation $D'_m$ sends ${\frak m}_m^k$ to $\frak{m}_m^{k-1}$, what can be proven by induction on $k$ using the fact that $D'(fg)=(D'f)g\pm f D'g$. Indeed, to show that the induction starts for $k=2$, it suffices to choose $f,g\in\frak{m}_m$, and, to show that the statement holds for $k+1$ if it holds for $k\ge 2$, if suffices to choose $f\in\frak{m}_m,g\in\frak{m}^k_m$.
\end{proof}

\begin{cor}
The sheaf of modules $\cT M$ is Hausdorff complete. Likewise, at every point $m\in M$, the stalk $(\cT M)_m$ is Hausdorff complete.
\end{cor}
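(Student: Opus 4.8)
The plan is to derive both assertions from two facts already available: the local freeness of $\cT M$ (Proposition~\ref{tangentbasis}, together with its stalkwise version in the following Remark) and the $\cJ$-adic Hausdorff completeness of $\cO_M$ (Proposition~6.9 in \cite{Covolo:2016}), plus the analogous completeness of the stalks $\cO_m$, which follows from the local model in which $\cO_m$ is a formal power series ring $\Ci_m[[\xi]]$ and $\cJ_m$ is the homogeneous ideal generated by the graded variables $\xi$ (so $\cO_m/\cJ_m^k \cong \Ci_m$ tensored with the finite-dimensional space of polynomials in $\xi$ of $\xi$-order less than $k$, whose inverse limit over $k$ is again $\cO_m$). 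The one algebraic ingredient required is the elementary remark that a \emph{finite free} module over a $\Zn$-graded ring $R$ which is Hausdorff complete in the $I$-adic topology (for $I$ a homogeneous ideal) is again $I$-adically Hausdorff complete: if $F$ is a finite direct sum of grading-shifted copies of $R$, then $I^kF$ is the corresponding sum of copies of $I^k$, so $F/I^kF$ is the corresponding sum of copies of $R/I^k$; since inverse limits commute with finite direct sums, the canonical map $F\to\varprojlim_k F/I^kF$ is the direct sum of the canonical maps $R\to\varprojlim_k R/I^k$, hence an isomorphism. This isomorphism automatically matches the two $I$-adic topologies, so nothing further is needed on the topological side.

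For the stalk statement I would fix $m\in M$ and invoke the Remark after Proposition~\ref{tangentbasis}, which identifies $(\cT M)_m$ with the finite free $\cO_m$-module $(\cO_m)^{p|{\bf q}}$, its $\cJ_m$-adic filtration being the componentwise filtration by powers of $\cJ_m$. Since $\cO_m$ is $\cJ_m$-adically Hausdorff complete, the algebraic remark above applies directly and shows that $(\cT M)_m$ is Hausdorff complete.

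For the sheaf statement it suffices to check that the canonical sheaf morphism $p\colon\cT M\to\varprojlim_{k\in\N}\cT M/\cJ^k\cT M$ is an isomorphism; being a morphism of sheaves, this can be tested on a basis of the topology of $M$, and I would use the basis of coordinate domains $U$ from Proposition~\ref{tangentbasis}. On such a $U$ the partial derivatives $\partial_{u^i}$ provide an isomorphism of $\Zn$-graded $\cO(U)$-modules $\cT M(U)\cong\cO(U)^{p|{\bf q}}$ identifying $\cJ^k(U)\cdot\cT M(U)$ with the componentwise powers of $\cJ^k(U)$; moreover on $U$ the quotient presheaves $\cO/\cJ^k$ are already sheaves (each is $\Ci_{\R^p}$ tensored with the finite-dimensional space of polynomials in $\xi$ of degree less than $k$), so that $(\cT M/\cJ^k\cT M)(U)=\cT M(U)/\cJ^k(U)\cT M(U)$. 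Since $\cO(U)$ is $\cJ(U)$-adically Hausdorff complete, the algebraic remark shows $\cT M(U)\to\varprojlim_k\cT M(U)/\cJ^k(U)\cT M(U)$ is an isomorphism, and letting $U$ range over the basis yields that $p$ is an isomorphism of sheaves.

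The algebra here is routine; the part that needs attention is the sheaf-theoretic bookkeeping in the last paragraph — namely, that on coordinate domains the section-wise quotients already compute the quotient sheaves (so evaluating $p$ on such a $U$ really gives the expected completion map), and that an isomorphism of sheaves may legitimately be verified over a basis of opens. Both points are standard, and they rest on the local structure theory of $\Zn$-superdomains developed in \cite{Covolo:2016}.
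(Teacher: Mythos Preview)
Your argument is correct and follows the same route as the paper: local freeness of $\cT M$ (Proposition~\ref{tangentbasis}) together with $\cJ$-adic Hausdorff completeness of $\cO$ forces completeness of $\cT M$. The paper's proof is a two-line sketch that leaves implicit exactly the points you spell out (finite free modules over complete rings are complete, the check on a coordinate basis, and the stalkwise version), so your write-up is a careful unpacking of the same idea rather than a different approach.
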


\begin{proof}
By Proposition \ref{tangentbasis}, for any coordinate set $U$, $\cT M(U)$ is a free $\cO(U)$-module of finite rank. Since the structure sheaf $\cO$ is $\cJ$-adically Hausdorff complete, we can conclude.
\end{proof}

\subsection{Tangent space and tangent map}

\begin{defn}
Let $M$ be a $\Zn$-supermanifold and let $m \in M$. The {\it tangent space} to $M$ at $m$, denoted $T_mM$, is the $\mathbb{Z}^n_2$-super $\R$-vector space $\op{Der}_{\R,m}\cO_m$ of graded $\R$-linear derivations $\cO_m \to \R$.
\end{defn}

\begin{prop}
Let $X$ be a vector field defined in a neighborhood of $m$. Then $X$ induces a tangent vector $X|_m$ to $M$ at $m$. If $X$ is homogeneous, the degree of $X|_m$ is the same as that of $X$.
\end{prop}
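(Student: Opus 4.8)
The plan is to construct $X|_m$ by composing the germ-evaluation with $X$ and then the residue projection to $\R$. First I would note that a vector field $X$ defined in a neighborhood of $m$, say $X \in \cT M(U) = \op{Der}_\R(\cO(U))$ with $m \in U$, passes to the stalk: the localization lemma proven above (or, more directly, the fact that $X$ is a local operator, established there using bump functions) shows that $X$ induces a derivation $X_m$ on the stalk $\cO_m$, i.e. $X_m \in \op{Der}_\R(\cO_m)$, with $\deg(X_m) = \deg(X)$ if $X$ is homogeneous. Concretely, $X_m([f]_m) := [Xf]_m$ is well-defined because if $f$ vanishes near $m$ then so does $Xf$.

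Next I would compose with the residue map. Recall from the excerpt that the stalk $\cO_m$ is a local graded ring with unique homogeneous maximal ideal $\frak{m}_m$, and the quotient $\cO_m / \frak{m}_m \cong \R$ (this is implicit in the structure of $\Zn$-superdomains and is used repeatedly, e.g. in the proof of Proposition \ref{tangentbasis}). Let $\ev_m : \cO_m \to \R$ denote this evaluation. I would then define $X|_m := \ev_m \circ X_m : \cO_m \to \R$. This is manifestly $\R$-linear. To see it is a graded derivation into $\R$ (where $\R$ sits in degree $0$), take homogeneous $[f]_m, [g]_m \in \cO_m$ and apply $X_m$ to the product, then $\ev_m$:
\begin{align*}
X|_m([f]_m[g]_m) &= \ev_m\bigl( X_m([f]_m)\cdot[g]_m + (-1)^{\langle \deg(X),\deg(f)\rangle}[f]_m\cdot X_m([g]_m)\bigr) \\
&= X|_m([f]_m)\,\ev_m([g]_m) + (-1)^{\langle \deg(X),\deg(f)\rangle}\,\ev_m([f]_m)\,X|_m([g]_m),
\end{align*}
using that $\ev_m$ is an algebra morphism. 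Hence $X|_m \in \op{Der}_{\R,m}\cO_m = T_m M$, and since $\ev_m$ is degree-preserving, $\deg(X|_m) = \deg(X_m) = \deg(X)$ when $X$ is homogeneous.

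The only genuinely substantive step is the passage from the section $X$ on $U$ to the stalk derivation $X_m$ — that is, checking that $X$ is a local operator so that the germ-level formula $X_m([f]_m) = [Xf]_m$ is independent of the representative $f$. But this is exactly the bump-function argument already carried out in the proof of the localization lemma above (the part showing $h|_V = 0 \Rightarrow Dh|_V = 0$), so I would simply cite that. Everything after that — linearity, the Leibniz rule, and degree-tracking — is routine verification of the kind indicated by the "as may readily be verified" phrasing used elsewhere in the paper. So there is no real obstacle here; the proposition is essentially a definition-unwinding once the locality of derivations is in hand.
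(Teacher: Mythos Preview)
Your proof is correct and follows essentially the same route as the paper: pass $X$ to the stalk derivation $X_m$, then post-compose with the algebra morphism $\cO_m\to\R$. The only cosmetic difference is that the paper factors this last map as $\cO_m\xrightarrow{\varepsilon_m}\Ci_m\xrightarrow{\ev_m}\R$ (first kill the formal variables, then evaluate), whereas you go directly via the quotient by $\mathfrak{m}_m$; these are the same homomorphism, so the arguments coincide.
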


\begin{proof}
A vector field $X: \cO(U) \to \cO(U)$ defined in a neighborhood $U$ of $m$ induces a graded derivation $X_m: \cO_m \to \cO_m$ at the stalk level such that if $X$ is homogeneous, $X_m$ has the same degree as $X$. Let $\varepsilon_m: \cO_m \to \Ci_m$ be the algebra morphism (of degree $0$) induced by the algebra morphism $\varepsilon_U:\cO(U)\to\Ci(U)$ and let $\ev_m: \Ci _m \to \R$ be the evaluation algebra morphism (of degree $0$) at $m$. We set
\be\label{tgtvecfromfield}
(X|_m)[f]_m := (\ev_m \circ \varepsilon_m \circ X_m)[f]_m\;.
\ee
It is readily verified that $X|_m: \cO_m \to \R$ has all the announced properties.
\end{proof}

As with the tangent sheaf, if $\dim(M) = p|{\bf q}$ then $\dim(T_mM) = p|{\bf q}$. Indeed, given a coordinate system $(x^i, \xi^a)$ centered at $m$, the tangent vectors $\left(\partial_{x^i}|_m, \partial_{\xi^a}|_m\right)$ at $m$, induced by the coordinate vector fields following \eqref{tgtvecfromfield}, are a basis for $T_mM$. As in the case of the tangent sheaf, this is proven by polynomial approximation.

\begin{rema} It is worth remembering that $X|_m[f]_m=(\varepsilon\,Xf)(m)$. Since the evaluation at $m$ of $Xf\in\cO(U)$ is meaningless, we often just write $X|_m[f]_m=(Xf)(m)$ or even $X|_m[f]_m=Xf|_m$. In particular, if $u=(x,\xi)$, we have $$\partial_{u^b}|_m[f]_m=(\varepsilon\;\partial_{u^b}f)(m)=\partial_{u^b}f|_m\;.$$\end{rema}

We may compare the geometric fiber of the tangent sheaf at a point $m$ with the tangent space at $m$ defined above.
\begin{prop}
Let $m \in M$ be a point, $\mathfrak{m}_m$ the maximal ideal of $\cO_m$. Then, $$ T_m M \simeq (\cT M)_m / \left(\mathfrak{m}_m\cdot(\cT M)_m\right)$$
as $\mathbb{Z}^n_2$-super $\R$-vector space.
\end{prop}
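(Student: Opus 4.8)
The plan is to construct mutually inverse $\Zn$-graded $\R$-linear maps between $(\cT M)_m / (\mathfrak{m}_m\cdot(\cT M)_m)$ and $T_mM = \op{Der}_{\R,m}(\cO_m,\R)$, and to check they are well defined. First I would recall from the proof of the preceding proposition that a germ of a vector field $[X]_m \in (\cT M)_m$ gives rise to a tangent vector $X|_m \in T_mM$ via $X|_m[f]_m = (\ev_m\circ\varepsilon_m\circ X_m)[f]_m$; this assignment $[X]_m \mapsto X|_m$ is manifestly $\R$-linear and degree-preserving, so it descends to a map $\Psi$ on the quotient once we verify that $\mathfrak{m}_m\cdot(\cT M)_m$ lies in its kernel. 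For that, take a homogeneous $g \in \mathfrak{m}_m$ and a germ $[Y]_m$; then $((gY)|_m)[f]_m = \ev_m\varepsilon_m(g_m\cdot Y_m[f]_m)$, and since $\varepsilon_m$ is an algebra morphism with $\varepsilon_m(g_m) \in \mathfrak{m}_m^{\Ci} \subset \ker\ev_m$ (as $g\in\mathfrak{m}_m$ means $\varepsilon(g)$ vanishes at $m$), the whole expression is $0$. Hence $\Psi$ is well defined.

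Next I would produce the inverse. Using a coordinate system $u = (u^i) = (x^i,\xi^a)$ centered at $m$ on a neighborhood $U$, Proposition~\ref{tangentbasis} gives that $(\cT M)_m$ is free over $\cO_m$ on the germs $[\partial_{u^i}]_m$, so the quotient $(\cT M)_m/(\mathfrak{m}_m\cdot(\cT M)_m)$ is a free module over $\cO_m/\mathfrak{m}_m \cong \R$, i.e.\ an $\R$-vector space with basis the classes $\overline{[\partial_{u^i}]_m}$. On the other side, as noted in the excerpt right after the proof of that proposition (via polynomial approximation), $T_mM$ has $\R$-basis $(\partial_{u^i}|_m)$. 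Define $\Phi$ on basis elements by $\partial_{u^i}|_m \mapsto \overline{[\partial_{u^i}]_m}$ and extend $\R$-linearly; this is visibly a degree-preserving $\R$-linear isomorphism of vector spaces. It then remains to check $\Psi\circ\Phi = \id$ on the basis, which is immediate since $\Psi(\overline{[\partial_{u^i}]_m}) = \partial_{u^i}|_m$, and since both sides have the same (finite) dimension $p|\vect{q}$, $\Phi$ and $\Psi$ are mutually inverse.

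Alternatively, and perhaps more cleanly, I would avoid picking a basis twice and instead argue as follows: the map $\Psi\colon (\cT M)_m/(\mathfrak{m}_m\cdot(\cT M)_m) \to T_mM$ is well defined by the first paragraph; it is surjective because any tangent vector, in the chosen coordinates, is an $\R$-combination $\sum_i c^i\,\partial_{u^i}|_m$ and hence equals $\Psi(\overline{\sum_i c^i[\partial_{u^i}]_m})$; it is injective because if $\sum_i a^i_m\,[\partial_{u^i}]_m$ (with $a^i \in \cO(U)$) maps to $0$, then evaluating the resulting tangent vector on the coordinate germs $[u^j]_m$ gives $\varepsilon(a^j)(m) = 0$ for all $j$, i.e.\ each $a^j_m \in \mathfrak{m}_m$, so the original germ already lay in $\mathfrak{m}_m\cdot(\cT M)_m$.

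The only genuinely delicate point is the well-definedness of $\Psi$ together with the injectivity argument: both rest on the identification $\cO_m/\mathfrak{m}_m \cong \R$ and on the compatibility $\ev_m\circ\varepsilon_m$ sending $\mathfrak{m}_m$ to $0$, which is exactly the statement that $\mathfrak{m}_m = \ker(\ev_m\circ\varepsilon_m)$ as used implicitly in the construction \eqref{tgtvecfromfield}. Everything else — $\R$-bilinearity, degree preservation, the freeness of $(\cT M)_m$ — has already been established in the excerpt (Proposition~\ref{tangentbasis} and the Remark following it), so the proof is essentially an assembly of those facts; the one thing to state carefully is why a germ lying in the image of the structure map from $\mathfrak{m}_m\cdot(\cT M)_m$ is recognized by having all coordinate-components in $\mathfrak{m}_m$, which is where freeness over $\cO_m$ is used.
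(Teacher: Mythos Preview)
Your argument is correct and is precisely what the paper means by ``unraveling the proof of the previous proposition'': you make explicit the map $[X]_m\mapsto X|_m=\ev_m\circ\varepsilon_m\circ X_m$ from \eqref{tgtvecfromfield}, check it annihilates $\mathfrak{m}_m\cdot(\cT M)_m$, and then use the coordinate bases from Proposition~\ref{tangentbasis} (and the discussion of $T_mM$ following it) to conclude bijectivity. There is no substantive difference in approach---you have simply written out in full what the paper leaves as a one-line remark.
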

\begin{proof}
This follows by unraveling the proof of the previous proposition.
\end{proof}

\begin{defn}
Let $\psi: M \to N$ be a morphism of $\Zn$-supermanifolds such that $\psi(m) = n$ for some point $m \in M$. The {\it tangent map} of $\psi$ at $m$ is the morphism of $\mathbb{Z}^n_2$-super vector spaces $d\psi_m: T_m M \to T_nN$ defined by
\[
d\psi_m(v)([f]_n) = v(\psi^*_m([f]_n))\;,
\]
for all $v\in T_mM$ and $[f]_n\in \cO_{N,n}$.
\end{defn}
It follows directly from this definition that
\begin{prop}\label{prop:tgtcomp}
Let $\psi: M\to N$ and $\phi:N\to S$ be two morphisms of $\Zn$-supermanifolds. Then, for any point $m\in |M|$,
\be\label{comptgtmap}
d(\phi\circ\psi)_m=d\phi_{|\psi|(m)} \circ d\psi_m \;.
\ee
\end{prop}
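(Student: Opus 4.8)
The plan is to verify \eqref{comptgtmap} by a direct computation, unraveling the definition of the tangent map on both sides and applying each expression to an arbitrary germ at the target point. This is essentially a matter of carefully matching up the pullback morphisms on stalks with the functoriality of composition of $\Zn$-supermanifold morphisms, so the argument is short but must be done with attention to which point each stalk lives over.

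First I would fix a point $m\in|M|$, set $n=|\psi|(m)\in|N|$ and $s=|\phi|(n)=|\phi\circ\psi|(m)\in|S|$, and recall that the composite morphism $\phi\circ\psi:M\to S$ has base map $|\phi|\circ|\psi|$ and pullback $(\phi\circ\psi)^*=\psi^*\circ(|\psi|_*\phi^*)$ at the level of sheaves; passing to stalks this gives the key identity $(\phi\circ\psi)^*_m=\psi^*_m\circ\phi^*_n:\cO_{S,s}\to\cO_{M,m}$, which is just the statement that taking stalks at $m$ is functorial. I would state this stalk-level composition law explicitly, since it is the only nontrivial input. Then for any $v\in T_mM$ and any germ $[g]_s\in\cO_{S,s}$, the left-hand side of \eqref{comptgtmap} applied to $v$ and evaluated on $[g]_s$ is, by the definition of the tangent map, $d(\phi\circ\psi)_m(v)([g]_s)=v\big((\phi\circ\psi)^*_m([g]_s)\big)=v\big(\psi^*_m(\phi^*_n([g]_s))\big)$.

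Next I would compute the right-hand side on the same data: $\big(d\phi_{n}\circ d\psi_m\big)(v)([g]_s)=d\phi_{n}(d\psi_m(v))([g]_s)=\big(d\psi_m(v)\big)(\phi^*_n([g]_s))=v\big(\psi^*_m(\phi^*_n([g]_s))\big)$, where the second equality uses the definition of $d\phi_n$ (noting $d\psi_m(v)\in T_nN$ so it may be fed the germ $\phi^*_n([g]_s)\in\cO_{N,n}$), and the third uses the definition of $d\psi_m$. The two expressions coincide, and since $v$ and $[g]_s$ were arbitrary this proves \eqref{comptgtmap}. I would also remark in passing that $d(\phi\circ\psi)_m$ is well-defined as a map $T_mM\to T_sS$ precisely because $(\phi\circ\psi)^*_m$ maps $\cO_{S,s}$ into $\cO_{M,m}$, so there is no domain-mismatch issue.

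There is really no serious obstacle here: the statement is pure bookkeeping once one has the functoriality of the pullback on stalks, $(\phi\circ\psi)^*_m=\psi^*_m\circ\phi^*_n$. The only thing to be careful about is notational — keeping straight the three points $m,n,s$ and making sure the germ $[g]_s$ is pushed through $\phi^*_n$ before $\psi^*_m$, in the correct order dictated by contravariance of the pullback. Since the excerpt already records the definitions of morphisms of $\Zn$-ringed spaces and of the tangent map, and since the stalk maps $\phi^*_m$ were introduced in an earlier remark, all ingredients are available, and the proof can be presented in a few lines.
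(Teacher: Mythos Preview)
Your proposal is correct and is exactly the direct verification from the definition that the paper has in mind; indeed, the paper does not spell out a proof at all but simply states that the proposition ``follows directly from this definition'' of the tangent map. Your unraveling via $(\phi\circ\psi)^*_m=\psi^*_m\circ\phi^*_n$ is precisely that immediate check.
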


\subsection{Chain rule and modified Jacobian.}

We now establish the chain rule for $\Zn$-supermanifolds and use it to relate the tangent map to the $\Zn$-graded Jacobian matrix.

\begin{prop}\label{chainrule}
Let $U^{p|{\bf q}}, V^{r|{\bf s}}$ be $\mathbb{Z}^n_2$-superdomains, with coordinates
$u^a, v^b$ respectively. Let $\psi: U^{p|{\bf q}} \to V^{r|{\bf s}}$ be a morphism of $\Zn$-supermanifolds. Then,
\[
\frac {\partial \psi^*(f)} {\partial u^a} = \sum_b \frac {\partial \psi^*(v^b)} {\partial u^a} \, \psi^* \bigg( \frac {\partial f} {\partial v^b} \bigg)
\]
for any $f \in \cO(V^{r|{\bf s}})$.
\end{prop}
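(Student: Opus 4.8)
The plan is, for fixed $a$, to regard each side of the claimed formula as an $\R$-linear map $\cO(V)\to\cO(U)$ (I abbreviate $U:=U^{p|{\bf q}}$, $V:=V^{r|{\bf s}}$), to recognise that both are \emph{derivations along $\psi^*$} of the same degree which agree on the coordinate functions $v^b$, and then to pass from coordinates to arbitrary sections by a Hausdorff-completeness argument patterned on the proof of Proposition~\ref{tangentbasis}.

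Concretely, call an $\R$-linear map $\Delta\colon\cO(V)\to\cO(U)$ a \emph{$\psi^*$-derivation of degree $\gamma\in\Zn$} if $\Delta(fg)=\Delta(f)\,\psi^*(g)+(-1)^{\langle\gamma,\deg f\rangle}\psi^*(f)\,\Delta(g)$ for homogeneous $f,g$. Put
\[
D_a:=\partial_{u^a}\circ\psi^*,\qquad E_a:=\sum_b\big(\partial_{u^a}\psi^*(v^b)\big)\cdot\big(\psi^*\circ\partial_{v^b}\big).
\]
Using that $\psi^*$ is a degree-preserving algebra morphism, that $\partial_{u^a}$ and $\partial_{v^b}$ are graded derivations of degrees $\deg u^a$ and $\deg v^b$, and commuting elements of $\cO(U)$ past one another by $\Zn$-commutativity (the factors $2\langle\deg v^b,\deg f\rangle$ vanish mod~$2$), one checks directly that $D_a$ and $E_a$ are both $\psi^*$-derivations of degree $\deg u^a$; hence so is $\Delta_a:=D_a-E_a$. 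Since $E_a(v^b)=\sum_c\big(\partial_{u^a}\psi^*(v^c)\big)\psi^*(\partial_{v^c}v^b)=\partial_{u^a}\psi^*(v^b)=D_a(v^b)$ and $D_a,E_a$ annihilate the constants, the $\psi^*$-derivation $\Delta_a$ vanishes on the $\R$-subalgebra generated by the $v^b$, i.e.\ on every polynomial in the coordinates of $V$.

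It remains to upgrade this to $\Delta_a\equiv 0$. From $\psi^*(\cJ(V)^k)\subset\cJ(U)^k$ together with the fact (see the proof of Proposition~\ref{dercont}) that graded derivations lower the $\cJ$-filtration by one, we get $\Delta_a(\cJ(V)^k)\subset\cJ(U)^{k-1}$; since any polynomial section $\mathcal P=\sum_{|\mu|\ge 0}P_\mu(x)\xi^\mu$ is the $\cJ(V)$-adic limit of its finite truncations, which are genuine polynomials and hence killed by $\Delta_a$, Hausdorff completeness of $\cO(U)$, i.e.\ $\bigcap_k\cJ(U)^k=0$, yields $\Delta_a(\mathcal P)=0$. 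Finally, fix $f\in\cO(V)$ and $m\in|U|$, $n:=|\psi|(m)$. Localising $\partial_{u^a},\partial_{v^b},\psi^*$ over $\psi$ produces an induced map $(\Delta_a)_m\colon\cO_{V,n}\to\cO_{U,m}$ with $(\Delta_a)_m(\mathfrak m_n^k)\subset\mathfrak m_m^{k-1}$: indeed $\psi^*_m$ respects maximal ideals, and a graded derivation sends $\mathfrak m^k$ into $\mathfrak m^{k-1}$ by the induction carried out at the end of the proof of Proposition~\ref{tangentbasis}. For each $k$, polynomial approximation \cite[Theorem~6.10]{Covolo:2016} provides a polynomial section $\mathcal P$ with $[f-\mathcal P]_n\in\mathfrak m_n^{k+1}$, so $[\Delta_a(f)]_m=[\Delta_a(f-\mathcal P)]_m\in\mathfrak m_m^{k}$; as $k$ is arbitrary and $\bigcap_k\mathfrak m_m^k=0$, $[\Delta_a(f)]_m=0$. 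Since $m$ was arbitrary and $\cO_U$ is a sheaf, $\Delta_a(f)=0$, which is precisely the asserted chain rule.

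The sign bookkeeping needed to verify that $D_a$ and $E_a$ obey the same twisted Leibniz rule is routine, being formally identical to the super ($n=1$) case. The main obstacle is the last step: because $\cJ$ is not nilpotent one cannot conclude by a finite induction, so one is forced into the two-stage completeness argument — first $\cJ$-adically, to reduce to polynomial sections, and then $\mathfrak m_m$-adically at the stalks via the polynomial approximation theorem.
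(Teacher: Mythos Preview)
Your proof is correct and follows essentially the same strategy as the paper's: show that the difference of the two sides is a ``derivation along $\psi^*$'' that kills coordinates, hence polynomials, then use $\cJ$-adic continuity to reach polynomial sections and finally polynomial approximation at stalks together with $\bigcap_k\mathfrak m_m^k=0$ to reach arbitrary sections. The paper's version is terser (it simply calls $D-D'$ a ``graded derivation'' and invokes Proposition~\ref{dercont} without spelling out that one is dealing with maps $\cO(V)\to\cO(U)$ rather than endomorphisms), whereas you make the notion of $\psi^*$-derivation and the filtration estimate $\Delta_a(\cJ(V)^k)\subset\cJ(U)^{k-1}$, $(\Delta_a)_m(\mathfrak m_n^k)\subset\mathfrak m_m^{k-1}$ explicit; this is a worthwhile clarification but not a different method.
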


\begin{proof}
Let $f \in \cO(V^{r|{\bf s}})$,
\beas
D(f) := \frac {\partial \psi^*(f)} {\partial u^a}\;,
&& D'(f) := \sum_b \frac {\partial \psi^*(v^b)} {\partial u^a} \, \psi^* \bigg( \frac {\partial f} {\partial v^b} \bigg)\;,
\eeas
and consider the graded derivation $D - D'$. Clearly, $D - D'$ annihilates all polynomials in the coordinate functions $v^b$, as it is a graded derivation. By Proposition \ref{dercont}, $D - D'$ annihilates all polynomial sections on $V^{r|{\bf s}}$. Let $f \in \cO(V^{r|{\bf s}})$ and $m$ be any point in $V^{r|{\bf s}}$. By polynomial approximation \cite[Thm. 6.10]{Covolo:2016}, for any $k$ there exists a polynomial section $\mathcal{P}$ such that $[f]_m - [\mathcal{P}]_m \in \mathfrak{m}^k_m$. Since this is true for any $k$, we see that $[(D - D')f]_m = 0$, and since $m \in V^{r|{\bf s}}$ was arbitrary, we conclude that $(D - D')f = 0$.
\end{proof}

In particular, this implies that
\beas
d\psi_m\;(\partial_{u^a}\vert_m)
&=&
\sum_{b} \left(\partial_{u^a}\psi^*(v^b)\vert_m\right) \partial_{v^b}\vert_{|\psi|(m)}\;,
\eeas
so that the tangent map of $\psi$ at $m$ is the linear map characterized in the bases of its source and target tangent spaces induced by the coordinates $u$ and $v$ respectively, by the \emph{block-diagonal} $\Zn$-graded matrix
\be\label{MatTang}
 J_{\psi,m} := \left( \partial_{u^a}v^b|_m \right)_{ba}\;.
\ee
Note that, as in supergeometry, we simply wrote $v=v(u)$ instead of $\psi^*(v)$. The preceding result is a consequence of the above definitions and conventions, provided one remembers that $\varepsilon_m$ and $\psi^*_m$ commute \cite{Covolo:2016} and that $\op{ev_m}\psi^*_m=\op{ev}_{|\psi|(m)}$. As for the statement that $J_{\psi,m}$ is block-diagonal, note that if a non-diagonal block is non-zero, there exists a derivative $\p_{u^a}v^b$, with $\deg(v^b)\neq\deg(u^a)$, which contains a term without formal parameter. Hence, the corresponding term in $v^b$ contains the unique parameter $u^a$ and its degree is thus $\deg(v^b)=\deg(u^a)$, what is a contradiction.

\medskip
When considering the counterpart of \eqref{comptgtmap} in matrix form, we remark that an additional sign appears. Indeed, for $\Zn$-morphisms $\psi:M\to N$ and $\phi:N\to S$, which read locally respectively as $v=v(u)$ around $m$ and $w=w(v)$ around $|\psi|(m)$, one has
$$
\partial_{u^b}w^a= \sum_c \partial_{u^b}v^c \, \partial_{v^c}w^a
= \sum_c  (-1)^{\langle \deg(u^b)+\deg(v^c),\,\deg(v^c)+\deg(w^a)\rangle} \partial_{v^c}w^a \, \partial_{u^b}v^c \;.
$$

To absorb the redundant sign, we must consider the following
\begin{defn}
The \emph{$\Zn$-graded Jacobian matrix} of a local $\Zn$-morphism $\psi$ between $\Zn$-domains $U$ and $V$, given by $v=v(u)$, is the degree $0$ graded matrix
\be\label{GradModJac}
\op{Jac}_\psi:= \left( (-1)^{\langle \deg(v^b)+\deg(u^a),\,\deg(u^a) \rangle} \partial_{u^a}v^b\right)_{ba}\;.
\ee
\end{defn}

We then have the following familiar result.
\begin{prop}\label{JacobianComposition}
The graded Jacobian matrix of a composition of morphisms is the product of the graded Jacobians of the individual morphisms, i.e.,
\[
\op{Jac}_{\psi \circ \phi} = \op{Jac}_\psi \cdot \op{Jac}_\phi\;,
\]
or, more precisely,
\[
\op{Jac}_{\psi \circ \phi} = \phi^*(\op{Jac}_\psi) \cdot \op{Jac}_\phi\;
\]
\end{prop}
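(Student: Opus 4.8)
\emph{Plan of proof.} The plan is to reduce everything to the chain rule (Proposition~\ref{chainrule}) together with careful sign bookkeeping, working in local coordinate charts throughout: write $\phi: U\to V$ as $v=v(u)$ and $\psi: V\to W$ as $w=w(v)$, so that $\psi\circ\phi: U\to W$ is $w=w(u)$. First I would apply Proposition~\ref{chainrule} to the morphism $\phi$ with the function $f=\psi^*(w^c)\in\cO(V)$; since $\phi^*\circ\psi^*=(\psi\circ\phi)^*$, this gives, with the usual abbreviations (so that $\partial_{u^a}v^b$ stands for $\partial_{u^a}\phi^*(v^b)$, $\partial_{v^b}w^c$ for $\partial_{v^b}\psi^*(w^c)$, and $\partial_{u^a}w^c$ for $\partial_{u^a}(\psi\circ\phi)^*(w^c)$), the identity
\[
\partial_{u^a}w^c=\sum_b (\partial_{u^a}v^b)\cdot\phi^*(\partial_{v^b}w^c)\;.
\]
Observe that here the factor coming from $\op{Jac}_\phi$ stands on the left and the one coming from $\op{Jac}_\psi$ on the right, which is the opposite order to the one appearing in the matrix product $\phi^*(\op{Jac}_\psi)\cdot\op{Jac}_\phi$.

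Next I would compute the $(c,a)$-entry of the graded matrix product $\phi^*(\op{Jac}_\psi)\cdot\op{Jac}_\phi$ directly from Definition~\eqref{GradModJac}, using that graded matrices multiply via the ordinary formula $(AB)_{ca}=\sum_b A_{cb}B_{ba}$ — the sign conventions being by design entirely absorbed into \eqref{GradModJac} — and that $\phi^*$ is a degree-$0$ algebra morphism, hence commutes with the scalar signs. This produces
\[
\sum_b (-1)^{\langle\deg(w^c)+\deg(v^b),\,\deg(v^b)\rangle}(-1)^{\langle\deg(v^b)+\deg(u^a),\,\deg(u^a)\rangle}\,\phi^*(\partial_{v^b}w^c)\cdot\partial_{u^a}v^b\;.
\]
Then I would use $\Zn$-commutativity to reorder $\phi^*(\partial_{v^b}w^c)\cdot\partial_{u^a}v^b$ into $\partial_{u^a}v^b\cdot\phi^*(\partial_{v^b}w^c)$, at the cost of the sign $(-1)^{\langle\deg(v^b)+\deg(w^c),\,\deg(u^a)+\deg(v^b)\rangle}$, since $\partial_{u^a}v^b$ has degree $\deg(u^a)+\deg(v^b)$ and $\phi^*(\partial_{v^b}w^c)$ has degree $\deg(v^b)+\deg(w^c)$ (recall $\deg\partial_{u^a}=\deg u^a$ in $\Zn$).

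The decisive point is the resulting sign. Writing $p=\deg(u^a)$, $q=\deg(v^b)$, $r=\deg(w^c)$, the accumulated exponent is
\[
\langle r+q,q\rangle+\langle q+p,p\rangle+\langle q+r,p+q\rangle\;,
\]
and, expanding by $\Z_2$-bilinearity of $\langle\cdot,\cdot\rangle$, every term involving $q$ cancels modulo $2$, leaving $\langle p+r,p\rangle=\langle\deg(w^c)+\deg(u^a),\,\deg(u^a)\rangle$, which no longer depends on $b$. Pulling this common scalar out of the sum and invoking the chain-rule identity from the first step to recognize $\sum_b(\partial_{u^a}v^b)\cdot\phi^*(\partial_{v^b}w^c)=\partial_{u^a}w^c$, I obtain that the $(c,a)$-entry of $\phi^*(\op{Jac}_\psi)\cdot\op{Jac}_\phi$ equals $(-1)^{\langle\deg(w^c)+\deg(u^a),\,\deg(u^a)\rangle}\,\partial_{u^a}w^c$, which is exactly the $(c,a)$-entry of $\op{Jac}_{\psi\circ\phi}$. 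The main obstacle is precisely this sign cancellation: one has to verify that the $b$-dependent part of the exponent vanishes so that the scalar may be factored through the summation — this is exactly the cancellation that the modified sign in \eqref{GradModJac} was introduced to guarantee.
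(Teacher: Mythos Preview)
Your proposal is correct and follows precisely the approach the paper itself takes: the paper carries out exactly this chain-rule-plus-sign-bookkeeping computation in the paragraph immediately preceding the definition of $\op{Jac}_\phi$ (the displayed identity $\partial_{u^b}w^a=\sum_c(-1)^{\langle\deg(u^b)+\deg(v^c),\,\deg(v^c)+\deg(w^a)\rangle}\partial_{v^c}w^a\,\partial_{u^b}v^c$), uses it to motivate the modified sign in \eqref{GradModJac}, and then states the proposition as a ``familiar result'' without a separate proof. Your write-up simply makes explicit the sign cancellation that the paper leaves to the reader, and your verification that the $q$-dependent terms drop out modulo~$2$ is correct.
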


\begin{rema}\label{rem:tgmap-Jac}
Notice again that the Jacobian matrix $\op{Jac}_\psi$ of a $\Zn$-morphism $\psi$ is the usual Jacobian matrix with an \emph{extra sign} that depends on the considered entry (see Equation \ref{GradModJac}). On the other hand, the matrix $J_{\psi,m}$ of the tangent map $d\psi_m$ of a $\Zn$-morphism $\psi$ is the usual Jacobian matrix with derivatives that are (projected onto the base by $\varepsilon$ and then) evaluated at $m$ (see Equation \ref{MatTang}). As mentioned already above, the projection transforms the Jacobian matrix into a block-diagonal one. Observe that the difference in sign between $\op{Jac}_\psi$ and $J_{\psi,m}$ disappears for the diagonal blocks (see Equation \ref{GradModJac}). Therefore, the matrix $J_{\psi,m}$ can be viewed as the (projection and) evaluation at $m$ of the matrix $\op{Jac}_\psi$:
\be\label{tgmapmtrx}
J_{\psi,m}= \op{Jac}_{\psi}\big|_m =
		\left(
			\begin{array}{c|c|c|c}
					B_{\gamma_0} & & & \\
					\hline
					& B_{\gamma_1} & & \\
					\hline
					& & \ddots & \\
					\hline
					& & & B_{\gamma_{2^n-1}}
			\end{array}					
		\right),\quad B_{\gamma_i}=\left(\p_{u^a}v^b|_m\right)_{ba},\;\text{with}\; \deg(u^a)=\deg(v^b)=\gamma_i\in\Zn\;.
\ee
Moreover, the coordinates $u$ and $v$ are ordered according to the standard ordering, so that the $\gamma_i\in\Zn$ are ordered similarly, i.e., they run first through the even degrees ordered lexicographically, then through the odd ones ordered also lexicographically.
\end{rema}

\section{Products of $\Zn$-supermanifolds}\label{sec:product}

The category of $\Zn$-supermanifolds admits finite products:

\begin{prop}
Let $M_i$, $i\in\{1,2\}$, be $\Zn$-supermanifolds. Then there exists a $\Zn$-supermanifold $M_1 \times M_2$ and $\Zn$-morphisms $\pi_i: M_1 \times M_2 \to M_i$ (with underlying smooth manifold $|M_1\times M_2|=|M_1|\times |M_2|$ and with underlying smooth morphisms $|\pi_i|:|M_1|\times|M_2|\to|M_i|$ given by the canonical projections), such that for any $\Zn$-supermanifold $N$ and $\Zn$-morphisms $f_i: N \to M_i$, there exists a unique morphism $h:=\langle f_1,f_2\rangle: N \to M_1 \times M_2$ making the diagram

\begin{equation*}
\begin{tikzcd}
N \arrow[bend left]{drr}{f_1}
\arrow[bend right]{ddr}[swap]{f_2}
\arrow[dotted]{dr}{h=\langle f_1,f_2\rangle} & &\\
& M_1 \times M_2 \arrow{r}{\pi_1} \arrow{d}{\pi_2} & M_1\\
&M_2
\end{tikzcd}
\end{equation*}

\noindent commute.
\end{prop}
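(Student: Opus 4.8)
The plan is to construct $M_1 \times M_2$ explicitly as a locally $\Zn$-ringed space, verify it is a $\Zn$-supermanifold, build the projections, and then check the universal property by gluing local data.

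\textbf{Construction.} First I would take the topological space to be the ordinary product $|M_1| \times |M_2|$ with the product topology, which carries its usual smooth manifold structure of dimension $p_1 + p_2$ if $\dim(M_i) = p_i|\vect{q}_i$. For the structure sheaf, I would work locally: if $U_i \subset M_i$ are coordinate domains with $\cO_{M_i}(U_i) \cong \Ci(\Omega_i)[[\xi_{(i)}]]$ for open $\Omega_i \subset \R^{p_i}$, then on $U_1 \times U_2$ I would set the structure sheaf to be (the sheafification of) $\Ci(\Omega_1 \times \Omega_2)[[\xi_{(1)}, \xi_{(2)}]]$, i.e. formal power series in the disjoint union of the two families of graded coordinates, with smooth coefficients on the product base, subject to the sign rule \eqref{signrule}. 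One then checks these local models glue over overlaps: on $U_1 \cap U_1'$ and $U_2 \cap U_2'$ the coordinate changes are given by morphisms, which by functoriality and the closed monoidal structure extend to a morphism of the completed tensor-product algebras, and these are compatible on triple overlaps. This yields a sheaf $\cO_{M_1 \times M_2}$ of $\Zn$-commutative $\R$-algebras, locally isomorphic to a $\Zn$-superdomain of dimension $(p_1+p_2)|(\vect{q}_1 + \vect{q}_2)$; the stalks are local graded rings since they are locally of the standard form. Define $\pi_i$ to have underlying map the canonical projection, and $\pi_i^*$ to be, locally, the inclusion $\cO_{M_i}(U_i) \hookrightarrow \cO_{M_1\times M_2}(U_1 \times U_2)$ of the power series in one family of coordinates into the power series in both; these are compatible with restrictions and glue to genuine $\Zn$-morphisms.

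\textbf{Universal property.} Given $f_i : N \to M_i$, the underlying map $|h|$ must be $\langle |f_1|, |f_2|\rangle : |N| \to |M_1| \times |M_2|$, which is continuous (and smooth) by the classical product property; this is forced by $|\pi_i| \circ |h| = |f_i|$. For $h^*$, I would construct it locally: restricting to $|h|^{-1}(U_1 \times U_2) = |f_1|^{-1}(U_1) \cap |f_2|^{-1}(U_2) =: W$, a morphism $\cO_{M_1\times M_2}(U_1\times U_2) \to \cO_N(W)$ compatible with the $\pi_i^*$ must send $\xi_{(1)}^a \mapsto f_1^*(\xi_{(1)}^a)$, $\xi_{(2)}^b \mapsto f_2^*(\xi_{(2)}^b)$, and the base coordinates of $\Omega_i$ to the corresponding components of $f_i^*$. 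Since $\cO_N$ is $\cJ$-adically Hausdorff complete (Proposition~6.9 of \cite{Covolo:2016}) and every $f_i^*(\xi^a) \in \cJ(W)$, any formal power series in the $\xi$'s maps to a convergent series in $\cO_N(W)$, so this assignment extends uniquely to an algebra morphism $h^*_{U_1\times U_2}$ by continuity — here I would invoke the fundamental role of $\cJ$-adic completeness just as in the classical super case. The block-diagonal / degree considerations guarantee the map respects the $\Zn$-grading. Uniqueness of $h^*$ on each patch follows because its values on coordinates are forced; the local pieces then glue (again by uniqueness on overlaps) to a global $h^*$, and one checks $\pi_i \circ h = f_i$ and that $h$ is the only such morphism.

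\textbf{Main obstacle.} The substantive point, and the one I would be most careful about, is the well-definedness and uniqueness of $h^*$ as an algebra morphism out of a completed (rather than polynomial) ring: one must check that substituting the nilpotent-or-not sections $f_i^*(\xi)$ into an infinite formal power series produces a $\cJ(W)$-adically convergent — hence well-defined — element of $\cO_N(W)$, and that the resulting map is a morphism of $\Zn$-commutative algebras (respecting the sign rule) and is the unique continuous extension of the forced assignment on generators. This is exactly where $\Zn$-geometry differs from ordinary supergeometry: the coordinates $\xi$ are not nilpotent, so one genuinely needs Hausdorff completeness of the $\cJ$-adic topology and the $\cJ$-adic continuity machinery (Lemma \ref{Iadiccont}, Proposition \ref{dercont} and Remark \ref{ContRem}) rather than finiteness of truncations. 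The gluing of local structure sheaves and of the local $h^*$'s is then routine sheaf theory, as is the verification that $|M_1\times M_2|$ has the claimed smooth structure.
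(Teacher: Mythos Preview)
The paper does not actually prove this proposition: its entire proof reads ``See \cite{Bruce:2019}.'' So there is no in-paper argument to compare against; the result is imported wholesale from the companion paper on products of $\Zn$-manifolds.

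Your sketch is a correct outline and is, in spirit, exactly what the cited reference carries out. Two remarks. First, the step you flag as the ``main obstacle'' --- extending the forced assignment on coordinates to an honest $\Zn$-algebra morphism out of the completed power-series ring --- is precisely the content of the fundamental theorem of $\Zn$-morphisms \cite[Theorem~6.8]{Covolo:2016} (also invoked repeatedly in this paper), so you can and should cite it directly rather than redevelop the $\cJ$-adic convergence argument by hand; that theorem gives both existence and uniqueness of $h$ from the coordinate pullbacks, which immediately yields the local uniqueness you need for gluing. Second, the gluing of the local structure sheaves $\Ci(\Omega_1\times\Omega_2)[[\xi_{(1)},\xi_{(2)}]]$ is slightly more delicate than you indicate: one must check that the transition maps, which are induced by pairs of coordinate changes on the factors, really define isomorphisms of the \emph{completed} tensor algebras (not just of polynomial truncations), and that this is compatible with the product-of-opens base. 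This is again handled by the fundamental theorem, but it is worth saying so explicitly; the cited paper \cite{Bruce:2019} treats these sheaf-theoretic points with some care.
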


Of course, such a product is unique up to unique isomorphism.

\begin{proof} See \cite{Bruce:2019}.\end{proof}

\begin{cor} The category $\Zn\text{-}\mathtt{Man}$ admits finite products.\end{cor}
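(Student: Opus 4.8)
The plan is to reduce the statement about finite products in $\Zn\text{-}\mathtt{Man}$ to the binary case already established in the preceding Proposition. The corollary asserts that $\Zn\text{-}\mathtt{Man}$ has all finite products; by a standard categorical argument it suffices to exhibit a terminal object and binary products, since then products of any finite (including empty) family exist by associativity of the binary product up to canonical isomorphism.

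\textbf{Step 1: The terminal object.} First I would observe that the $\Zn$-supermanifold $\mathbb{R}^{0|\mathbf{0}}=(\{*\},\R)$ — a point with structure sheaf $\R$ — is terminal in $\Zn\text{-}\mathtt{Man}$. Given any $\Zn$-supermanifold $N$, the base map to the one-point space is unique, and the sheaf morphism $\R\to|N|_*\cO_N$ must be the unit-preserving $\R$-algebra map, which is likewise unique. Hence $\Hom(N,\mathbb{R}^{0|\mathbf{0}})$ is a singleton. This furnishes the empty product.

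\textbf{Step 2: From binary to finite products.} Next I would invoke the general fact (Mac Lane \cite{MacLane1998}) that a category possessing a terminal object and all binary products possesses all finite products: the $k$-fold product $M_1\times\cdots\times M_k$ is constructed by iterating the binary product, e.g. as $(\cdots((M_1\times M_2)\times M_3)\cdots\times M_k)$, and the universal property for a cone $(f_i\colon N\to M_i)_{i=1}^k$ follows by repeated application of the universal property of the binary product proved above, with uniqueness of the induced morphism $h=\langle f_1,\dots,f_k\rangle$ inherited at each stage. The case $k=1$ is trivial and $k=0$ is Step 1.

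\textbf{Main obstacle.} There is essentially no obstacle here: the binary product in the preceding Proposition does all the real work (its construction, deferred to \cite{Bruce:2019}, is where the genuine $\Zn$-geometric content lies — building the structure sheaf on $|M_1|\times|M_2|$ and checking the universal property against arbitrary test supermanifolds $N$, not merely against $\zL$-points). The only point deserving a word of care is that verifying $\mathbb{R}^{0|\mathbf{0}}$ is terminal requires knowing that a $\Zn$-morphism into a superdomain of this type is determined by where it sends coordinates — but there are none, so the claim is immediate. Accordingly the proof is a one-line appeal to the binary case plus the terminal object, and I would present it as such.

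\begin{proof}
By the preceding Proposition, $\Zn\text{-}\mathtt{Man}$ admits binary products. Moreover, the $\Zn$-supermanifold $\mathbb{R}^{0|\mathbf{0}}=(\{*\},\R)$ is a terminal object: for any $\Zn$-supermanifold $N$, the base map $|N|\to\{*\}$ is unique, and the associated sheaf morphism $\R\to|N|_*\cO_N$ is forced to be the unital $\R$-algebra morphism, hence also unique, so $\Hom_{\Zn\mathtt{Man}}(N,\mathbb{R}^{0|\mathbf{0}})$ is a singleton. A category with a terminal object and binary products has all finite products (see \cite{MacLane1998}): the product of a finite family $(M_i)_{i=1}^k$ is obtained by iterating the binary product, with the universal property following from repeated application of that of the binary product, while the empty product is the terminal object $\mathbb{R}^{0|\mathbf{0}}$. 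Hence $\Zn\text{-}\mathtt{Man}$ admits finite products.
\end{proof}
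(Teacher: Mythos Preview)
Your proposal is correct and matches the paper's approach: the paper states the corollary immediately after the binary-product proposition and gives no proof at all, treating it as an immediate consequence. Your argument (terminal object $\mathbb{R}^{0|\mathbf{0}}$ plus iteration of the binary product, citing \cite{MacLane1998}) is exactly the standard justification the paper leaves implicit, so there is nothing to add.
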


\section{Local forms of morphisms}

\subsection{Inverse function theorem} \label{ssec:IFT}

We begin with the $\Zn$-analogue of the inverse function theorem, which plays as fundamental a role in $\Zn$-supergeometry as the classical version does in ungraded geometry.

\begin{thm}\label{InvFunThm}
Let $\phi: M \to N$ be a morphism of $\Zn$-supermanifolds, and $m \in |M|$ be a point. Then the following are equivalent:
\begin{enumerate}
\item $d\phi_m$ is invertible,
\item there exist coordinate charts $U$ about $m$ and $V$ about $|\phi|(m)$ such that $\phi|_U: U \to V$ is a $\Zn$-diffeomorphism.
\end{enumerate}
\end{thm}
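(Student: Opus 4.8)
The plan is to prove the two implications separately, with $(2)\Rightarrow(1)$ being the easy direction and $(1)\Rightarrow(2)$ requiring real work. For $(2)\Rightarrow(1)$: if $\phi|_U:U\to V$ is a $\Zn$-diffeomorphism with inverse $\psi$, then $\psi\circ\phi|_U=\id_U$ and $\phi|_U\circ\psi=\id_V$; applying Proposition \ref{prop:tgtcomp} (functoriality of the tangent map) and the obvious fact that $d(\id)_m=\id$, we get $d\psi_{|\phi|(m)}\circ d\phi_m=\id$ and $d\phi_m\circ d\psi_{|\phi|(m)}=\id$, so $d\phi_m$ is invertible. Note also that restricting $\phi$ to an open subset does not change the tangent map at $m$, since the tangent space depends only on the stalk $\cO_m$.

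For $(1)\Rightarrow(2)$, the strategy is to reduce to $\Zn$-superdomains and then to exploit the block-diagonal structure of $J_{\phi,m}$ from Remark \ref{rem:tgmap-Jac} together with Hausdorff completeness of $\cO$ in the $\cJ$-adic topology. First I would choose coordinate charts so that $\phi$ is a morphism $U^{p|\vect{q}}\to V^{p|\vect{q}}$ between superdomains (the source and target dimensions must agree since $d\phi_m$ is invertible), writing it locally as $v^b=v^b(u)=\phi^*(v^b)$. By composing with a translation we may assume $m$ and $|\phi|(m)$ are the origins. The matrix $J_{\phi,m}=\op{Jac}_\phi|_m$ is block-diagonal with blocks $B_{\gamma_i}=(\p_{u^a}v^b|_m)$ over each degree $\gamma_i$, and invertibility of $d\phi_m$ means each block $B_{\gamma_i}$ is an invertible ordinary matrix. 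After a linear change of coordinates in the target (which is a $\Zn$-diffeomorphism and is degree-preserving block by block), we may assume $J_{\phi,m}=\Id$, i.e. $\p_{u^a}v^b|_m=\delta^b_a$ for $a,b$ of the same degree.

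Now I would construct the inverse morphism $\psi$ by exhibiting coordinate functions $w^a=\psi^*(u^a)\in\cO(V')$ on a suitable neighborhood satisfying $\phi^*(v^b)(w)=v^b$ and $\psi^*(u^a)$ such that the composite is the identity; equivalently, one solves the fixed-point/implicit equation for the inverse power series. Decompose each $v^b(u)$ as its classical part (the degree-zero, $\xi$-free part, which after our normalization is a smooth map $\R^p\supset\to\R^p$ with invertible ordinary Jacobian at the origin, so the classical inverse function theorem applies) plus higher $\cJ$-order corrections. Then build the inverse order by order in the $\cJ$-adic filtration: the zeroth approximation is the classical inverse, and each successive correction is determined uniquely by requiring the composite to agree with the identity modulo $\cJ^{k+1}$. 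Because $\cO$ is $\cJ$-adically Hausdorff complete (Proposition 6.9 of \cite{Covolo:2016}), this Cauchy sequence of partial inverses converges to a genuine section, giving well-defined functions $w^a$, hence a morphism $\psi$ of ringed spaces on a possibly shrunken neighborhood. One then checks $\psi\circ\phi=\id$ and $\phi\circ\psi=\id$ as morphisms, again using $\cJ$-adic continuity of the relevant algebra maps (Lemma \ref{Iadiccont}) so that the identities, which hold on polynomial sections by construction, extend to all of $\cO$.

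The main obstacle is the convergence/construction step: unlike the classical super case where $\cJ$ is nilpotent and the inverse is literally a polynomial expression in the odd coordinates, here $\cJ$ is not nilpotent, so the inverse is a genuine formal power series and one must argue carefully that the $\cJ$-adic approximations form a Cauchy sequence and that their limit actually defines a morphism — this is exactly where Hausdorff completeness and the $\cJ$-adic continuity of derivations and pullbacks (Propositions \ref{dercont} and the sheaf-level completeness statement) do the heavy lifting. A secondary subtlety is keeping track of the $\Zn$-degrees throughout, in particular verifying that the constructed $w^a$ have the correct degrees so that $\psi$ is a bona fide $\Zn$-morphism; this follows from the degree-homogeneity of each step, but should be stated explicitly.
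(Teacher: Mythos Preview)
Your proposal is correct and follows the same overall architecture as the paper: reduce to $\Zn$-superdomains, exploit the block-diagonal form of $J_{\phi,m}$ together with the classical inverse function theorem on the base, and use $\cJ$-adic Hausdorff completeness to manufacture the inverse as a formal series. The execution differs in two notable ways.

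First, the paper normalizes more aggressively on the \emph{source}: it uses the classical IFT to replace $x$ by $\widetilde{x}=f_0(x)$ and the invertible matrices $B_\mu$ to replace $\xi_\mu$ by $\widetilde{\xi}_\mu = B_\mu(x)\,\xi_\mu$, so that after renaming, $\phi^*(y^r)=x^r+S^r$ and $\phi^*(\eta^s_\mu)=\xi^s_\mu+\Sigma^s_\mu$ with $S,\Sigma\in\cJ^2(U)$. Your linear change in the target only achieves $J_{\phi,m}=\Id$ at the single point $m$, leaving the $x$-dependent linear-in-$\xi$ coefficients $g^s_{\mathsc{E}(\mu,k)}(x)$ nonconstant; this is still workable by your successive-approximation scheme, but the bookkeeping is heavier than in the paper's setup where $\phi^*\circ\psi^*=\id+\mathfrak{S}$ with $\mathfrak{S}:\cJ^k\to\cJ^{k+1}$.

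Second, rather than an inductive Cauchy construction, the paper writes the inverse of $\id+\mathfrak{S}$ explicitly as the Neumann series $\iota^*=\sum_{k\ge 0}(-1)^k\mathfrak{S}^k$, observes this is a well-defined element of $\cO(U)$ termwise (each monomial $\xi^\alpha$ receives only finitely many contributions), and thus obtains a \emph{left} inverse $\lambda=\iota\circ\psi$ of $\phi$. It then avoids your step of checking both composites by a neat trick: since $d\lambda_{|\phi|(m)}$ is invertible, the same argument gives $\lambda$ a left inverse, which is forced to be $\phi$. This replaces your appeal to polynomial approximation and $\cJ$-adic continuity for the second identity. Both routes are valid; the paper's is more self-contained and sidesteps the need to track degrees through an inductive construction.
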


\begin{proof}
That $\phi$ being a $\Zn$-diffeomorphism around $m$ implies invertibility of $d\phi_m$ is an easy consequence of the chain rule.

Now let us suppose that $d\phi_m$ is invertible. In particular, this implies $\dim(M) = \dim(N)$. As the statement is local, we may assume from the beginning that $M$ and $N$ are $\Zn$-superdomains $U^{p|{\bf q}}$ and $V^{p|{\bf q}}$ respectively, and that $m = 0$. Let $\mu \in \Zn \backslash \{0\}$ and let $(x, \xi)$ and $(y, \eta)$ be coordinates on $U$ and $V$. In order to keep track of the $\Zn$-degrees of the coordinates, we have introduced in Section \ref{sec:prelim} the notation $\xi^k_\mu$ to indicate a coordinate of $\Zn$-degree $\mu$.

By the Chart Theorem for $\Zn$-supermanifolds \cite[Thm. 6.8]{Covolo:2016}, we have:
\begin{align}\label{phimorph}
&\phi^*(y^r) = f_0^r(x)+\sum_{\substack{|P| \ge 2\\ \deg(\xi^P)=0}} f_P^r(x)\, \xi^P\; ,\\
&\phi^*(\eta^s_\mu) = \sum_{\substack{|Q| \geq 1 \\ \deg(\xi^Q)=\mu}} g_{Q}^s(x)\, \xi^Q
										= \sum_{k=1}^{q_{\mu}} g_{\mathcs{E}(\mu,k)}^s(x) \,\xi^k_{\mu} + \ldots\label{phimorph2}
\end{align}
where, for $\mu$ the $i$-th nonzero degree in $\Zn$ following the standard order, $\mathsc{E}(\mu,k)=(0,\ldots, 1, \ldots, 0)$ with $1$ at the entry $q_1+ \ldots + q_{i-1}+k$.
Then, recalling the matrix form of the differential of $\phi$ at $m$ \eqref{tgmapmtrx}, the hypothesis that the differential be invertible is equivalent to the assumption that the block matrices

\begin{align*}
B_0
= \begin{pmatrix}
\frac {\partial f^1_0} {\partial x^1} & \dotsc & \frac {\partial f^1_0} {\partial x^p} \\
\vdots & & \vdots\\
\frac {\partial f^p_0} {\partial x^1} & \dotsc & \frac {\partial f^p_0} {\partial x^p}
\end{pmatrix}
\hspace{4mm}\text{and}\hspace{4mm}
B_\mu = \begin{pmatrix}
g^1_{\mathsc{E}(\mu,1)} & \dotsc & g^1_{\mathsc{E}(\mu, q_\mu)}  \\
\vdots & & \vdots\\
g^{q_\mu}_{\mathsc{E}(\mu,1)} & \dotsc & g^{q_\mu}_{\mathsc{E}(\mu, q_\mu)}
\end{pmatrix}
\end{align*}\\

\noindent are invertible for all $\mu \in \Zn \backslash \{0\}$. By the classical inverse function theorem, there exists an open neighborhood of $0$ ($m=0$) in $U$, such that the $\{ f^i_0 \}$ define new coordinates of degree $0$, denoted by $\widetilde{x}$ (we will denote this neighborhood by $U$, thus shrinking the original $U$ (and $V$) if necessary). Furthermore, using the invertible matrices $B_{\mu}$ we can also change the coordinates of each nonzero degree $\mu$, by
$$
\widetilde{\xi}^a_{\mu}:= \sum_{b} g^a_{\mathsc{E}(\mu,b)}(x) \,\xi^b_{\mu}\;,
$$
finally obtaining a new system of coordinates $(\widetilde{x}, \widetilde{\xi}\,)$ near the considered point.

Reading the morphism $\phi$, see \eqref{phimorph} and \eqref{phimorph2}, in the new coordinates $(\widetilde{x}, \widetilde{\xi}\,)$ and identifying these with the original coordinates $(x,\xi)$ to simplify notation, we get
\begin{align*}
&\phi^*(y^r) = x^r + \sum_{|P| \geq 2} \widetilde{f}^r_P(x)\,\xi^P=x^r + S^r(x,\xi)\; ,\\
&\phi^*(\eta^s_\mu) = \xi^s_\mu + \sum_{|Q| \geq 2} \widetilde{g}^s_{Q} (x)\,\xi^Q = \xi^s_\mu + \Sigma_\mu^s(x,\xi)\; ,
\end{align*}
where $S^r(x,\xi),\Sigma_\mu^s(x,\xi)\in\mathcal{J}^2(U)$ (in view of the identification, we have $V=U$).

Define now a $\Zn$-morphism $\psi:V^{p|\mathbf{q}}\to U^{p|\mathbf{q}}$ by setting
$$
\psi^*(x^i)=y^i\in\cO^0(U)\quad\text{and}\quad\psi^*(\xi^j_\mu)=\eta^j_\mu\in\cO^{\mu}(U)\;.
$$
The $\Zn$-morphism $\psi\circ\phi:U^{p|\mathbf{q}}\to U^{p|\mathbf{q}}$ is characterized by the pullbacks
$$
\phi^*(\psi^*(x^i))=x^i+S^i(x,\xi)\quad\text{and}\quad \phi^*(\psi^*(\xi^j_\mu))=\xi^j_\mu+\Sigma_\mu^j(x,\xi)\;.
$$
For any $\Zn$-function $h(x,\xi)=\sum_\alpha h_\alpha(x)\xi^\alpha\in\cO(U)$, the pullback by $\psi\circ\phi$ is
$$
(\phi^*\circ\psi^*)(h(x,\xi))= \sum_\alpha h_\alpha(x+S)(\xi+\Sigma)^\alpha = \sum_\alpha\;\sum_\beta\frac{1}{\beta!}\,(\p_x^\beta h_\alpha)(x)\,S^\beta(x,\xi)\;(\xi+\Sigma)^\alpha\;,
$$
where $S^\beta(x,\xi)\in\mathcal{J}^{2|\beta|}(U)$. Notice that
$$
(\xi+\Sigma)^\alpha= \xi^\alpha + {\tt S}_\alpha(x,\xi)\quad\text{and}\quad \sum_\beta\ldots = h_\alpha(x)+\sum_{|\beta|\ge 1}{\sf S}_{\alpha\beta}(x,\xi)\;,
$$
where ${\tt S}_\alpha(x,\xi)\in\mathcal{J}^{|\alpha|+1}(U)$ and ${\sf S}_{\alpha\beta}(x,\xi)\in\mathcal{J}^{2|\beta|}(U)$. Hence,
$$
(\phi^*\circ\psi^*)(h(x,\xi)) = h(x,\xi) + \sum_\alpha h_\alpha(x)\,{\tt S}_\alpha(x,\xi)+\sum_\alpha\sum_{|\beta|\ge 1}{\sf S}_{\alpha\beta}(x,\xi)\,\xi^\alpha + \sum_\alpha \sum_{|\beta|\ge 1}{\sf S}_{\alpha\beta}(x,\xi)\,{\tt S}_\alpha(x,\xi)\;.
$$
If now $h(x,\xi)\in\mathcal{J}^k(U)$ ($k\ge 0$), i.e., if $|\alpha|\ge k$, then the four terms of the right hand side of the preceding equation belong to $\mathcal{J}^k(U)$, $\mathcal{J}^{k+1}(U)$, $\mathcal{J}^{k+2}(U)$, and $\mathcal{J}^{k+3}(U)$, respectively. This implies that
$$
(\phi^*\circ\psi^*)(h(x,\xi)) = h(x,\xi) + \mathfrak{S}(h(x,\xi))\;,
$$
with $\mathfrak{S}(h(x,\xi))\in\mathcal{J}^{k+1}(U)$. Hence, we have a degree 0 linear map (not a $\Z_2^n$-morphism)
$$
\mathfrak{S}:\mathcal{O}(U)\to\mathcal{J}(U)\subset\cO(U)\;,
$$
such that $\mathfrak{S}:\mathcal{J}^k(U)\to\mathcal{J}^{k+1}(U)$, for all $k\ge 1$. Define now
$$
\iota^*=\sum_{k=0}^\infty(-1)^k\mathfrak{S}^k=\id - \mathfrak{S}+\mathfrak{S}^2-\ldots
$$
For any $F\in\mathcal{O}(U)$, the successive terms $F$, $\mathfrak{S}F$, $\mathfrak{S}^2F$, ... of  $\iota^*(F)$ are elements of $\cO(U)$, $\mathcal{J}(U)$, $\mathcal{J}^2(U)$, ... This means that they are formal power series with at least 0, 1, 2, ... parameters. It follows that the series of series $\iota^*(F)$ is itself a formal power series in the $\xi$-s with coefficients in the smooth functions of the base $U$. Indeed, whatever the monomial $\xi^\alpha$, the coefficients of the terms of $\mathfrak{S}^\ell F$ with $\ell\ge |\alpha|+1$ cannot contribute to the coefficient of $\xi^\alpha$, which is thus a {\it finite} sum of base functions. Therefore $\iota^*$ is a map
$$
\iota^*:\cO(U)\to\cO(U)\;.
$$
Further, it is clear that $(\phi^*\circ\psi^*)\circ\iota^*=\iota^*\circ(\phi^*\circ\psi^*)=\id$, so that $\iota^*$ is a $\Zn$-graded unital algebra endomorphism of $\cO(U)$. Such an algebra morphism defines a unique $\Zn$-morphism $\iota:U^{p|\mathbf{q}}\to U^{p|\mathbf{q}}$ \cite{Bruce:2019}. Since $\phi^*\circ(\psi^*\circ\iota^*)=\id$ the $\Zn$-morphism $\lambda:=\iota\circ\psi$ is a left inverse of the $\Zn$-morphism $\phi$: $\lambda\circ\phi=\id$. In view of Proposition \ref{prop:tgtcomp}, we find that $d\lambda_{|\phi|(m)}\circ d\phi_m=\id$, so that $d\lambda_{|\phi|(m)}$ is invertible. Applying the result we just proved to $\lambda$, we find a local left inverse $\varphi$ of $\lambda$: $\varphi\circ\lambda=\id$, hence, $\varphi=\phi$ and $\phi$ is a local $\Zn$-diffeomorphism.
\end{proof}

\subsection{Implicit function theorem} \label{ssec:ImFcTh}

As a corollary of the inverse function theorem on $\Zn$-manifolds, one can prove the \emph{implicit function theorem}.

\begin{cor}[Implicit Function Theorem]
Let $U:=U^{p|\mathbf{q}},V:=V^{r|\mathbf{s}}$ and $W:=W^{r|\mathbf{s}}$ be $\Zn$-domains with coordinates $$(U,u=(x,\xi_{\mu})),\, (V,v=(y,\eta_{\mu}))\;\text{and}\; (W,w=(z,\theta_{\mu}))\;.$$ Let $$\phi: U\times V \to W$$ be a $\Zn$-morphism and let $u_0\in |U|, v_0\in |V|$ and $w_0:=|\phi|(u_0,v_0)\in|W|$. Assume the graded submatrix $$J:=\partial_v \phi^*(w)|_{(u_0,v_0)}$$ of $\op{Jac}_{\phi}|_{(u_0,v_0)}$ is invertible. Then, it exists a unique $\Zn$-morphism $$\psi: U\times W \to V $$ defined in a neighborhood of $(u_0,w_0)$ (we shrink the original domains if needed), such that \be\label{ImplFuncBasis}|\psi|(u_0,w_0)=v_0\ee and
\be\label{ImplFuncEquation} \phi\circ \langle \pi_1^{U\times W},\psi\rangle = \pi_2^{U\times W}\;.\ee
\end{cor}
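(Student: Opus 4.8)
The plan is to deduce the implicit function theorem from the inverse function theorem (Theorem \ref{InvFunThm}) by the standard trick of enlarging the morphism $\phi$ to a morphism between spaces of equal dimension. First I would define the auxiliary $\Zn$-morphism
\[
\Phi := \langle \pi_1^{U\times V}, \phi\rangle : U\times V \longrightarrow U\times W\;,
\]
so that on pullbacks $\Phi^*(x) = x$, $\Phi^*(\xi_\mu)=\xi_\mu$ (the $U$-coordinates), and $\Phi^*(z,\theta_\mu)=\phi^*(z,\theta_\mu)$. Its base map sends $(u_0,v_0)$ to $(u_0,w_0)$. The key computation is that the $\Zn$-graded Jacobian $\op{Jac}_\Phi|_{(u_0,v_0)}$ is block lower-triangular, with the identity block corresponding to the $U$-variables in the top-left corner and the submatrix $J = \partial_v\phi^*(w)|_{(u_0,v_0)}$ in the bottom-right corner; since $J$ is invertible by hypothesis, $\op{Jac}_\Phi|_{(u_0,v_0)} = J_{\Phi,(u_0,v_0)}$ (this is Remark \ref{rem:tgmap-Jac}) is invertible, hence $d\Phi_{(u_0,v_0)}$ is invertible.

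By Theorem \ref{InvFunThm}, $\Phi$ restricts to a $\Zn$-diffeomorphism $\Phi: U'\times V' \to \Omega$ for suitable neighborhoods, with inverse $\Phi^{-1}$. Because $\Phi$ is the identity in the $U$-slot, so is $\Phi^{-1}$; that is, writing $\Phi^{-1} = \langle \alpha, \beta\rangle$ with $\alpha: \Omega \to U$ and $\beta:\Omega\to V$, one checks from $\Phi\circ\Phi^{-1}=\id$ and $\pi_1^{U\times W}\circ\Phi = \pi_1^{U\times V}$ that $\alpha = \pi_1^{U\times W}|_\Omega$. After shrinking so that $U'\times W'\subset\Omega$ for a neighborhood $W'$ of $w_0$, I then define
\[
\psi := \beta \circ \langle \pi_1^{U\times W}, \pi_2^{U\times W}\rangle\big|_{U'\times W'} = \beta|_{U'\times W'} : U'\times W'\longrightarrow V\;.
\]
Equation \eqref{ImplFuncBasis} follows since $|\Phi^{-1}|(u_0,w_0) = (u_0,v_0)$ forces $|\psi|(u_0,w_0)=v_0$. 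For \eqref{ImplFuncEquation}, compute $\phi\circ\langle\pi_1^{U\times W},\psi\rangle = \phi\circ\langle\alpha,\beta\rangle|_{U'\times W'} = \phi\circ\Phi^{-1}$; and since $\pi_2^{U\times W}\circ\Phi = \phi$ (by definition of $\Phi$), post-composing with $\Phi^{-1}$ gives $\phi\circ\Phi^{-1} = \pi_2^{U\times W}$, as required.

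For uniqueness, suppose $\psi'$ also satisfies $|\psi'|(u_0,w_0)=v_0$ and $\phi\circ\langle\pi_1^{U\times W},\psi'\rangle = \pi_2^{U\times W}$. Then $\langle\pi_1^{U\times W},\psi'\rangle$ is a right inverse of $\Phi$ near $(u_0,w_0)$, because $\pi_1^{U\times W}\circ\Phi = \pi_1^{U\times V}$ gives the first component and the assumed relation gives the second; since $\Phi$ is a diffeomorphism on the relevant neighborhood its right inverse is unique and equals $\Phi^{-1}$, whence $\langle\pi_1^{U\times W},\psi'\rangle = \langle\pi_1^{U\times W},\psi\rangle$ and so $\psi' = \psi = \pi_2^{U\times V}\circ\Phi^{-1}$ there. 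The base-point condition \eqref{ImplFuncBasis} is what pins down the correct connected neighborhood.

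I expect the main obstacle to be purely bookkeeping rather than conceptual: verifying carefully that $\Phi$ genuinely acts as the identity on the $U$-factor at the level of pullbacks (so that its Jacobian is block-triangular with an identity block), and tracking the shrinking of domains so that all composites are defined on a common neighborhood of $(u_0,w_0)$ and $\Phi^{-1}$ lands inside $U'\times V'$. One subtlety worth spelling out is that the category $\Zn{\tt Man}$ has finite products (Proposition of Section \ref{sec:product}), so the morphisms $\langle\,\cdot\,,\cdot\,\rangle$ and the projections $\pi_i^{U\times W}$ all make sense and satisfy the expected universal-property identities; these identities (e.g. $\pi_i\circ\langle f_1,f_2\rangle = f_i$) are what make the chase go through.
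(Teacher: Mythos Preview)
Your proof is correct and follows essentially the same approach as the paper: the auxiliary morphism you call $\Phi$ is exactly the paper's $\chi=\langle\pi_1^{U\times V},\phi\rangle$, the invertibility of its differential is argued the same way (identity block on the $U$-slot, $J$ on the $V$-slot), and your $\psi=\beta=\pi_2^{U\times V}\circ\Phi^{-1}$ is the paper's definition verbatim. The only minor difference is in the uniqueness step: the paper verifies that any competitor $\omega$ has $\langle\pi_1^{U\times W},\omega\rangle$ invertible by computing $\partial_w\omega^*(v)|_{(u_0,w_0)}=J^{-1}$ via the chain rule (Proposition~\ref{JacobianComposition}), whereas you observe more directly that $\langle\pi_1^{U\times W},\psi'\rangle$ is a right inverse of the already-known diffeomorphism $\Phi$ and hence equals $\Phi^{-1}$---a slightly cleaner shortcut to the same conclusion.
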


Obviously $\pi_1^{U\times W}$ and $\pi_2^{U\times W}$ refer to the first and second projections of the product $U\times W$. The morphism $\langle\pi_1^{U\times W},\psi\rangle$ is the unique $\Zn$-morphism from $U\times W$ to $U\times V$ that is defined by $\pi_1^{U\times W}$ and by $\psi.$ In the ungraded context, Equation \eqref{ImplFuncEquation} reads $\phi(u,\psi(u,w))=w$. If we fix $w$ setting $w=\phi(u_0,v_0)$, we get $$\phi(u,v)=\phi(u_0,v_0)\Leftrightarrow v=\psi(u)\;.$$

\begin{proof}
	Set $\chi := \langle\pi_1^{U\times V},\phi\rangle$. This $\Zn$-morphism $\chi: U\times V \to U\times W$ is defined by
	\beas
	\chi^*(u)=u & \mbox{ and }& \chi^*(w)=\phi^*(w) \;.
	\eeas
By direct computation, we have that $\op{Jac}_{\chi}|_{(u_0,v_0)}$ is a block-diagonal matrix (see \eqref{tgmapmtrx}) with blocks
\beas
B_0= \left(\begin{array}{cc}
					\I & 0 \\
					\partial_x\phi^*(z)|_{(u_0,v_0)} & \partial_y\phi^*(z) |_{(u_0,v_0)}
			\end{array}\right)
& \mbox{ and }&
B_{\mu}= \left(\begin{array}{cc}
					\I & 0 \\
					 \partial_{\xi_{\mu}}\phi^*(\theta_{\mu})|_{(u_0,v_0)} & \partial_{\eta_{\mu}}\phi^*(\theta_{\mu}) |_{(u_0,v_0)}
			\end{array}\right)\;.
\eeas
Note that the block-diagonal matrix formed by the blocks
$\partial_y\phi^*(z) |_{(u_0,v_0)}$ and $\partial_{\eta_{\mu}}\phi^*(\theta_{\mu})|_{(u_0,v_0)}$ ($\mu\in \Zn\setminus\{0\}$) is nothing but the graded matrix $J$, which is by assumption invertible.
Thus, $\op{Jac}_{\chi}$ is invertible at $(u_0,v_0)$ and by Theorem \ref{InvFunThm} this implies that $\chi$ admits an inverse defined on a neighbourhood of $(u_0,w_0)$,
$$ \chi^{-1}: U\times W \to U\times V \;.$$

The $\Zn$-morphism $\psi:U\times W \to V$ defined by $\psi:= \pi_2^{U\times V}\circ\chi^{-1}$, has the properties described in the claim. Indeed, on the one hand, we have $|\psi|(u_0,w_0)=v_0$. On the other hand, the morphism $\langle\pi_1^{U\times W},\psi\rangle$ is characterized by $$\pi_1^{U\times V}\circ\langle\pi_1^{U\times W},\psi\rangle=\pi_1^{U\times W}\quad\text{and}\quad \pi_2^{U\times V}\circ\langle\pi_1^{U\times W},\psi\rangle=\psi\;.$$ Since, by definition of $\chi$, $$\pi_1^{U\times W}=\pi_1^{U\times V}\circ\chi^{-1}\quad\text{and}\quad\pi_2^{U\times W}=\phi\circ\chi^{-1}\;,$$ and, by definition of $\psi$, $$\psi= \pi_2^{U\times V}\circ\chi^{-1}\;,$$ we get
$$ \langle\pi_1^{U\times W},\psi\rangle = \chi^{-1}\;,$$ and $$\phi\circ\langle\pi_1^{U\times W},\psi\rangle=\pi_2^{U\times W}\;.$$

It remains to prove uniqueness of $\psi$. Assume there is a morphism $\omega:U\times W\to V$ that has the properties \eqref{ImplFuncBasis} and \eqref{ImplFuncEquation}. If we prove that $\p_w\omega^*(v)|_{(u_0,w_0)}$ is invertible, then the argument used above for $\phi$ implies that $\langle \pi_1^{U\times W},\omega\rangle$ is invertible, so that its inverse satisfies the characterizing properties of $\chi$: $$\langle \pi_1^{U\times W},\omega\rangle = \langle \pi_1^{U\times V},\phi\rangle^{-1}\;.$$ Further, by definition of $\langle \pi_1^{U\times W},\omega\rangle$, we have necessarily: $$\omega=\pi_2^{U\times V}\circ\langle \pi_1^{U\times V},\phi\rangle^{-1}\;.$$

Invertibility of $\p_w\omega^*(v)|_{(u_0,w_0)}$ is a consequence of \eqref{ImplFuncEquation} and the facts $$\left(\pi_2^{U\times W}\right)^*\!(w)=w,\; \langle \pi_1^{U\times W},\omega\rangle^*(u)=u\;\,\text{and}\;\,\langle \pi_1^{U\times W},\omega\rangle^*(v)=\omega^*(v)\;.$$ Indeed, using Equation \eqref{ImplFuncEquation} and Proposition \ref{JacobianComposition}, we get $$\left(0\;\,\I\right)=\left(\p_u\phi^*(w)\;\,\p_v\phi^*(w)\right)|_{(u_0,v_0)}\cdot\left(
                                                                                                                       \begin{array}{cc}
                                                                                                                         \I & 0 \\
                                                                                                                         \p_u\omega^*(v) & \p_w\omega^*(v) \\
                                                                                                                       \end{array}                                                                                                              \right)|_{_{(u_0,w_0)}}\;.$$
It follows that $$\p_w\omega^*(v)|_{(u_0,w_0)}=\p_v\phi^*(w)|^{-1}_{(u_0,v_0)}\;.$$
\end{proof}


\subsection{Immersions and submersions}\label{ImmSub}
\begin{defn}
A $\Zn$-morphism $\phi:M\to N$ is an \emph{immersion} (resp., a \emph{submersion}) at a point $m\in |M|$, if its tangent map at this point $d\phi_m$ is injective (resp., surjective).
\end{defn}
Since this condition is a local property, we can replace $\phi$ in the tangent map by its restriction $\phi: U \to V$, where $U=U^{p|\mathbf{q}}$ and $V=V^{r|\mathbf{s}}$ are $\Zn$-domains with coordinate systems $u=(x,\xi_{\mu})$ and $v=(y,\theta_{\mu})$, respectively, and such that $m\in |U|$ and $|\phi|(m)\in |V|$.

The differential $d\phi_m$ is a linear map between $\Zn$-graded vector spaces, which is represented by a block-diagonal matrix $(B_{\gamma})_{\gamma}$ (see \eqref{tgmapmtrx}) in $\gl^0(r|\mathbf{s}\times p|\mathbf{q},\R)$. Hence, it can be seen as a collection of $2^n$ classical linear maps between real vector spaces represented by the classical real matrices $B_{\gamma}$, $\gamma \in \Zn$. Thus, $d\phi_m$ is injective (resp., surjective) if and only if each of these classical linear maps is injective (resp., surjective), i.e., the rank of each $B_{\gamma}$ is equal to the number of its columns (resp., rows).
It is then legitimate to define the {\it rank} of $d\phi_m$ as the collection of the ranks of its diagonal blocks, so that we can rephrase the above as:
\begin{itemize}
\item $\phi$ is an immersion at $m$ if and only if $\op{rank}(d\phi_m)=p|\mathbf{q}$\;,
\item $\phi$ is a submersion at $m$ if and only if $\op{rank}(d\phi_m)=r|\mathbf{s}$\;.
\end{itemize}

We have moreover the following results:
\begin{prop}\label{prop:sub-im}
Let $\phi:M\to N$ be a $\Zn$-morphism, $M$ of dimension $p|\mathbf{q}$ and $N$ of dimension $r|\mathbf{s}$, $m$ a point of $M$.
	\begin{enumerate}
			\item\label{immersion} Let $p\leq r$ and $q_j\leq s_j$, for all  $j$. The map $\phi$ is an immersion at $m$ if and only if there exists $\Zn$-charts $(U,(x,\xi_{\mu}))$ around $m$ and $(V,(y,\theta_{\mu}))$ around $|\phi|(m)\in |N|$, such that $\phi|_U:U\to V$ has the form
			\beas
			\phi^*_V(y^i)=\begin{cases}
														x^i &\mbox{ if } 1\leq i\leq p \\
														0 &\mbox{ if } p+1\leq i\leq r
										\end{cases}
			& \mbox{ and } &
			\phi^*_V(\theta_{\mu}^{a})=\begin{cases}
																					\xi_{\mu}^{a} &\mbox{ if } 1\leq a\leq q_{\mu} \\
																					0 &\mbox{ if } q_{\mu}+1\leq a\leq s_{\mu}
																				\end{cases}\;,
			\eeas
			for all $\mu\in\Zn\setminus\{0\}$. Here we denote by $q_{\mu}$ the entry of $\mathbf{q}$ corresponding to the degree $\mu$, and analogously for $s_{\mu}$.
			
			\item\label{submersion} Let $p\geq r$ and $q_j\geq s_j$, for all $j$. The map $\phi$ is a submersion at $m$ if and only if it exists $\Zn$-charts $(U,(x,\xi_{\mu}))$ around $m$ and $(V,(y,\theta_{\mu}))$ around $|\phi|(m)\in |N|$, such that $\phi|_U:U\to V$ has the form
			\bea\label{submdef}
			\phi^*_V(y^i)= x^i \,,\quad 1\leq i\leq r
			& \mbox{ and } &
			\phi^*_V(\theta_{\mu}^{a})= \xi^{a}_{\mu}\;, \quad 1\leq a\leq s_{\mu}\;,
			\eea
			for all $\mu\in\Zn\setminus\{0\}$.
	\end{enumerate}
\end{prop}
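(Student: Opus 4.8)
The plan is to prove both statements in parallel, each time establishing the easy ``if'' direction first and then the substantial ``only if'' direction using the inverse function theorem (Theorem \ref{InvFunThm}) together with the product structure of Section \ref{sec:product}. For the ``if'' directions: once $\phi|_U$ has the stated normal form, its $\Zn$-graded Jacobian at $m$ is, block by block, either the inclusion matrix $\left(\begin{smallmatrix}\I\\0\end{smallmatrix}\right)$ (immersion case) or the projection matrix $\left(\I\;\,0\right)$ (submersion case), since the extra signs in \eqref{GradModJac} are trivial on these particular entries; hence each $B_\gamma$ of $d\phi_m$ has full column rank $p|\mathbf{q}$ (resp.\ full row rank $r|\mathbf{s}$), and $\phi$ is an immersion (resp.\ submersion) at $m$ by the rank criterion recalled just above the proposition.

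For the ``only if'' direction of \eqref{immersion}, I would proceed as in the classical proof. Starting from $\Zn$-domains $U=U^{p|\mathbf{q}}$ and $V=V^{r|\mathbf{s}}$ with $m=0$, $|\phi|(0)=0$, the injectivity of each block $B_\gamma$ lets me reorder the target coordinates $v=(y,\theta_\mu)$ so that the relevant square submatrices $\partial_x\phi^*(y^1,\dots,y^p)|_0$ and $\partial_{\xi_\mu}\phi^*(\theta_\mu^1,\dots,\theta_\mu^{q_\mu})|_0$ are invertible. Then I build an auxiliary morphism: identify $V$ (after shrinking) with a product $V'\times V''$, where $V'$ carries coordinates $(y^1,\dots,y^p;\theta_\mu^1,\dots,\theta_\mu^{q_\mu})$ and $V''$ carries the remaining ones, and define $\Phi:U\times V''\to V$ by $\Phi^*(y^i)=\phi^*(y^i)$ for $i\le p$, $\Phi^*(y^i)=\phi^*(y^i)+ w^i$ for $i>p$ (adding the $V''$-coordinate $w^i$), and similarly $\Phi^*(\theta_\mu^a)=\phi^*(\theta_\mu^a)$ for $a\le q_\mu$, $\Phi^*(\theta_\mu^a)=\phi^*(\theta_\mu^a)+ w_\mu^a$ for $a>q_\mu$. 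A direct computation of $\op{Jac}_\Phi|_0$ shows it is block-triangular with invertible diagonal blocks, hence invertible, so by Theorem \ref{InvFunThm} $\Phi$ is a local $\Zn$-diffeomorphism near $0$. Its inverse $\Psi=\Phi^{-1}$ furnishes new coordinates on $V$ in which $\phi$, precomposed with the inclusion $U\hookrightarrow U\times V''$ sending $u\mapsto(u,0)$, takes exactly the stated linear normal form; I then rename the new $V$-coordinates back to $(y,\theta_\mu)$.

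For the ``only if'' direction of \eqref{submersion}, surjectivity of each block $B_\gamma$ lets me reorder the \emph{source} coordinates $u=(x,\xi_\mu)$ so that $\partial_{(x^1,\dots,x^r)}\phi^*(z)|_0$ and $\partial_{(\xi_\mu^1,\dots,\xi_\mu^{s_\mu})}\phi^*(\theta_\mu)|_0$ are invertible. Here I define an auxiliary morphism $\Phi:U\to V\times U''$ on $U$ itself, where $U''$ has coordinates $(x^{r+1},\dots,x^p;\xi_\mu^{s_\mu+1},\dots,\xi_\mu^{q_\mu})$, by $\Phi^*(y^i)=\phi^*(y^i)$, $\Phi^*(\theta_\mu^a)=\phi^*(\theta_\mu^a)$, and $\Phi^*$ of each $U''$-coordinate equal to the corresponding $u$-coordinate. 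Again $\op{Jac}_\Phi|_0$ is block-triangular with invertible diagonal blocks, so $\Phi$ is a local $\Zn$-diffeomorphism by Theorem \ref{InvFunThm}; composing $\phi$ with $\Phi^{-1}$ and the projection $V\times U''\to V$, and using $\Phi^{-1}$ as new coordinates on $U$, puts $\phi$ into the form \eqref{submdef}.

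The main obstacle I anticipate is bookkeeping rather than conceptual: I must verify that all constructions respect the $\Zn$-grading — the auxiliary coordinates $w^i$, $w_\mu^a$ and the $U''$-coordinates must be declared of the same degree $\mu$ as the coordinates they supplement, so that $\Phi^*$ is genuinely a graded algebra morphism and the product $\Zn$-supermanifolds $U\times V''$, $V\times U''$ make sense with the intended dimensions. Equally, one must check that the various rectangular submatrices of $\op{Jac}_\phi|_m$ that appear really are the blocks $B_\gamma$ of the tangent map $J_{\phi,m}$ in the sense of Remark \ref{rem:tgmap-Jac} (the extra signs in \eqref{GradModJac} and the block-diagonal reduction under $\varepsilon$ are exactly what is needed), and that reordering coordinates within a fixed degree is itself a $\Zn$-diffeomorphism. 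Once the grading is tracked carefully, the invertibility of $\op{Jac}_\Phi|_0$ follows from its block-triangular shape, and the rest is the classical argument transported verbatim through Theorem \ref{InvFunThm} and Proposition \ref{JacobianComposition}.
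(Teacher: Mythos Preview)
Your proposal is correct and follows essentially the same approach as the paper: in each direction you build an auxiliary $\Zn$-morphism $\Phi$ (to or from a product) whose Jacobian at $m$ is block-triangular with invertible diagonal blocks, apply Theorem~\ref{InvFunThm}, and use $\Phi$ (or $\Phi^{-1}$) as a new chart. The paper spells out only the submersion case~\eqref{submersion}, constructing $\psi:U\to V\times\R^{p-r|\mathbf{q}-\mathbf{s}}$ exactly as your $\Phi:U\to V\times U''$, and declares the immersion case analogous; your explicit immersion construction (adding the $V''$-coordinate $w^i$ to $\phi^*(y^i)$ for $i>p$) is the expected dual and works for the same reason.
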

In other words, $\phi$ is an immersion (resp., a submersion) at $m$ if and only if in a neighborhood of $m$ and $|\phi|(m)$ there exist coordinates in which $\phi$ is the canonical linear injection (resp., linear projection).

\begin{proof}
The proofs of the two claims \eqref{immersion} and \eqref{submersion} are analogous. For the sake of completeness, we will describe here the proof for the submersion property \eqref{submersion}.

First, given a morphism $\phi$, such that $\phi|_U:U\to V$ is defined by \eqref{submdef}, one directly verifies that $d\phi_m$ is of the form
$$
d\phi_m\simeq \left(\begin{array}{cc|cc|cc|cc}
								\mathbb{I}& 0 & & & & & &\\
								\hline
								& &\mathbb{I}&0  & & & &\\
								\hline
								& & & & \ddots & & &\\
								\hline
								& & & & & &\mathbb{I}& 0
						\end{array}\right)
						\in \gl^0(r|\mathbf{s}\times p|\mathbf{q},\R)\;,
$$
which is of rank $r|\mathbf{s}$. Thus, $\phi$ is a submersion at $m$.

For the converse, let us consider $\Zn$-charts $(V,(y,\theta_\mu))$ of $N$ around $|\phi|(m)$ and $(U,(x,\xi_{\mu}))$ of $M$ around $m$ (with $|U|\subset|\phi|^{-1}(|V|)$). In the following we consider $\phi|_U:U\to V$. Up to reordering of the coordinates, we can assume that the sub-blocks
\begin{align} \label{hyp1}
 &\left(\partial_{x^j} \phi^*(y^i)\Big|_{m}\right)_{i,j=1, \ldots,r} \mbox{ of the diagonal block $B_0$ } \\
\mbox{and }\label{hyp2} &\left(\partial_{\xi_{\mu}^b} \phi^*(\theta_{\mu}^a)\Big|_{m}\right)_{a,b=1, \ldots,s_{\mu}} \mbox{ of the diagonal blocks $B_{\mu}$}\;,
\end{align}
are invertible matrices.
Let us consider the $\Zn$-superdomain $\R^{p-r|\mathbf{q}-\mathbf{s}}$ and relabel its coordinates as
$(y^{r+i},$ $\theta_{\mu}^{s_{\mu}+a_{\mu}})$, $1\leq i \leq p-r$ and $1\leq a_{\mu}\leq q_{\mu}-s_{\mu}$. Thanks to the fundamental theorem of $\Zn$-morphisms \cite[Theorem 6.8]{Covolo:2016}, we can construct a morphism $\psi:U\to V\times \R^{p-r|\mathbf{q}-\mathbf{s}}$ by setting
$$
\psi^*(y^i)= \begin{cases}
								\phi^*(y^i) & \mbox{ if } 1\leq i\leq r\\
								x^i & \mbox{ if } r+1\leq i\leq p
						\end{cases}\;,
$$
and, for all $\mu\in\Zn\setminus\{0\}$,
$$
\psi^*(\theta_{\mu}^{a})= \begin{cases}
																\phi^*(\theta_{\mu}^{a}) & \mbox{ if } 1\leq a\leq s_{\mu}\\
																\xi_{\mu}^{a} & \mbox{ if } s_{\mu}+1\leq a\leq q_{\mu}
															\end{cases}
\;.$$
Note that, by construction, we have $\phi|_U =\pi_1^{V\times\R^{p-r|\mathbf{q}-\mathbf{s}}}\circ \psi$.
By hypothesis \eqref{hyp1}, \eqref{hyp2}, $d\psi_{m}$ is invertible and so by Theorem \ref{InvFunThm} $\psi$ is a local diffeomorphism. Hence, there are subdomains $U'\subset U$ around $m$, $V'\subset V$ around $|\phi|(m)$, and $W'\subset\R^{p-r}$, such that $\psi:U'\to V'\times W'$ is a diffeomorphism. Considering the new coordinates on $U'$ given by $\widetilde{x}^i:=\psi^*(y^i)$ and $\widetilde{\xi}_{\mu}^a:=\psi^*(\theta_{\mu}^a)$, the claim \eqref{submersion} or \eqref{submdef} follows.
\end{proof}


\subsection{Constant rank theorem}

Let $\phi:M\to M'$ be a $\Z_2^n$-morphism of $\Zn$-manifolds of dimension $p|\vect{q}$ and $p'|\vect{q'}$, respectively. Let $m\in|M|$, let $V$ be a $\Z_2^n$-chart of $M'$ around $|\phi|(m)\in|M'|$, and let $U$ be a $\Z_2^n$-chart of $M$ around $m$, such that $|U|\subset |\phi|^{-1}(|V|)\subset |M|$. We can restrict $\phi$ to a $\Z_2^n$-morphism $\phi:U\to V$. Let $r \in \N$ and $\vect{s}=(s_{\mu})_{\mu \in \Zn\setminus\{0\}}$, with $r\leq \min(p,p')$ and $s_{\mu}\leq \min(q_{\mu},q'_{\mu})$, for all $\mu\in \Zn\setminus\{0\}$.

\begin{defn}\label{Def:cstrank}
The (graded) Jacobian matrix $\op{Jac}_{\phi}\in \gl^0(p'|\vect{q'} \times p|\vect{q}\,,\cO(U))$ is said to be of {\it constant rank} $r|{\bf s}$, if there exist $G_1 \in \GL(p'|{\bf q'}, \cO(U))$ and $G_2 \in \GL(p|{\bf q}\,,\cO(U))$, such that

\be\label{cstrank}
G_1 \op{Jac}_{\phi} G_2 =
\left(\begin{array}{c|c|c|c}
\begin{array}{cc}
\mathbb{I}_r & 0\\
0 & 0
\end{array}
									&    &  &    \\
\hline
 &  \begin{array}{cc}
		\mathbb{I}_{s_1} & 0\\
			0 & 0
		\end{array}  						&    &  \\
\hline
 &  & \ddots  &  \\
\hline
  &   &  & \begin{array}{cc}
\mathbb{I}_{s_N} & 0\\
0 & 0
\end{array} \\
\end{array}\right).
\ee
\end{defn}

Notice that we defined the concept of constant rank for the Jacobian matrix of a $\Z_2^n$-morphism $\phi$ written in $\Z_2^n$-charts $(U,\psi_1)$ and $(V,\psi'_1)$ around $m$ and $|\phi|(m)$, respectively. The coordinate $\Z_2^n$-diffeomorphism $\psi_1:U\to \mathcal{U}$ (resp., $\psi'_1:V\to\mathcal{V}$) allowed us to identify the open $\Z_2^n$-submanifold $U$ of $M$ (resp., $V$ of $M'$) with the $\Z_2^n$-domain $\mathcal{U}$ (resp., $\mathcal{V}$), so that we got $\psi_1\simeq \id$ (resp., $\psi'_1\simeq \id$). In other words, we actually defined the notion of constant rank for the Jacobian matrix of $$\phi_1:=\psi'_1\phi\,\psi_1^{-1}\;.$$

\begin{rema}\label{CstRankIndCoord}
If we choose new coordinate diffeomorphisms $\psi_2:U\to \mathcal{U}$ and $\psi_2':V\to\mathcal{V}$ (we could also consider $\psi'_1:V_1\to\mathcal{V}_1$ and $\psi'_2:V_2\to \mathcal{V}_2$, and similarly for $\psi_1$ and $\psi_2$), the Jacobian matrix of $\phi_2:=\psi'_2\phi\,\psi_2^{-1}$ has constant rank $r|\mathbf{s}$ if and only if this holds for the Jacobian matrix of $\phi_1$.
\end{rema}

Indeed, assume for instance that Equation \eqref{cstrank} is satisfied for $\op{Jac}_{\phi_2}$. Since $\psi:=\psi_1\psi_2^{-1}:\mathcal{U}\to\mathcal{U}$ (resp., $\psi'^{-1}:=\psi_2'\psi_1'^{-1}:\mathcal{V}\to\mathcal{V}$) is a $\Z_2^n$-diffeomorphism, its tangent map is invertible in $\mathcal{U}$ (resp., in $\mathcal{V}$), i.e., $$\varepsilon(\op{Jac}_{\psi})\quad (\text{resp.,}\quad\varepsilon(\op{Jac}_{\psi'^{-1}}))$$ is invertible over $\Ci(\mathcal{U})$ (resp., over $\Ci(\mathcal{V})$). The proof of Proposition \ref{invertibleblockdiag}, see Appendix, implies that $$\mathcal{G}_2:=\op{Jac}_{\psi}\in\GL(p|\mathbf{q},\mathcal{O}(\mathcal{U}))\quad (\text{resp.,}\quad\mathcal{G}_1:=\op{Jac}_{\psi'^{-1}}\in\GL(p'|\mathbf{q}',\mathcal{O}(\mathcal{V})))\;.$$ The existence of $\mathcal{G}_1^{-1}$ such that $\mathcal{G}_1^{-1}\mathcal{G}_1=\mathcal{G}_1\mathcal{G}_1^{-1}=\mathbb{I}$, implies that $$(\phi_1\psi)^*(\mathcal{G}_1)\in\GL(p'|\mathbf{q}',\mathcal{O}(\mathcal{U}))\;.$$ Using $\psi'^{-1}\phi_1\psi=\phi_2$, we get $$(\psi^{-1})^*\big(G_1\,(\phi_1\psi)^*(\mathcal{G}_1)\big)\;\op{Jac}_{\phi_1}\;\,(\psi^{-1})^*\big(\mathcal{G}_2\,G_2\big)=$$ $$(\psi^{-1})^*\big(G_1\,(\phi_1\psi)^*(\mathcal{G}_1)\;\psi^*(\op{Jac}_{\phi_1})\;\,\mathcal{G}_2\,G_2\big)=$$ $$(\psi^{-1})^*\big(G_1\op{Jac}_{\phi_2}G_2\big)\;,$$ so that $\op{Jac}_{\phi_1}$ has constant rank $r|\mathbf{s}$.

\begin{rema}\label{CstRankDifferential}
When evaluating Equality \eqref{cstrank} at a point $m$, this equality remains valid (see Appendix, Proposition \ref{invertibleblockdiag}). Hence, if $\op{Jac}_{\phi}$ is of \emph{constant rank} $r|\vect{s}$ (in the sense of Definition \ref{Def:cstrank}), then $d\phi_m\simeq\op{Jac}_{\phi}|_m\in\gl^0(p'|\mathbf{q}'\times p|\mathbf{q},\R)$ is a block-diagonal matrix of \emph{rank} $r|\mathbf{s}$ (in the sense of Subsection \ref{ImmSub}). As in the super case, the converse is not true.
\end{rema}

\begin{thm}[Constant rank theorem for $\Zn$-supermanifolds]\label{thm:cstrk}
Let $\phi:M\to M'$ be a morphism of $\Zn$-manifolds, of respective dimensions $p|\mathbf{q}$ and $p'|\mathbf{q'}$, and let $m$ be a point in $|M|$. Then the following are equivalent:
\begin{enumerate}
	\item In a neighborhood of $m$, the Jacobian matrix $\op{Jac}_{\phi}$ is a graded matrix of constant rank $r|\mathbf{s}$.
	\item In a neighborhood $U$ of $m$, $\phi$ may be written as the composite of a submersion $\phi_1:U\to W$ at $m$ and an immersion $\phi_2:W\to V$ at $|\phi_1|(m)$.
\end{enumerate}
\end{thm}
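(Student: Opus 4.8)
The plan is to prove the two directions separately, leaning on the trilogy of local theorems already established. For the implication $(2)\Rightarrow(1)$, suppose $\phi=\phi_2\circ\phi_1$ near $m$ with $\phi_1$ a submersion at $m$ and $\phi_2$ an immersion at $|\phi_1|(m)$. By Proposition \ref{JacobianComposition}, $\op{Jac}_\phi=\phi_1^*(\op{Jac}_{\phi_2})\cdot\op{Jac}_{\phi_1}$. By Proposition \ref{prop:sub-im}, after a change of coordinates $\phi_1$ is the canonical linear projection (so $\op{Jac}_{\phi_1}$ is, in suitable charts, the block-diagonal matrix whose blocks are $(\mathbb{I}_{r}\ 0)$-type, giving $W$ dimension $r|\mathbf{s}$), and $\phi_2$ is the canonical linear injection (so $\op{Jac}_{\phi_2}$ is block-diagonal with blocks of the form $\left(\begin{smallmatrix}\mathbb{I}\\ 0\end{smallmatrix}\right)$). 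The product of these two, read in the charts provided by Proposition \ref{prop:sub-im}, is exactly the normal form \eqref{cstrank} with identity blocks $\mathbb{I}_r,\mathbb{I}_{s_\mu}$; the coordinate changes that bring $\phi_1,\phi_2$ into canonical form furnish the invertible $G_1,G_2$ after pulling back, and Remark \ref{CstRankIndCoord} shows this is independent of chart choices. So $\op{Jac}_\phi$ has constant rank $r|\mathbf{s}$ in a neighborhood of $m$.

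The substantive direction is $(1)\Rightarrow(2)$. Assume $\op{Jac}_\phi$ has constant rank $r|\mathbf{s}$ near $m$; we may work in charts $(U,(x,\xi_\mu))$ around $m$ and $(V,(y,\eta_\mu))$ around $|\phi|(m)$, with $m=0$. First I would use the invertible matrices $G_1,G_2$ from Definition \ref{Def:cstrank}: multiplying $\op{Jac}_\phi$ on the left and right by these corresponds (by the argument in Remark \ref{CstRankIndCoord}, which identifies such matrices with Jacobians of coordinate $\Zn$-diffeomorphisms via Proposition \ref{invertibleblockdiag} in the Appendix) to composing $\phi$ with diffeomorphisms of $U$ and of $V$. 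Hence we may assume from the start that $\op{Jac}_\phi$ is \emph{equal} to the normal form \eqref{cstrank}. In each degree $\gamma$, the corresponding block of $\op{Jac}_\phi$ is $\left(\begin{smallmatrix}\mathbb{I}\ 0\\ 0\ 0\end{smallmatrix}\right)$, which already strongly constrains the derivatives $\partial_{u^a}v^b$: for the first $r$ (resp.\ $s_\mu$) coordinates in each degree, $\phi^*(v^b)$ has $x^b$ (resp.\ $\xi_\mu^b$) as its linear part and no dependence on the remaining variables to first order, and the last $p'-r$ (resp.\ $q'_\mu-s_\mu$) coordinates $\phi^*(v^b)$ have vanishing first-order part.

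Next I would split the target coordinates $v=(v',v'')$, where $v'$ collects the first $r|\mathbf{s}$ coordinates and $v''$ the rest, and correspondingly write $\phi^*(v')$ and $\phi^*(v'')$. The idea, exactly as in the classical and super cases, is: the map $\phi_1:U\to W$ defined by $\phi_1^*(v'^b):=\phi^*(v'^b)$ (together with choosing $W$ to have dimension $r|\mathbf{s}$) is a submersion at $m$ by construction of the first blocks; and to get the immersion, one shows that $\phi^*(v'')$ can be expressed through $\phi^*(v')$, i.e.\ $\phi^*(v''^b)=h^b(\phi^*(v'))$ for suitable functions $h^b$ with vanishing first-order part, so that $\phi_2:W\to V$ defined by $\phi_2^*(v'^a):=v'^a$, $\phi_2^*(v''^b):=h^b(v')$ is an immersion at $|\phi_1|(m)$ and $\phi=\phi_2\circ\phi_1$. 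The hard part will be establishing this functional dependence $\phi^*(v'')=h(\phi^*(v'))$. Concretely, one uses the implicit function theorem (Corollary, Implicit Function Theorem): since the $r|\mathbf{s}$ functions $\phi^*(v'^a)$ have invertible Jacobian submatrix with respect to $r|\mathbf{s}$ of the source coordinates (after a permutation of the $u$'s), one can, after a further coordinate change on $U$, assume $\phi^*(v'^a)=u^a$ for $a$ in a set of size $r|\mathbf{s}$, i.e.\ $\phi_1$ becomes literally the canonical projection. Then the constant-rank hypothesis forces all remaining derivatives $\partial_{u^c}\phi^*(v''^b)$ to vanish identically on $U$ (not just at $m$) for $c$ outside that set --- this is where constancy of the rank, as opposed to mere rank at $m$, is essential --- and hence, by $\cJ$-adic continuity of derivations (Proposition \ref{dercont}) and polynomial approximation as in the proof of Proposition \ref{tangentbasis}, $\phi^*(v''^b)$ depends only on $u^1,\dots,u^{r|\mathbf{s}}$, i.e.\ is pulled back from $W$. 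Setting $h^b$ to be that function of $v'$ and $\phi_2^*(v''^b):=h^b(v')$, $\phi_2^*(v'^a):=v'^a$ gives the desired factorization; checking $d(\phi_2)$ is injective at $|\phi_1|(m)$ and $d(\phi_1)$ surjective at $m$ is then immediate from the coordinate forms via \eqref{tgmapmtrx}. Throughout, the main technical subtlety --- beyond the bookkeeping of the $2^n$ degree blocks --- is that one must repeatedly pass between identically-vanishing derivatives and actual independence of coordinates, which in the $\Zn$-setting (non-nilpotent formal parameters) requires the $\cJ$-adic completeness and continuity machinery rather than a naive nilpotency argument.
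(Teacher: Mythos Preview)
Your overall strategy for $(1)\Rightarrow(2)$ --- define $\phi_1$ by the first $r|\mathbf{s}$ target coordinates, straighten it to a projection, then argue that $\phi^*(v'')$ factors through $W$ --- is the same as the paper's. But two of your steps do not go through as written.

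First, the reduction ``we may assume $\op{Jac}_\phi$ is \emph{equal} to the normal form \eqref{cstrank}'' is not valid. The matrices $G_1,G_2$ in Definition~\ref{Def:cstrank} are arbitrary elements of $\GL(p'|\mathbf{q}',\cO(U))$ and $\GL(p|\mathbf{q},\cO(U))$; they need not be Jacobians of any $\Zn$-diffeomorphism (a Jacobian must satisfy the symmetry conditions $\partial_{u^j}(\op{Jac})_{ik}=\pm\,\partial_{u^k}(\op{Jac})_{ij}$, which a generic invertible matrix over $\cO(U)$ does not). Moreover $G_1$ has entries in $\cO(U)$, whereas a coordinate change on the target would contribute $\phi^*(\op{Jac}_{\psi'})$ with $\op{Jac}_{\psi'}$ valued in $\cO(V)$; there is no reason $G_1$ lies in the image of $\phi^*$. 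Remark~\ref{CstRankIndCoord} only says the constant-rank property is chart-independent; it does not identify $G_1,G_2$ with coordinate changes.

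Second, and more seriously, the assertion ``the constant-rank hypothesis forces all remaining derivatives $\partial_{u^c}\phi^*(v''^b)$ to vanish identically on $U$'' is precisely the crux of the proof, and you do not indicate how to extract it from the existence of $G_1,G_2$. After straightening $\phi_1$, the Jacobian $\op{Jac}_\phi$ has the block form with $(\mathbb{I}\; 0)$ in the $'$-rows and unknown entries $(\star\;\bullet)$ in the $''$-rows; you must show that all the $\bullet$-subblocks vanish. The paper does this by a careful matrix argument: writing $G_1^{-1}$ and $G_2$ in the refined $('\,,\,'')$-block form, one first observes that the $''$-columns of $\op{Jac}_\phi\, G_2=G_1^{-1}(G_1\op{Jac}_\phi G_2)$ vanish, then reads off from the $'\,''$-subblocks that the $B$-subblocks of $G_2$ vanish, hence (by Proposition~\ref{invertibleblockdiag}) its $D$-subblocks assemble into an invertible matrix, and finally the $''\,''$-subblocks of $\op{Jac}_\phi\,G_2$ give $(\bullet)\cdot D=0$, whence $\bullet=0$. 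Only then can one conclude $\partial_{\widetilde u''}\phi^*(\widetilde v)=0$ and invoke the comparison lemma (Lemma~\ref{lem:eqf}) to get $\phi=\phi_2\circ\phi_1$. Note also that the paper defines $\phi_2$ concretely as $\phi\circ\psi$ with $\psi:W\hookrightarrow U$ the canonical section, rather than by first proving the existence of the functions $h^b$; this sidesteps having to produce $h^b$ before the vanishing is established.
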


\begin{rema}\label{CstRankCoordExpr}
In view of Proposition \ref{prop:sub-im}, (2) implies that there exist coordinate charts $(U,(x,\xi_{\mu}))$ near $m$ and $(V,(y,\eta_{\mu}))$ near $|\phi|(m)$, in which the morphism $\phi$ writes as
\beas
			\phi^*(y^i)=\begin{cases}
														x^i\,, &\mbox{ if } 1\leq i\leq r \\
														0\,, &\mbox{ if } r+1\leq i\leq p'
									\end{cases}
			& \mbox{ and } &
			\phi^*(\eta_{\mu}^{a})=\begin{cases}
																					\xi_{\mu}^{a}\,, &\mbox{ if } 1\leq a\leq s_{\mu} \\
																					0\,, &\mbox{ if } s_{\mu}+1\leq a\leq q'_{\mu}
														  \end{cases}\;.
\eeas
\end{rema}

\medskip
\begin{proof}[Proof of Theorem \ref{thm:cstrk}]
That (2) implies (1) clearly follows from Remark \ref{CstRankCoordExpr}. The converse is more involved: first one needs to construct a submersion $\phi_1$ and an immersion $\phi_2$, and then prove equality between their composite $\Phi:=\phi_2\circ \phi_1$ and the original morphism $\phi$.

Since the statement is local, we may restrict ourselves as before to consider a morphism $\phi:U\to V$ of $\Zn$-domains with coordinates $$u=(x,\xi_{\mu})\quad\text{and}\quad v=(y,\eta_{\mu})\;.$$ We assume that $m\in|U|$ has coordinates $m\simeq x=0$ (centered $\Z_2^n$-chart). In the following, we often distinguish between the first $(r,s_\mu)$ coordinates or components, and the remaining ones. We then write $$(u',u'')=(x',x'',\xi'_\mu,\xi''_\mu)\quad\text{and}\quad (v',v'')=(y',y'',\eta'_\mu,\eta''_\mu)\;.$$ We use similar notation for base morphisms, for instance for $|\phi|:|U|\subset\R^p\to |V|\subset\R^{p'}$, and write $$|\phi|=(|\phi|',|\phi|'')\;,$$ where $|\phi|'$ (resp., $|\phi|''$) are the $r$ first (the $p'-r$ remaining) $\R$-valued component-functions of $|\phi|$.

Let us assume that $\op{Jac}_{\phi}$ has constant rank $r|\mathbf{s}$ near the point $m$. Thus, the block-diagonal matrix $d\phi_m=\op{Jac}_{\phi}|_m$ is of rank $r|\mathbf{s}$, see Remark \ref{CstRankDifferential}. Up to reordering of the coordinates, we may assume that the subblocks of $d\phi_m$ that are invertible are
\bea\label{diff}
\partial_{x'} \phi^*(y')|_m
& \mbox{ and }&
\partial_{\xi'_\mu} \phi^*(\eta'_\mu)|_m\;,
\eea
for all $\mu\in \Zn\setminus\{0\}$.

We denote by $(z,\zeta_{\mu})$ the coordinates of the $\Zn$-domain $\R^{r|\mathbf{s}}$ and denote by $W$ the open $\Z_2^n$-subdomain of $\R^{r|\mathbf{s}}$ with base $|W|=\{y'\,|\;\exists y'':(y',y'')\in |V|\}$. Let $$\phi_1:U\to W$$ be the morphism of $\Zn$-domains defined by
\bea\label{oldform}
\phi_1^*(z)= \phi^*(y')\in \big(\cO^0(|U|)\big)^{\times r}& \mbox{ and } & \phi_1^*(\zeta_{\mu})= \phi^*(\eta'_{\mu})\in\big(\cO^\mu(|U|)\big)^{\times s_\mu}\;,
\eea
for all $\mu\in\Zn\setminus\{0\}$. We have $$|\phi_1|=\varepsilon(\phi_1^*(z))=\varepsilon(\phi^*(y'))=|\phi|':|U|\to |W|\;.$$ By \eqref{diff}, the differential of $\phi_1$ at $m$ has rank $r|\mathbf{s}$, and so $\phi_1$ is a submersion at this point. By the proof of the second item of Proposition \ref{prop:sub-im}, shrinking the domain $U$ near $m$ if necessary, we can define new coordinates $(\widetilde{x},\widetilde{\xi}_{\mu})$ (changing the first $r|\mathbf{s}$ coordinates, i.e., $\widetilde{x}''=x''$ and $\widetilde{\xi}_\mu''=\xi_\mu''$) such that the morphism $\phi_1$  writes as
\bea\label{newform}
\phi_1^*(z)=\widetilde{x}' &\mbox{  and  } & \phi_1^*(\zeta_{\mu})=\widetilde{\xi}_{\mu}'\;.
\eea

Let us now consider the morphism of $\Zn$-domains $$\psi:W\to U$$ defined by
\bea\label{PsiWU}
\begin{cases}
							\psi^*(\widetilde{x}') = z\\
							\psi^*(\widetilde{x}'') = 0
						\end{cases}
\mbox{ and } \;\;\;\;
\begin{cases}
                            \psi^*(\widetilde{\xi}_{\mu}')= \zeta_{\mu}\\
							\psi^*(\widetilde{\xi}_{\mu}'')= 0
						\end{cases}.
\eea
Let us denote by $U'$ the $\Z_2^n$-subdomain of $U$ defined by the equations
$$
\widetilde{x}''=0\quad\text{and}\quad\widetilde{\xi}_\mu''=0\;\, (\text{considering all nonzero degrees}\;\mu)\;,
$$
see Remark \ref{rem:subdomain}. Notice that the coordinates $(\widetilde{x}',\widetilde{x}'',\widetilde{\xi}'_\mu,\widetilde{\xi}''_\mu)$ of $m$ are $(\widetilde{x}',0,0,0)$, so that, with some minor abuse of notation, we have $m\in |U'|$. The morphisms $\phi_1$ and $\psi$ actually implement a $\Z_2^n$-diffeomorphism between $U'$ and $W$. Indeed, the $\Z_2^n$-morphism $\widetilde\phi_1:U'\to W$, defined by $$
\widetilde\phi_1^*(z):=\phi_1^*(z)=\widetilde{x}'\quad\text{and}\quad\widetilde\phi_1^*(\zeta_\mu):=\phi_1^*(\zeta_\mu)=\widetilde{\xi}_\mu'\;,
$$
and the $\Z_2^n$-morphism $\widetilde{\psi}:W\to U'$, defined by
\be\label{psi*identif}
\widetilde{\psi}^*(\widetilde{x}'):=\psi^*(\widetilde{x}')=z\quad\text{and}\quad\widetilde{\psi}^*(\widetilde{\xi}_\mu'):=\psi^*(\widetilde{\xi}_\mu')=\zeta_\mu\;,
\ee
are inverses. In the following, we use the identification
\be\label{IdentU'W}
U'\simeq W\;.
\ee

The composite $\Z_2^n$-morphism $$\phi_2:=\phi\circ \psi: W\to V$$ pulls the first coordinates $(y',\eta'_\mu)$ back to
\be\label{newform2}
\phi_2^*(y')= z\quad\text{and}\quad\phi_2^*(\eta_{\mu}') = \zeta_{\mu}\;.
\ee
One checks, by direct computation of the differential of $\phi_2$ at $|\phi_1|(m)$, that $\phi_2$ is an immersion at this point. By the first item of Proposition \ref{prop:sub-im}, shrinking $V$ near $|\phi_2|(|\phi_1|(m))$ if necessary, we can find new coordinates $(\widetilde{y},\widetilde{\eta}_{\mu})$ (changing the last $p'-r|\mathbf{q'}-\mathbf{s}$ coordinates, i.e., $\widetilde{y}'=y'$ and $\widetilde{\eta}'_\mu=\eta'_\mu$) such that the morphism $\phi_2$ writes as
\be\label{newform3}
\phi_2^*(\widetilde{y}')= z\,,\;\phi_2^*(\widetilde{y}'')=0\,,\;\phi_2^*(\widetilde{\eta}_{\mu}')=\zeta_\mu\,,\;\text{ and }\;\phi_2^*(\widetilde{\eta}_{\mu}'')=0\;.
\ee

It remains now to show equality between the original morphism $\phi$ and the composite $\Phi:=\phi_2\circ \phi_1$ in a neighborhood of $m$. By the fundamental theorem of $\Zn$-morphisms \cite{Covolo:2016}[Theorem 6.8], it suffices to prove the equality around $m$ of the $\Zn$-functions $\phi^*(\widetilde v)$ and $\Phi^*(\widetilde v)$, for all coordinates $\widetilde v$ on $V$.

On the one hand, it follows from Equations \eqref{newform3} and \eqref{oldform} that
$$
\Phi^*(\widetilde y')=\phi_1^*(\phi_2^*(\widetilde y'))= \phi^*(\widetilde y')\;.
$$
On the other hand, since
$$
\phi^*(\widetilde y'')=\sum_{\alpha,\beta}\; f_{\alpha,\beta}(\widetilde{x}',\widetilde{x}'')\;\widetilde{\xi}'^\alpha\widetilde{\xi}''^\beta\;,
$$
we get, using \eqref{newform3}, \eqref{IdentU'W}, \eqref{psi*identif}, and \eqref{PsiWU},
$$
\Phi^*(\widetilde y'')|_{U'} = \phi_1^*(\phi_2^*(\widetilde{y}''))|_{U'} = 0 = \phi_2^*(\widetilde{y}'') = \psi^*(\phi^*(\widetilde{y}'')) = \sum_{\alpha}\, f_{\alpha,0}(\widetilde x',0)\;\widetilde \xi'^{\alpha}=\phi^*(\widetilde y'')|_{U'}\;.
$$
The same argument holds for the nonzero degree coordinates $\widetilde\eta'_{\nu}, \widetilde\eta''_{\nu}$:
\bea \label{=onU'}
\Phi^*(\widetilde{y})|_{U'}=\phi^*(\widetilde{y})|_{U'} & \mbox{ and } &  \Phi^*(\widetilde{\eta}_{\nu})|_{U'}=\phi^*(\widetilde{\eta}_{\nu})|_{U'}\;.
\eea
We still need to ``extend'' these equalities to $U$. This extension will be based on Lemma \ref{lem:eqf}. It thus suffices to prove that the conditions \eqref{diff0} are satisfied for each pair $(\Phi^*(\widetilde{y}^i), \phi^*(\widetilde{y}^i))$ and for each pair $(\Phi^*(\widetilde{\eta}_\nu^a), \phi^*(\widetilde{\eta}_\nu^a))$.

We first compute the derivatives of $\Phi^*(\widetilde{y}), \Phi^*(\widetilde{\eta}_\nu)\in\cO(U)$ with respect to $\widetilde{x}''$ and $\widetilde{\xi}_\mu''\,$. Setting $\widetilde{u}''=(\widetilde{x}'',\widetilde{\xi}_\mu'')$ and $w=(z,\zeta)$, and recalling that $\widetilde{v}=(\widetilde{y},\widetilde{\eta}_\nu)$, we obtain, with a slight abuse of notation,

$$
(D):=\partial_{\widetilde{u}''}\Phi^*(\widetilde{v})=\partial_{\widetilde{u}''}\phi_1^*(\phi_2^*(\widetilde{v}))=\sum_k\; \partial_{\widetilde{u}''}\phi_1^*(w^k)\; \phi_1^*\big(\partial_{w^k}\phi_2^*(\widetilde{v})\big)=:\sum_k\; \partial_{\widetilde{u}''}(1)\; \phi_1^*\big(\partial_{w^k} (2)\big)\;.
$$
If $\widetilde{v}=\widetilde{y}''$ or $\widetilde{v}=\widetilde{\eta}''$, then $(2)=0$, so $(D)=0$. If $\widetilde{v}=\widetilde{y}'$ (resp., $\widetilde{v}=\widetilde{\eta}'$), then $(2)=z$ (resp., $(2)=\zeta$), so $(D)=$ $\partial_{\widetilde{u}''}\widetilde{x}'=0$ (resp., $(D)=\partial_{\widetilde{u}''}\widetilde{\xi}'=0$). Hence, all the derivatives of interest vanish for $\Phi^*$.

As for $\phi^*$, let us recall that all pullbacks of target coordinates are formal power series in the source nonzero degree coordinates with smooth coefficients with respect to the source zero degree coordinates. In view of Equations \eqref{oldform} and \eqref{newform}, we have in particular
$$
\phi^*(\widetilde{y}')=\widetilde{x}'\quad\text{and}\quad\phi^*(\widetilde{\eta}_\mu')=\widetilde{\xi}'_\mu\;.
$$
The Jacobian matrix of $\phi$ thus reads
\be\label{JacFirst}
\quad\quad\quad\quad\;\op{Jac}_\phi=
\left(
\begin{array}{cc|cc|cc|cc}
\mathbb{I}_r & 0 & 0 & 0 & \ldots & &0 & 0 \\
\star_{11} & \bullet_{11} & \star_{12} & \bullet_{12} & \ldots & &\star_{12^n} &\bullet_{12^n} \\
\hline
0 & 0 &\mathbb{I}_{s_1} & 0 &\ldots & &0 & 0 \\
\star_{21} & \bullet_{21} & \star_{22} & \bullet_{22} & \ldots & &\star_{22^n} &\bullet_{22^n} \\
\hline
\vdots & \vdots & \vdots & \vdots &\ddots & & \vdots& \vdots \\
\hline
0 & 0 &0 & 0 &\ldots &  &\mathbb{I}_{s_{N}} & 0\\
\star_{2^n1} & \bullet_{2^n1} & \star_{2^n2} & \bullet_{2^n2} & \ldots & &\star_{2^n2^n} &\bullet_{2^n2^n} \\
\end{array}
\right)
\in \gl^0(p'|\mathbf{q'} \times p|\mathbf{q};\cO(U))\;\;.
\ee

\noindent By assumption $\op{Jac}_\phi$ is of constant rank $r|\vect{s}$ near $m$ -- whatever the coordinates considered, see Remark \ref{CstRankIndCoord}. Hence, there exist invertible degree-zero matrices $G_1$,$G_2$ of $\Zn$-functions, such that $G_1\op{Jac}_\phi G_2$ is of the form \eqref{cstrank}, i.e., of the form \eqref{JacFirst} with all $\star_{ij}$ and $\bullet_{ij}$ equal to zero. Recall that the matrices $G_1^{-1}$ and $G_2$ have the standard block-decomposition. Moreover, in view of the decomposition of coordinates into $'$ and $''$ coordinates, we can decompose each block-row (resp., block-column) into a $'$ and a $''$ row (resp., column). For instance, the $'\,''$ subblocks are the subblocks $B_{ij}$.
\be\label{StructureG1G2}
\left(
\begin{array}{cc|cc|cc|cc}
A_{11} & B_{11} & A_{12} & B_{12} & \ldots & &A_{12^n} & B_{12^n} \\
C_{11} & D_{11} & C_{12} & D_{12} & \ldots & &C_{12^n} &D_{12^n} \\
\hline
A_{21} & B_{21} &A_{22} & B_{22} &\ldots & &A_{22^n} & B_{22^n} \\
C_{21} & D_{21} & C_{22} & D_{22} & \ldots & &C_{22^n} &D_{22^n} \\
\hline
\vdots & \vdots & \vdots & \vdots &\ddots & & \vdots& \vdots \\
\hline
A_{2^n1} & B_{2^n1} &A_{2^n2} & B_{2^n2} &\ldots &  &A_{2^n2^n} & B_{2^n2^n}\\
C_{2^n1} & D_{2^n1} & C_{2^n2} & D_{2^n2} &\ldots  & &C_{2^n2^n} &D_{2^n2^n} \\
\end{array}
\right)\;.
\ee
In particular,
$$
\op{Jac}_\phi G_2=G_1^{-1}(G_1\op{Jac}_\phi G_2)
$$
has all its $''$ columns equal to $0$. To see this, compute $G_1^{-1}(G_1\op{Jac}_\phi G_2)$, i.e., apply $G_1^{-1}=\eqref{StructureG1G2}$ to $\eqref{JacFirst}$ with $\star=\bullet=0$, and compute more precisely any $''$ column. Now compute $\op{Jac}_\phi G_2$, i.e., multiply $\eqref{JacFirst}$ and $G_2=\eqref{StructureG1G2}$, and compute more precisely any $'\,''$ subblock -- which we know to be zero. It follows that in $G_2$ all the subblocks $B_{ij}$ vanish, so that we get from Proposition \ref{invertibleblockdiag} that all subblocks $D_{ii}$ are invertible. Compute finally in $\op{Jac}_\phi G_2$ any $''\,''$ subblock $k\ell$ -- which we know to be zero. The result is $\sum_{i=1}^{2^n} \bullet_{ki}\; D_{i\ell}=0$, for any $k,\ell$, i.e.,
$$
(\bullet_{k1},\bullet_{k2},\ldots,\bullet_{k2^n})\; D:=
(\bullet_{k1},\bullet_{k2},\ldots,\bullet_{k2^n})
\left(
\begin{array}{c|c|c|c}
D_{11} & D_{12} & \ldots &D_{12^n} \\
\hline
D_{21} & D_{22} & \ldots &D_{22^n} \\
\hline
\vdots & \vdots &  \ddots & \vdots \\
\hline
D_{2^n1} & D_{2^n2} &  \ldots &D_{2^n2^n} \\
\end{array}
\right)
=0\;,$$
for any $k$. Since $D$ is invertible, again due to Proposition \ref{invertibleblockdiag}, it follows that $\bullet_{k\ell}=0$, for all $k,\ell$, so that the $''$ columns of $\op{Jac}_\phi$ in \eqref{JacFirst} do all vanish. This means exactly that, for any $\widetilde{v}$ and any $\widetilde{u}''$, the derivative $\partial_{\widetilde{u}''}\phi^*(\widetilde v)=0$ and thus coincides with the corresponding derivative $\partial_{\widetilde{u}''}\Phi^*(\widetilde{v})$.

It follows that all the pullbacks $\phi^*(\widetilde{v})$ and $\Phi^*(\widetilde{v})$ coincide on some neighborhood of $m$ (see Lemma \ref{lem:eqf} below), so that $\phi=\phi_2\circ\phi_1$ on this neighborhood.
\end{proof}

\begin{lem}\label{lem:eqf}
Let $U$ be a $\Zn$-domain with $r+k|\mathbf{s}+\mathbf{l}$ coordinates $$u=(x,\xi)=(x',x'',\xi',\xi'')\;.$$ Consider the $\Zn$-subdomain $U'$ of $U$ defined by the equations
\bea \label{U'def}
x''=0  \;\mbox{ and  }\; \xi''=0\;.
\eea
If two $\Z_2^n$-functions $f_1,f_2$ on $U$ are such that
\be\label{eqonU'}
f_1|_{U'}=f_2|_{U'}
\ee
and
\be
\partial_{x''}f_1=\partial_{x''}f_2\quad\text{and}\quad \partial_{\xi''}f_1=\partial_{\xi''}f_2\label{diff0}\;,
\ee
then $f_1$ and $f_2$ coincide on some neighbourhood of $U'$.
\end{lem}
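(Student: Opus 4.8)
Set $g:=f_1-f_2$; by \eqref{eqonU'} and \eqref{diff0} this function satisfies $g|_{U'}=0$ and $\p_{x''}g=0$, $\p_{\xi''}g=0$ on all of $U$, and the statement reduces to showing that $g=0$ on some open $\Zn$-subdomain $\widetilde U\subset U$ whose base contains $|U'|$. Write $g$ as a formal power series $g=\sum_{\beta',\beta''}g_{\beta'\beta''}(x',x'')\,\xi'^{\beta'}\xi''^{\beta''}$ with smooth coefficients. The plan is to use $\p_{\xi''}g=0$ to remove all $\xi''$-dependence, then $\p_{x''}g=0$ to remove the $x''$-dependence on a suitable neighbourhood, and finally $g|_{U'}=0$ to kill the coefficients that remain.

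For the first step, fix a component $\xi''^{j}$ of $\xi''$. Comparing coefficients in $\p_{\xi''^{j}}g=0$ forces $\beta''_{j}\,g_{\beta'\beta''}=0$ for every $\beta''$ with $\beta''_{j}\ge 1$; since the ground field $\R$ has characteristic zero, this gives $g_{\beta'\beta''}=0$ whenever $\beta''_{j}\ge 1$. (For an \emph{odd} component of $\xi''$ only the power $\beta''_j=1$ occurs and the conclusion is immediate; it is for the \emph{even} nonzero-degree components, which are not nilpotent, that the characteristic-zero hypothesis is genuinely needed — a point where the $\Zn$-setting really differs from the super case.) Running over all components of $\xi''$ shows $g$ does not involve $\xi''$, so $g=\sum_{\beta'}g_{\beta'}(x',x'')\,\xi'^{\beta'}$.

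Next, $\p_{x''}g=0$ means $\p_{x''^{i}}g_{\beta'}=0$ on $|U|$ for all $i$ and all $\beta'$. Hence around any point of $|U'|$ there is a box $V=A\times B\subset|U|$ (with $A$ open in the $x'$-variables, $B$ a connected neighbourhood in the $x''$-variables) on which every $g_{\beta'}$ depends only on $x'$, so $g|_{V}=\sum_{\beta'}g_{\beta'}(x')\,\xi'^{\beta'}$. Restricting to $V\cap U'$ and using \eqref{eqonU'} yields $\sum_{\beta'}g_{\beta'}(x')\,\xi'^{\beta'}=0$ in $\Ci(A)[[\xi']]$, and by uniqueness of the power-series expansion all $g_{\beta'}$ vanish on $A$, i.e. $g|_{V}=0$. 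Taking $\widetilde U$ to be the union of these boxes over all points of $|U'|$ gives an open neighbourhood of $|U'|$ on which $g=0$, that is, $f_1=f_2$. The argument is essentially routine; the only point requiring care is that $\p_{x''}g=0$ yields $x''$-independence only \emph{locally}, which is precisely why the conclusion is (and can only be) equality on \emph{some} neighbourhood of $U'$, and why the boxes must be assembled by hand — but this presents no real difficulty.
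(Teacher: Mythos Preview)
Your proof is correct and follows essentially the same route as the paper: set $g=f_1-f_2$, expand as a formal power series, use $\partial_{\xi''}g=0$ to kill the $\xi''$-dependence via the coefficient argument, use $\partial_{x''}g=0$ to obtain local $x''$-independence of the remaining smooth coefficients, and then invoke $g|_{U'}=0$. Your added remarks on characteristic zero for the even nonzero-degree parameters and on assembling the neighbourhood from boxes are welcome clarifications but do not change the underlying argument.
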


\begin{rema}\label{rem:subdomain}
Let us be more precise regarding the $\Zn$-subdomain $U'$ of $U$ defined by \eqref{U'def}. Its base manifold is $$|U'|=\{x':(x',0'')\in|U|\}\subset\R^r\;,$$ whereas its structure sheaf is $$\cO_{|U'|}(-)=\Ci_{|U'|}(-)[[\xi']]\;\;.$$ There is a natural embedding $\rho: U'\to U$ given by
$$
\begin{cases}\rho^*(u') = u' \\ \rho^*(u'') = 0\end{cases}.
$$
Then, for any $f\in \cO(U)=\cO_{|U|}(|U|)$, we set $f|_{U'}:=\rho^*(f)\in\cO(U')=\cO_{|U'|}(|U'|)$. If $f=f(x',x'',\xi',\xi'')$, then $$f|_{U'}=f(x',0,\xi',0)\;.$$
\end{rema}

\begin{proof}[Proof of Lemma \ref{lem:eqf}]
The $\Z_2^n$-function $f=f_1-f_2\in \cO(U)$ reads
$$
f=f(x',x'',\xi',\xi'')=\sum_{\alpha, \beta} f_{\alpha,\beta}(x',x'')\xi'^{\alpha}\xi''^{\beta}\quad (f_{\alpha,\beta}\in\Ci(|U|))\;.
$$
By $\cJ(U)$-adic continuity of derivations, any partial derivative of the power series $f$ is given by deriving term by term. When taking into account the second equation of \eqref{diff0}, we get, for any $b\in\{1,\ldots,|\mathbf{l}|\}$,
$$
0=\partial_{\xi''^b}f=\sum_{\alpha,\beta}\pm\beta_b f_{\alpha,\beta}(x',x'') \xi'^{\alpha} \xi''^{\beta-e_b}\;,
$$
where $e_b$ is vector number $b$ of the canonical basis of $\R^{|\mathbf{l}|}$. This implies that
\be\label{ab1}
f_{\alpha,\beta} =0\,,\quad \mbox{for all } \alpha \mbox{ and for all } \beta\neq 0 \;.
\ee
When taking into account the first equation of \eqref{diff0}, we obtain
$$
0=\partial_{x''}f=\sum_{\alpha,\beta}\;\partial_{x''}\!f_{\alpha,\beta}\;\xi'^\alpha \xi''^\beta\;,
$$
i.e., we see that, in some neighborhood $|U'|_\varepsilon:=\{(x',x'')\in|U|:x'\in|U'|, x''\simeq 0\}$ of $|U'|$,
\be\label{ab2}
f_{\alpha,\beta}(x',x'')= f_{\alpha,\beta}(x',0),\quad\text{for all}\; \alpha,\beta\;.
\ee
Combining $\eqref{ab1}$ and \eqref{ab2}, we finally get
$$
f=\sum_\alpha f_{\alpha,0}(x',x'')\xi'^\alpha = \sum_\alpha f_{\alpha,0}(x',0)\xi'^\alpha = f(x',0,\xi',0) = f|_{U'}=0\;,
$$
in $|U'|_{\varepsilon}$, in view of \eqref{eqonU'}.
\end{proof}

\section{Appendix}

\subsection{Linear algebra over Hausdorff-complete $\Zn$-commutative rings.}

In this section, we will work exclusively with $\Zn$-commutative rings which are Hausdorff-complete in the $J$-adic topology, where $J$ denotes the (proper) homogeneous ideal of $R$ generated by the elements of nonzero degree $\gamma_i\in\Z_2^n$, $i\in\{1,\ldots,N\}$, $N=2^n-1$.

\subsubsection{Rank of a linear map.}

We begin with the following criterion for invertibility of a square degree zero matrix.

\begin{prop}\label{invertibleblockdiag}
Let $R$ be a $J$-adically Hausdorff-complete $\Zn$-commutative ring, and let $T\in\gl^0(p|\mathbf{q},R)$ be a degree zero square $p|{\bf q}$ matrix with entries in $R$, written in the standard block format

\begin{equation*}
T =  \left(\begin{array}{c|c|c}
T_{00} & \dotsc & T_{0N} \\
\hline
\vdots & \ddots & \vdots \\
\hline
T_{N0} & \dotsc & T_{NN} \\
\end{array}\right).
\end{equation*}

\noindent The matrix $T$ is invertible if and only if  $T_{ii}$ is invertible for all $i$.
\end{prop}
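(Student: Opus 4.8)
The plan is to isolate one genuinely non-formal fact — that a degree-zero matrix over $R$ congruent to the identity modulo $J$ is invertible — and then obtain both implications by bookkeeping with the block structure and the $\Zn$-grading. \emph{The key lemma} I would prove first: if $M\in\gl^0(p|\mathbf q,R)$ is such that every entry of $M-\I$ lies in $J$, then $M$ is invertible, with inverse $\sum_{k\ge 0}(-1)^k(M-\I)^k$. Since the entries of $(M-\I)^k$ lie in $J^k$ and $J^{k+1}\subset J^k$, the partial sums of this series form, entry by entry, a $J$-adic Cauchy sequence in $R$; by the proposition on convergence of $J$-adic Cauchy sequences in a Hausdorff complete ring they converge, giving a matrix $C$ over $R$. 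Because multiplication is continuous for the $J$-adic topology and $\bigcap_k J^k=0$, passing to the limit in $(\I+(M-\I))\,C_m=\I+(-1)^m(M-\I)^{m+1}$ gives $MC=\I$, and symmetrically $CM=\I$. A routine consequence is that if the reduction $\bar M\in\gl^0(p|\mathbf q,R/J)$ is invertible, then $M$ is invertible over $R$: lift an inverse of $\bar M$ entrywise to a matrix $S$ over $R$, so that $MS-\I$ has entries in $J$; then $MS$ is invertible by the above, giving $M$ a right inverse, and symmetrically a left inverse, hence a two-sided one.

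For the direction ``all $T_{ii}$ invertible $\Rightarrow$ $T$ invertible'', write $T=D+N$ with $D=\mathrm{diag}(T_{00},\dots,T_{NN})$ and $N$ the off-diagonal complement. Since $T$ has degree $0$, the block $T_{ij}$ has entries of degree $\gamma_i+\gamma_j$, which is nonzero for $i\ne j$; hence every entry of $N$, being homogeneous of nonzero degree, lies in $J$. The matrix $D$ is invertible, with inverse $\mathrm{diag}(T_{00}^{-1},\dots,T_{NN}^{-1})$, so every entry of $D^{-1}T-\I=D^{-1}N$ lies in $J$, and $D^{-1}T$ is invertible by the key lemma; hence $T$ is invertible. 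Conversely, if $T$ is invertible, reduction modulo $J$ annihilates the off-diagonal blocks, so $\bar T=\mathrm{diag}(\bar T_{00},\dots,\bar T_{NN})$ is invertible in $\gl^0(p|\mathbf q,R/J)$, whence each $\bar T_{ii}$ is invertible over $R/J$; applying the second part of the key lemma to $T_{ii}$ shows each $T_{ii}$ is invertible over $R$.

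The only step that is not pure formalism is the geometric-series argument in the key lemma. There the point demanding care is that both the convergence of $\sum(-1)^k(M-\I)^k$ and the verification that its limit is a genuine two-sided inverse must be deduced from the facts already at hand — completeness of $R$ (convergence of $J$-adic Cauchy sequences), Hausdorffness ($\bigcap_k J^k=0$, which also gives uniqueness of limits), and continuity of the ring operations in the $J$-adic topology — rather than re-established. Everything else is the observation that a degree-zero matrix has its off-diagonal blocks supported in $J$, together with standard block-diagonal manipulations.
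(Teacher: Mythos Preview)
Your proposal is correct and follows essentially the same route as the paper. The paper also reduces modulo $J$ (via a map $\tilde\varepsilon:\gl^0(p|\mathbf q,R)\to\operatorname{Diag}(p|\mathbf q,R/J)$), uses the same geometric-series argument to invert $\mathbb{I}+Z$ with $Z\in\gl^0(p|\mathbf q,J)$, and closes with the chain ``$T$ invertible $\Leftrightarrow$ $\tilde\varepsilon(T)$ invertible $\Leftrightarrow$ each $\tilde\varepsilon(T_{ii})$ invertible $\Leftrightarrow$ each $T_{ii}$ invertible''; your explicit decomposition $T=D+N$ and use of $D^{-1}$ as the lift of the inverse is just a concrete instantiation of the paper's lifting step.
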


In this work, we are of course mainly interested in the case $R=\cO(U)$ and $J=\cJ(U)$.

\newcommand{\ve}{\varepsilon}
\newcommand{\tve}{\tilde\varepsilon}

\begin{proof}

The $\Z_2^n$-graded ring morphism $\varepsilon: R \to \,R/J$ induces a map $$\tilde\varepsilon:\gl^0(p|\mathbf{q},R)\ni T\mapsto \tilde\varepsilon(T)\in\op{Diag}(p|\mathbf{q},R/J),$$ where $\tilde\varepsilon(T)$ is the block-diagonal matrix with diagonal blocks $\tilde\varepsilon(T_{ii})$ (with commuting entries). Notice that $\tilde\varepsilon(T)$ is invertible if and only if all $\tilde\varepsilon(T_{ii})$ are invertible. In this case, the inverse $(\tve(T))^{-1}$ is the block-diagonal matrix with diagonal blocks $(\tve(T_{ii}))^{-1}$.

Clearly, if $T$ is invertible over $R$, i.e., if there is a matrix $T^{-1}$ such that $TT^{-1}=T^{-1}T=\mathbb{I}$, then $\tilde\varepsilon(T)$ is invertible over $R/J$. More precisely, since $\varepsilon$ is a ring morphism, we have $$\tilde\varepsilon(T)\tilde\varepsilon(T^{-1})=\tilde\varepsilon(T^{-1})\tilde\varepsilon(T)=\tilde\varepsilon(\mathbb{I})\;,$$ so that $\tilde\varepsilon(T^{-1})=(\tilde\varepsilon(T))^{-1}$.

Conversely, assume that $\tilde\varepsilon(T)$ is invertible over $R/J$. Its inverse is a block-diagonal matrix with diagonal blocks made of elements in $R/J$, i.e., elements of the type $\ve(r)$ with $r$ in degree zero. Hence, there is a (block-diagonal) zero-degree matrix $Y$ over $R$, such that $\tve(Y)\tve(T)=\tve(T)\tve(Y)=\tve(\mathbb{I})$, or, equivalently, such that $YT=\mathbb{I}+Z_L$ and $TY=\mathbb{I}+Z_R$, with $Z_L,Z_R\in \mathrm{gl}^0(p|\mathbf{q},J)$. By Hausdorff-completeness, the matrix $\mathbb{I}+Z$, for $Z=Z_L$ or $Z=Z_R$, is invertible with inverse
$$ (\mathbb{I}+Z)^{-1}= \mathbb{I}+\sum_{k\geq 1} (-Z)^k\in\gl^0(p|\mathbf{q},R)\;. $$
It follows that $T_L^{-1}T:=(\mathbb{I}+Z_L)^{-1}YT=\mathbb{I}$, that $TT_R^{-1}:=TY(\mathbb{I}+Z_R)^{-1}=\mathbb{I}$, and that $T_L^{-1}=T_L^{-1}TT_R^{-1}=T_R^{-1}$, so that $T$ is invertible over $R$, with inverse $T_L^{-1}=T_R^{-1}$.

Finally $T$ is invertible if and only if $\tve(T)$ is invertible, if and only if $\tve(T_{ii})$ is invertible for all $i$, if and only if $T_{ii}$ is invertible for all $i$.
\end{proof}

\vspace{1cm}

\noindent Tiffany Covolo

\noindent National Research University Higher School of Economics, Moscow

\noindent covolotiffany@gmail.com\bigskip

\noindent Stephen Kwok

\noindent University of Luxembourg

\noindent sdkwok2@gmail.com\bigskip

\noindent Norbert Poncin

\noindent University of Luxembourg

\noindent norbert.poncin@uni.lu


\begin{thebibliography}{10}
\begin{small}


\bibitem{Aizawa:2017}
N.~Aizawa \& J.~Segar,
\newblock{$\Z_2 \times \Z_2$ generalizations of $\mathcal{N}=2$ super Schr\"{o}dinger algebras and their representations,}
\newblock{\emph{J. Math. Phys.} \textbf{58} (2017), no. 11, 113501, 14 pp. }

\bibitem{Albuquerque:1999}
 H.~Albuquerque \&  S.~Majid,
\newblock{Quasialgebra structure of the octonions,}
\newblock{ \emph{J. Alg.} \textbf{220} (1999), 188--224.}

\bibitem{Albuquerque:2002}
H.~Albuquerque \&  S.~Majid,
\newblock{Clifford algebras obtained by twisting of group algebras,}
\newblock{ \emph{J. Pure Appl. Alg.} \textbf{171} (2002), 133--148.}


\bibitem{Bartocci:1991}
C.~Bartocci, U.~Bruzzo \& D.~ Hern\'{a}ndez Ruip\'{e}rez,
\newblock{The geometry of supermanifolds,}
\newblock{Mathematics and its Applications, 71. \emph{Kluwer Academic Publishers Group, Dordrecht}, 1991. xx+242 pp.}

\bibitem{Bernstein:2013}
J.~Bernstein, D.~Leites, V.~Molotkov, \& V.~Shander,
\newblock{\emph{Seminars of Supersymmetries. Vol.1. Algebra and
calculus}, MCCME, Moscow, 2013 (In Russian, the English version is available for perusal).}


\bibitem{Bruce:2019a}
A.J.~ Bruce,
\newblock{On a $\Z_2^n$-graded version of supersymmetry,}
\newblock{\emph{Symmetry} \textbf{11}(1) (2019), 116. }

\bibitem{Bruce:2018a}
A.J.~Bruce \& E.~Ibarguengoytia,
\newblock{The Graded Differential Geometry of Mixed Symmetry Tensors,}
\newblock{\emph{Arch. Math. (Brno)} \textbf{55} (2019), no. 2, 123--137.}

\bibitem{Bruce:2019b}
A.J.~Bruce, E.~Ibarguengoytia  \&  N.~Poncin,
\newblock{The Schwarz-Voronov embedding of   $\Z_{2}^{n}$-manifolds,}
\newblock{\emph{SIGMA} \textbf{16} (2020), 002, 47 pages.}

\bibitem{Bruce:2018}
A.J.~Bruce \& N.~Poncin,
\newblock{Functional analytic issues in $\Z_2^n$-Geometry,}
\newblock{\emph{Revista de la UMA} \textbf{60} (2019), no. 2, 611--636.}

\bibitem{Bruce:2019}
A.~Bruce \& N.~Poncin,
\newblock{Products in the category of $\Z_2^n$-manifolds,}
\newblock{\emph{J. Nonlinear Math. Phys.} \textbf{26} (2019), no. 3, 420--453. }

\bibitem{LinManLinAct}
A.~Bruce, E.~Ibargu\"engoytia \& N.~Poncin,
\newblock{On the `Unbearable Heaviness' of $\Z_2^n$-Geometry,}
\newblock{to appear.}

\bibitem{Carmeli:2011}
C.~Carmeli, L.~Caston \&  R.~Fioresi,
\newblock{Mathematical foundations of supersymmetry,}
\newblock{EMS Series of Lectures in Mathematics, \emph{European Mathematical Society (EMS), Z\"{u}rich}, 2011.}

\bibitem{Carmeli:2018}
C.~Carmeli,  R.~Fioresi \& V.S.~Varadarajan,
\newblock{Super Bundles,}
\newblock{\emph{Universe} \textbf{4}(3) (2018), 46.}

\bibitem{Covolo:2016}
T.~Covolo, J.~ Grabowski \& N.~Poncin,
\newblock{The category of $\Z_{2}^{n}$-supermanifolds,}
\newblock{\emph{J. Math. Phys.} \textbf{57} (2016), no. 7, 073503, 16 pp. }

\bibitem{Covolo:2016a}
T.~Covolo, J.~ Grabowski \& N.~Poncin,
\newblock{Splitting theorem for $\Z_{2}^{n}$-supermanifolds,}
\newblock{\emph{J. Geom. Phys.} \textbf{110} (2016), 393--401. }


\bibitem{Covolo:2016c}
T.~Covolo, S.~ Kwok \& N.~Poncin,
\newblock{The Frobenius theorem for $\Z_{2}^{n}$-supermanifolds,}
\newblock{\texttt{ 	arXiv:1608.00961 [math.DG]}}

\bibitem{CM}
T.~Covolo \& J.-P.~Michel,
\newblock{Determinants over graded-commutative algebras: a categorical viewpoint,}
\newblock{\emph{L'Enseignement Math\'ematique} \textbf{62(3)} (2014).}

\bibitem{Covolo:2012}
T.~Covolo, V.~Ovsienko \& N.~ Poncin,
\newblock{Higher trace and Berezinian of matrices over a Clifford algebra,}
\newblock{\emph{J. Geom. Phys.} \textbf{62} (2012), no. 11, 2294--2319.}

\bibitem{Deligne:1999}
P.~Deligne \& J.W.~Morgan,
\newblock{Notes on supersymmetry (following Joseph Bernstein),} \newblock{in: Quantum fields and strings: a course for mathematicians, Vol. 1, 2 (Princeton, NJ, 1996/1997), 41--97, \emph{Amer. Math. Soc.}, Providence, RI, 1999.}

\bibitem{BPP:KTR}
G.~Di Brino, D.~Pistalo \& N.~Poncin,
\newblock{Koszul-Tate resolutions as cofibrant replacements of algebras over differential operators,}
\newblock{\emph{Journal of Homotopy and Related Structures} \textbf{13} (2018), no. 4, 793--846.}

\bibitem{BPP:HAC}
G.~Di Brino, D.~Pistalo \& N. Poncin,
\newblock{Homotopical algebraic context over differential operators,}
\newblock{\emph{Journal of Homotopy and Related Structures} \textbf{14} (2019), no. 1, 293--347.}

\bibitem{Druhl:1970}
K.~Dr\"{u}hl, R.~Haag  \& J.E.~Roberts,
\newblock{On parastatistics,}
\newblock{\emph{Comm. Math. Phys.} \textbf{18} (1970), 204--226.}

\bibitem{Green:1953}
H.S.~Green,
\newblock{A generalized method of field quantization,}
\newblock{\emph{Phys. Rev.} \textbf{90} (1953), 270.}

\bibitem{Greenberg:1965}
O.W.~Greenberg  \& A.M.L.~Messiah,
\newblock{Selection rules for parafields and the absence of paraparticles in nature,}
\newblock{\emph{Phys. Rev.} (2) \textbf{138} (1965).}


\bibitem{Hartshorne:1977}
R.~Hartshorne,
\newblock{Algebraic geometry,}
\newblock{Graduate Texts in Mathematics, No. 52. \emph{Springer-Verlag, New York-Heidelberg}, 1977. xvi+496 pp.}


\bibitem{Leites:1980}
D.A.~Leites,
\newblock{Introduction to the theory of supermanifolds,}
\newblock{\emph{Russ. Math. Surv.} \textbf{35}, No. 1,  (1980), 1--64.}


\bibitem{MacLane1998}
S. Mac~Lane,
\newblock{{\em Categories for the working mathematician}, volume~5 of {\em
  Graduate Texts in Mathematics},}
\newblock{Springer-Verlag, New York, second edition, 1998.}


\bibitem{MO1}
S.~Morier-Genoud \& V.~Ovsienko,
\newblock{Well, Papa, Can You Multiply Triplets?,}
\newblock{\emph{The~\mbox{Mathematical} Intelligencer} {\bf 31}(4) (2009), 1--2.}

\bibitem{MO2}
S.~Morier-Genoud \& V.~Ovsienko,
\newblock{Simple graded commutative algebras,}
\newblock{\emph{J. Alg.} {\bf 323}(6) (2010), 1649--1664.}

\bibitem{Poncin:2016}
N.~Poncin,
\newblock{Towards integration on colored supermanifolds,}
\newblock{\emph{Banach Center Publ.} \textbf{110} (2016), 201--217.}





\bibitem{Tennison:1975}
B.R.~Tennison,
\newblock{Sheaf theory,}
\newblock{London Mathematical Society Lecture Note Series, No. 20. \emph{Cambridge University Press, Cambridge, England-New York-Melbourne}, 1975. vii+164 pp.}

\bibitem{TVI}
B.~To\"en \& G.~Vezzosi,
Homotopical algebraic geometry, I, Topos theory,
\textit{Adv. Math.} \textbf{193} (2005), n. 2, 257--372.

\bibitem{TVII}
B.~To\"en \& G.~Vezzosi,
Homotopical algebraic geometry, II, Geometric stacks and applications,
\textit{Mem. Amer. Math. Soc.} \textbf{193} (2008), n. 902, pp. x+224.

\bibitem{Tolstoy:2013}
V.N.~Tolstoy,
\newblock{Super-de Sitter and alternative super-Poincar\'{e} symmetries,}
\newblock{in: Lie theory and its applications in physics, Selected papers based on the presentations at the 10th international workshop, LT 10, Varna, Bulgaria, June 17--23, 2013.}

\bibitem{Varadarajan:2004}
V.S.~Varadarajan,
\newblock{Supersymmetry for mathematicians: an introduction,}
\newblock{Courant Lecture Notes in Mathematics, 11. \emph{New York University, Courant Institute of Mathematical Sciences, New York}, American Mathematical Society, Providence, RI, 2004.}


\bibitem{Waelbroeck:1971}
L. Waelbroeck,
\newblock{{\em Topological Vector Spaces and Algebras}, \emph{Lecture Notes in Mathematics}, 230,}
\newblock{Springer-Verlag, New York, 1971.}


\bibitem{Yang:2001}
W.~Yang \& S.~Jing,
\newblock{A new kind of graded Lie algebra and parastatistical supersymmetry,}
\newblock{\emph{Sci. China Ser. A.} \textbf{44} (2001), no. 9, 1167--1173. }

\end{small}
\end{thebibliography}
\end{document}